\pdfoutput=1
\documentclass[a4paper,reqno]{amsart}

\usepackage[margin=3.5cm]{geometry}
\usepackage[T1]{fontenc}
\usepackage[utf8]{inputenc}
\usepackage[english]{babel}
\usepackage{amssymb}
\usepackage{mathtools}
\usepackage{amsthm}
\usepackage{enumitem}
\usepackage{caption}
\captionsetup{tableposition=top,figureposition=bottom,font=small, width=\textwidth}
\usepackage[dvipsnames]{xcolor}


\numberwithin{equation}{section}

\theoremstyle{definition}
\newtheorem{definizione}{Definition}[section]
\theoremstyle{plain}
\newtheorem{theorem}{Theorem}[section]

\newtheorem{lemma}[theorem]{Lemma}

\theoremstyle{definition}

\usepackage[numbers,sort&compress]{natbib}

\newcommand{\abs}[1]{{\left\vert #1 \right\vert}}
\newcommand{\norma}[1]{{\left\Vert #1 \right\Vert}}
\newcommand{\R}{\mathbb{R}}


\usepackage{hyperref}
\hypersetup{linktoc=none, bookmarksnumbered, colorlinks=true, linkcolor=magenta, citecolor=magenta}
\usepackage[nameinlink,noabbrev]{cleveref}
\crefformat{equation}{(#2#1#3)}
\crefrangeformat{equation}{(#3#1#4) to~(#5#2#6)}
\crefmultiformat{equation}{(#2#1#3)}{ and~(#2#1#3)}{, (#2#1#3)}{ and~(#2#1#3)}
\crefname{subsection}{section}{sections}
\Crefname{subsection}{Section}{Sections}
\renewcommand{\cref}[1]{\Cref{#1}}

\title{Finite Element Approximation of the Hardy constant}

\author[F. Della Pietra]{Francesco Della Pietra}
\address{
    Dipartimento di Matematica e Applicazioni ``R. Caccioppoli'', 
    Universit\`a degli studi di Napoli Federico II, 
    Via Cintia, Complesso Universitario Monte S. Angelo, 80126 Napoli, Italy.}
\email{f.dellapietra@unina.it}

\author[G. Fantuzzi]{Giovanni Fantuzzi}
\address{
    Friedrich-Alexander-Universität Erlangen-Nürnberg,
    Department of Mathematics, Chair for Dynamics, Control, Machine Learning and Numerics (Alexander von Humboldt Profes- sorship), 
    Cauerstr. 11, 91058 Erlangen, Germany.}
\email{giovanni.fantuzzi@fau.de}

\author[L. I. Ignat]{Liviu I. Ignat}
\address{
    Institute of Mathematics ``Simion Stoilow'' of the Romanian Academy, 21 Calea Grivitei Street, 010702 Bucharest, Romania.
    \newline\indent
    The Research Institute of the University of Bucharest - ICUB, University of Bucharest, 90-92 Sos. Panduri, 5th District, Bucharest, Romania
}
\email{liviu.ignat@gmail.com}

\author[A. L. Masiello]{Alba Lia Masiello}
\address{
    Dipartimento di Matematica e Applicazioni ``R. Caccioppoli'', 
    Universit\`a degli studi di Napoli Federico II, 
    Via Cintia, Complesso Universitario Monte S. Angelo, 80126 Napoli, Italy.
}
\email{albalia.masiello@unina.it}

\author[G. Paoli]{Gloria Paoli}
\address{
      Dipartimento di Matematica e Applicazioni ``R. Caccioppoli'', 
    Universit\`a degli studi di Napoli Federico II, 
    Via Cintia, Complesso Universitario Monte S. Angelo, 80126 Napoli, Italy}
\email{gloria.paoli@unina.it}

\author[E. Zuazua]{Enrique Zuazua}
\address{
    Friedrich-Alexander-Universität Erlangen-Nürnberg, 
    Department of Mathematics, Chair for Dynamics, Control, Machine Learning and Numerics (Alexander von Humboldt Professorship), 
    Cauerstr. 11, 91058 Erlangen, Germany.
    \newline\indent
    Chair of Computational Mathematics,
    Fundación Deusto, Avenida de las Universidades, 
    24, 48007 Bilbao, Basque Country, Spain.
    \newline\indent
    Universidad Autónoma de Madrid,
    Departamento de Matemáticas, 
    Ciudad Universitaria de Cantoblanco, 28049 Madrid, Spain.}
\email[Corresponding author]{enrique.zuazua@fau.de}

\date{}

\begin{document}
\maketitle

\begin{abstract} 
We consider finite element approximations to the optimal constant for the Hardy inequality with exponent $p=2$ in bounded domains of dimension $n=1$ or $n\geq 3$. For finite element spaces of piecewise linear and continuous functions on a mesh of size $h$, we prove that the approximate Hardy constant converges to the optimal Hardy constant at a rate proportional to $1/\abs{\log h}^2$. This result holds in dimension $n=1$, in any dimension $n\geq 3$ if the domain is the unit ball and the finite element discretization exploits the rotational symmetry of the problem, and in dimension $n=3$ for general finite element discretizations of the unit ball. In the first two cases, our estimates show excellent quantitative agreement with values of the discrete Hardy constant obtained computationally.

\medskip\medskip\noindent
\textsc{MSC 2020:}  46E35, 65N30

\smallskip\noindent
\textsc{Keywords:} Hardy inequality, Hardy constant, Finite Element Method
\end{abstract}

\section{Introduction}

In his celebrated work \cite{hardyvero}, G. H. Hardy proved that 
\begin{equation}\label{har}
  \left(\dfrac{p-1}{p}\right)^p\int_0^1 \dfrac{\abs{u}^p}{x^p} dx\le  \int_0^1 \abs{u'}^p \;dx
\end{equation}
for all $1<p<+\infty$ and all $u\in W^{1,p}(0,1)$ with $u(0)=0$. This inequality, nowadays called the \emph{Hardy inequality}, was extended in \cite{hardy} to open sets $\Omega \subseteq \R^n$ in $n\ge 2$ dimensions and $p \in (1,n)$, giving
\begin{equation}\label{hardy}
    \left(\frac{n-p}{p}\right)^p \int_{\Omega} \frac{\abs{u}^p}{\abs{x}^p} \;dx \le \int_{\Omega}\abs{\nabla u}^p \;dx
\end{equation}
for all $u\in W_0^{1,p}(\Omega)$. The inequality holds also when $\Omega=\R^n$ and it is trivial when $n=p$. 

The Hardy inequality has received considerable attention because it finds applications in several fields. For example, it is related to Heisenberg's \emph{uncertainty principle}~\cite{fefferman} and, for $p=2$, it is useful in describing properties of Schr\"odinger operators with inverse square potentials~\cite{frank}.
Further extensions of the inequality exist and the literature is broad. We refer readers to \cite{adimurthi,   cazacu, ddghardy, peral, zua_kre, prehistory,   kufnermal, kufnerpers,  soria} for a general overview.

It is well-known that the constants in \cref{har,hardy} are optimal, meaning that 
\begin{subequations}
    \begin{align}\label{e:hardy-ratio-def-1d}
     \left(\dfrac{p-1}{p}\right)^p 
     &=
     \inf_{\substack{u\in W^{1,p}(0,1)\\u(0)=0}} \; \frac{\displaystyle\int_0^1 \abs{u'}^p \;dx}{\displaystyle\int_0^1 x^{-p} \abs{u}^p \;dx}
     &&\text{if } n=1
    \intertext{and}
    \label{e:hardy-ratio-def-3d}
    \left(\dfrac{n-p}{p}\right)^p
    &=
     \inf_{u\in W^{1,p}_0(\Omega)}\frac{\displaystyle\int_\Omega \abs{\nabla u}^p \;dx}{\displaystyle\int_\Omega \abs{x}^{-p} \abs{u}^p \;dx}
     &&\text{if } n\geq 2.
\end{align}
\end{subequations}
These infima are not attained, but one can easily construct minimizing sequences. For example, one can approximate the function $u(x) = \abs{x}^{(p-n)/p}$ with functions in $W^{1,p}(\Omega)$ satisfying the correct boundary conditions.

In this work, we fix $p=2$ and consider the problem of approximating the optimal Hardy constant using the finite element method. 
Specifically, in dimension $n=1$, define the discrete Hardy constant as 
\begin{equation}\label{e:discrete-hardy-min}
    S_h=\min_{\substack{v\in V_h\\v(0)=0}} \; \frac{\displaystyle{\int_0^1 \vert v'\vert^2\,dx }}{\displaystyle{\int_0^1 x^{-2} \abs{v}^2\,dx}},
\end{equation}
where $V_h$ is the space of function in $H^{1}(0,1)$ that are piecewise linear on a `triangulation' of size $h$ (see \Cref{elementi} for a precise definition).
The approximation properties of $V_h$ in $H^1(0,1)$ guarantee that, as $h$ decreases, $S_h$ converges to the optimal value $1/4$ of the minimization problem in \cref{e:hardy-ratio-def-1d} for $p=2$. We prove that this convergence is logarithmic by establishing the following asymptotic expansion for $S_h$.

\begin{theorem}\label{main}
   For all sufficiently small triangulation size $h$,
    \begin{equation*}
        S_h =  \frac{1}{4} + \frac{\pi^2}{\abs{\log h}^2} + o\left(\dfrac{1}{\abs{\log h}^2} \right). 
    \end{equation*}
\end{theorem}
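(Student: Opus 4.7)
The plan is to reduce the discrete Hardy ratio on $(0,1)$ to a one-dimensional Sturm--Liouville problem on $(0,T)$ with $T:=|\log h|$ via a change of variables, and then match a lower bound coming from the continuous relaxation of that problem with an upper bound obtained from a carefully interpolated trial function. For any $u\in V_h$ with $u(0)=0$, the substitution $u(x)=x^{1/2}w(x)$ followed by $t=-\log x$, $W(t):=w(e^{-t})$ yields, after integrating the cross term $ww'$ by parts, the pair of identities
\begin{align*}
\int_0^1 (u')^2\,dx-\tfrac{1}{4}\int_0^1\tfrac{u^2}{x^2}\,dx&=\tfrac{1}{2}W(0)^2+\int_0^\infty (W')^2\,dt,\\
\int_0^1\tfrac{u^2}{x^2}\,dx&=\int_0^\infty W^2\,dt.
\end{align*}
Piecewise linearity with $u(0)=0$ forces $u(x)=cx$ on $[0,h]$, so $w(0)=0$ and $W(t)=ce^{-t/2}$ on $[T,\infty)$; the contribution of $[T,\infty)$ is therefore $\tfrac14 W(T)^2$ to the numerator and $W(T)^2$ to the denominator. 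Hence
\begin{equation*}
S_h-\tfrac{1}{4}=\inf_{W\in\widetilde W_h}\frac{\int_0^T (W')^2\,dt+\tfrac{1}{2}W(0)^2+\tfrac{1}{4}W(T)^2}{\int_0^T W^2\,dt+W(T)^2},
\end{equation*}
where $\widetilde W_h\subset H^1(0,T)$ consists of continuous functions lying in $\mathrm{span}\{e^{t/2},e^{-t/2}\}$ on each cell of the $t$-mesh induced by $\{x_i\}$.

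The inclusion $\widetilde W_h\subset H^1(0,T)$ yields $S_h-\tfrac14\geq\mu_{\mathrm{cont}}(T)$, where $\mu_{\mathrm{cont}}(T)$ is the first eigenvalue of the Sturm--Liouville problem $-y''=\mu y$ with the Robin conditions $y'(0)=y(0)/2$ and $y'(T)=(\mu-\tfrac14)y(T)$ obtained as the Euler--Lagrange system of the relaxed quotient. Inserting $y=A\cos(\omega t)+B\sin(\omega t)$ with $\mu=\omega^2$, the first condition forces $A=2\omega B$ and the second reduces to
\begin{equation*}
\omega\bigl(\tfrac{3}{2}-2\omega^2\bigr)\cos(\omega T)=\bigl(3\omega^2-\tfrac{1}{4}\bigr)\sin(\omega T).
\end{equation*}
Writing $\omega T=\pi+\delta$ with $\delta$ small and expanding yields $\delta=-6\omega+O(T^{-2})$, hence $\omega=\pi/(T+6)+O(T^{-3})$ and
\begin{equation*}
\mu_{\mathrm{cont}}(T)=\tfrac{\pi^2}{T^2}-\tfrac{12\pi^2}{T^3}+O(T^{-4})=\tfrac{\pi^2}{T^2}+o\!\left(\tfrac{1}{T^2}\right).
\end{equation*}

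For the matching upper bound I fix an exponent $\alpha=\alpha(h)\in(0,1)$ with $\alpha\to 1$, let $x_*$ be the mesh node closest to $h^\alpha$, and set $T_*:=|\log x_*|\sim\alpha T$. The trial profile $u^*(x):=x^{1/2}\sin(\pi|\log x|/T_*)$ on $[x_*,1]$, extended by zero to $[0,x_*]$, solves $-(u^*)''=(\tfrac14+\pi^2/T_*^2)x^{-2}u^*$ with $u^*(x_*)=u^*(1)=0$, so the Rayleigh quotient of $u^*$ equals $\tfrac14+\pi^2/T_*^2$. The main obstacle is controlling the interpolation error, since $u^*$ is only marginally in $H^2$: from $|(u^*)''|\lesssim x^{-3/2}$ and the standard elementwise bound summed over $x_i\geq x_*$,
\begin{equation*}
\|(u_h-u^*)'\|_{L^2(0,1)}^2\lesssim h^2\sum_{i\geq x_*/h}\int_{x_i}^{x_{i+1}}x^{-3}\,dx\lesssim\sum_{i\geq x_*/h}\tfrac{1}{i^3}\lesssim h^{2(1-\alpha)},
\end{equation*}
where $u_h\in V_h$ is the nodal interpolant of $u^*$. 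Combined with $\|(u^*)'\|_{L^2}\lesssim\sqrt{T}$, Hardy's inequality applied to $u_h-u^*$, and an elementary expansion of the Rayleigh quotient, one deduces that the Rayleigh quotients of $u_h$ and $u^*$ differ by at most $Ch^{1-\alpha}/\sqrt{T}$. Choosing $1-\alpha=2(\log T)/T$ gives $h^{1-\alpha}=T^{-2}$, which makes this correction $O(T^{-5/2})=o(1/T^2)$, while simultaneously $\pi^2/T_*^2-\pi^2/T^2=O((\log T)/T^3)=o(1/T^2)$. Combining upper and lower bounds proves the theorem.
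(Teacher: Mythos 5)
Your proposal is correct, and both halves land where the paper's proof does. The upper bound is essentially the paper's argument: your $u^*$ extended by zero below a cutoff node $x_*$ is the same trial function as the paper's $v_\varepsilon$ in \cref{min1} (with $\varepsilon=x_*$), your elementwise interpolation estimate plus Hardy applied to $u_h-u^*$ reproduces \cref{lem:1d-estimates}, and your choice $1-\alpha=2(\log T)/T$ plays the same role as the paper's $\varepsilon=\lfloor|\log h|^3\rfloor h$ — both make $|\log x_*|=T-O(\log T)$ so that the cutoff only perturbs $\pi^2/T^2$ by $o(T^{-2})$ while keeping the interpolation error $o(T^{-2})$.

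The lower bound shares the paper's key structural idea — relax $V_h$ to the space of functions required to be linear only on the first cell $(0,h)$ (the paper's $U_h$) and compute the relaxed minimum $\mu_h$ exactly — but your computation is genuinely different in presentation. The paper uses a calibration argument: it adds a null term built from an auxiliary function $\varphi$ solving a Riccati-type ODE and completes the square, which it advertises as robust even when optimality conditions cannot be solved. You instead substitute $u=x^{1/2}w$, $t=-\log x$, which turns the problem into a Rayleigh quotient on $(0,T)$ with $T=|\log h|$ and an exactly computable tail contribution $\tfrac14 W(T)^2$ and $W(T)^2$ from the linear cell, and then solve the Euler--Lagrange (Robin) eigenvalue problem directly. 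Your secular equation $\omega(\tfrac32-2\omega^2)\cos(\omega T)=(3\omega^2-\tfrac14)\sin(\omega T)$ yields $\omega=\pi/(T+6)+O(T^{-3})$, in exact agreement with the paper's $\delta_h=\pi/(6+|\log h|)+o(1/|\log h|)$ from \cref{lem:1d-asympt}. The paper explicitly notes that $\mu_h$ "could be established by solving the optimality conditions"; your route is that alternative, made clean by the logarithmic change of variables, and it buys a transparent spectral interpretation of the $6$ in the denominator at the cost of the generality the calibration method offers. One point worth tightening: the boundary condition $y'(T)=(\mu-\tfrac14)y(T)$ is eigenvalue-dependent, so this is not a textbook Sturm--Liouville problem and "first eigenvalue" needs a word of justification; it suffices to note that the numerator of your quotient is nonnegative (so $\mu_{\mathrm{cont}}>0$), that no root of the secular equation exists with $\omega T\in(0,\pi/2]$ (the two sides have opposite signs there), and that exactly one root lies in $(\pi/2,\pi)$ by monotonicity of the tangent.
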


We also prove the same square logarithmic convergence, this time without the optimal prefactor, in $n\geq 3$ dimensions when the domain $\Omega=B$ is the unit ball. This restriction is justified because the Hardy constant is independent of the domain $\Omega$, and is convenient because then the minimization problem \cref{e:hardy-ratio-def-3d} enjoys a rotational symmetry. In particular, the minimization can be restricted to functions $u \in W^{1,p}_0(B)$ depending only on the radial coordinate $r$. Thus, the optimal Hardy constant for $p=2$ and dimension $n\geq 3$ is
\begin{equation}\label{S_n}
    S^n := \frac{(n-2)^2}{4} 
    = \inf_{\substack{u \in H^1(0,1) \\ u(1)=0}} \; \frac{\displaystyle{\int_0^1 r^{n-1}\abs{u'}^2\,dr }}{\displaystyle{\int_0^1 r^{n-3} \abs{u}^2\,dr}} 
    .
\end{equation}
We define its discrete version as
\begin{equation}\label{Snh}
    S^n_{h}=\min_{\substack{v\in V_{h} \\ v(1)=0} } \; \frac{ \displaystyle \int_0^1 r^{n-1} \abs{v'}^2 \, dr }{ \displaystyle \int_0^1 r^{n-3} \abs{v}^2\; dr}
\end{equation}
and prove the following statement.

\begin{theorem}\label{main-3d-spherical}
    For every $n\geq 3$ and every sufficiently small triangulation size $h$,
    \begin{equation*}
        S^n + \frac{\pi^2}{\abs{ \log h }^2}  +  o\left( \frac{1}{\abs{ \log h }^2} \right)
        \leq S^n_h \leq 
        S^n + \frac{(n+1)^2\pi^2}{4\abs{\log(h)}^2}
        + o\left(\frac{1}{\abs{\log h}^2}\right).
    \end{equation*}
\end{theorem}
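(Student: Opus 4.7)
The plan is to reduce the variational problem \eqref{Snh} to a spectral problem on the half-line via a weighted substitution, and then establish the two-sided asymptotic by constructing an explicit near-minimizer for the upper bound and proving a Poincaré-type inequality for the lower bound. First, set $w(r) = r^{(n-2)/2} u(r)$. Since any $u \in V_h$ with $u(1) = 0$ is bounded at $r = 0$, one has $w(0) = w(1) = 0$, and integrating by parts the cross term in the expansion of $(u')^2$ yields
\[
\int_0^1 r^{n-1} (u')^2\,dr = \frac{(n-2)^2}{4}\int_0^1 r^{-1} w^2\,dr + \int_0^1 r(w')^2\,dr,
\]
together with $\int_0^1 r^{n-3} u^2\,dr = \int_0^1 r^{-1} w^2\,dr$. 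The Rayleigh quotient in \eqref{Snh} therefore reads $S^n + \Lambda(u)$, where $\Lambda(u) := \int_0^1 r(w')^2\,dr / \int_0^1 r^{-1}w^2\,dr$. The change of variables $t = -\log r$, $\psi(t) = w(e^{-t})$ then rewrites this as $\Lambda(u) = \int_0^\infty (\psi')^2\,dt / \int_0^\infty \psi^2\,dt$, where $\psi$ satisfies $\psi(0) = 0$, decays as $t \to \infty$, and on each transformed mesh interval $(t_{i+1},t_i)$ is of the form $A\,e^{-(n-2)t/2} + B\,e^{-nt/2}$. Setting $L = \abs{\log h}$, the goal becomes sandwiching $\min\Lambda$ between $\pi^2/L^2 + o(1/L^2)$ and $(n+1)^2\pi^2/(4L^2) + o(1/L^2)$.

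For the upper bound, I choose $M = 2L/(n+1)$ and take $v_h \in V_h$ to be the nodal interpolant of
\[
f(r) := \begin{cases} r^{(2-n)/2}\sin\!\left(\tfrac{\pi}{M}\log\tfrac{1}{r}\right), & r \in (e^{-M},1), \\ 0, & \text{otherwise.} \end{cases}
\]
The corresponding $w_f = r^{(n-2)/2} f$ equals $\sin((\pi/M)\log(1/r))$ on the support and becomes the pure sine $\sin(\pi t/M)$ on $(0, M)$ in the logarithmic variable, so $\Lambda(f) = \pi^2/M^2 = (n+1)^2\pi^2/(4L^2)$. I then show that the nodal interpolation preserves this value up to $o(1/L^2)$ corrections using local Bramble--Hilbert estimates: since a direct differentiation gives $\abs{f''(r)} \lesssim r^{-(n+2)/2}$ on the support, the weighted interpolation error in the numerator satisfies $\sum_i \int_{r_i}^{r_{i+1}} r^{n-1}(f' - v_h')^2\,dr \lesssim h^2 \int_{e^{-M}}^1 r^{-3}\,dr \sim h^2 e^{2M} = h^{2(n-1)/(n+1)} = o(1/L^2)$, with an analogous estimate in the denominator. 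The choice $M = 2L/(n+1)$ is the largest compatible with this error bound, and this is precisely what produces the prefactor $(n+1)^2/4$ in the upper bound.

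For the lower bound I continue in the logarithmic variable and split $\int_0^\infty = \int_0^L + \int_L^\infty$. Writing $C = \psi(L)$, the tail on $(L, \infty)$ is a single element of the two-dimensional span $\{e^{-(n-2)t/2}, e^{-nt/2}\}$ pinned by $\psi(L) = C$; an explicit computation in this span yields both a tail Rayleigh lower bound $\int_L^\infty (\psi')^2\,dt \geq \mu_n \int_L^\infty \psi^2\,dt$ with an $n$-dependent constant $\mu_n > 0$, and a minimum tail $L^2$-mass $\int_L^\infty \psi^2\,dt \geq C^2/(2(n-1))$ over admissible tails. Thus a nonvanishing $C$ contributes at least $\mu_n C^2/(2(n-1))$ to the numerator of the Rayleigh ratio, dwarfing the target order $\pi^2/L^2$ unless $C$ is correspondingly small. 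In the small-$C$ regime, the bulk effectively satisfies Dirichlet conditions at both endpoints of $(0, L)$, upgrading the one-sided Poincaré constant $\pi^2/(4L^2)$ to the two-sided value $\pi^2/L^2$. The main obstacle is to quantify this trade-off cleanly, ideally through an improved Poincaré inequality on $(0, L)$ of the form $\int_0^L (\psi')^2\,dt \geq (\pi^2/L^2)\int_0^L \psi^2\,dt - \kappa C^2$ with $\kappa$ small enough to be absorbed by the tail contribution, yielding the sharper prefactor $\pi^2/L^2$ after optimizing in $C$.
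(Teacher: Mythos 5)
Your reduction is clean and, for the lower bound, genuinely different in spirit from the paper's: after substituting $w=r^{(n-2)/2}u$ and $t=-\log r$ you arrive at a Poincar\'e-type problem for $\psi$ on $(0,\infty)$ with $\psi(0)=0$ and $\psi$ constrained to $\mathrm{span}\{e^{-(n-2)t/2},e^{-nt/2}\}$ on the tail $t>L$, whereas the paper relaxes $V_h$ to a space $W_h$ of functions that are merely linear on the first and last mesh elements and runs a calibration argument (a multiplier $\varphi$ solving a Riccati equation turns the functional into a sum of manifest squares), which yields the sharp expansion including the next-order constant $8(n-1)/(n(n-2))$. Your two tail facts are correct: the Rayleigh quotient on the two-dimensional span is bounded below by a positive $\mu_n$, and the minimal tail mass subject to $\psi(L)=C$ is $C^2/(2(n-1))$ (I verified this for $n=3$). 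The genuine gap is the step you yourself flag as ``the main obstacle'': the bulk inequality is left unproven, and as literally stated it is false. The inequality $\int_0^L(\psi')^2\,dt\ge(\pi^2/L^2)\int_0^L\psi^2\,dt-\kappa\,\psi(L)^2$ is equivalent to the first eigenvalue of $-\psi''$ on $(0,L)$ with $\psi(0)=0$ and Robin condition $\psi'(L)+\kappa\psi(L)=0$ being at least $\pi^2/L^2$; that eigenvalue equals $\pi^2/L^2-2\pi^2/(\kappa L^3)+O(L^{-4})$, which is strictly below $\pi^2/L^2$ for every finite $\kappa$. What you actually need, and what does close the argument, is precisely this Robin asymptotic: taking $\kappa=\mu_n/(4(n-1))$ and splitting the denominator as $\int_0^L\psi^2+\int_L^\infty\psi^2$ gives $\int_0^\infty(\psi')^2\ge(\pi^2/L^2+o(1/L^2))\int_0^\infty\psi^2$. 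You must state and prove that quantitative version rather than the exact-constant one; without it the lower bound is only a plan.

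The upper bound is essentially the paper's construction transported to logarithmic coordinates: your $f$ is the function $v_\varepsilon$ of \cref{e:v-interp-n} with $\varepsilon=e^{-M}=h^{2/(n+1)}$, and $\pi^2/M^2=(n+1)^2\pi^2/(4L^2)$ reproduces the stated prefactor. Two caveats. First, $e^{-M}$ must be (moved to) an interpolation node, since $f'$ jumps there and $f\notin H^2$ across that point; the paper arranges $\varepsilon=mh$ exactly for this reason, and the ``analogous estimate in the denominator'' is not automatic — the paper controls $\int r^{n-3}\abs{\Pi_hf-f}^2$ by applying the Hardy inequality to $\Pi_hf-f$. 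Second, your claim that $M=2L/(n+1)$ is ``the largest compatible with this error bound'' is internally inconsistent: your element-local estimate gives a numerator error of order $h^2e^{2M}=h^{2(1-\alpha)}$ for $M=\alpha L$, which is $o(1/L)$ for every $\alpha<1$, so your own (sharper) bound would permit $\alpha$ arbitrarily close to $1$ and a prefactor approaching $\pi^2$. The factor $(n+1)^2/4$ in the paper comes instead from the cruder global bound $\int_\varepsilon^1\abs{f''}^2\,dr\le\varepsilon^{-(n+1)}\int_\varepsilon^1\abs{f''}^2r^{n+1}\,dr$ in \cref{lem:estimates}, which forces $\varepsilon\gtrsim h^{2/(n+1)}$. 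This does not invalidate your proof of the stated inequality, but the reasoning for the choice of $M$ is wrong, and if your local estimate were written out carefully it would in fact improve the upper bound of \cref{main-3d-spherical}.
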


Finally, in the special case of $n=3$ dimensions, we prove a square logarithmic convergence rate for the discrete Hardy constant even when the rotational symmetry of the unit ball is not exploited. Precisely, let $V_h^3$ be the space of functions in $H^{1}_0(B)$ that are piecewise linear on a general triangulation of the unit ball of $\mathbb{R}^3$ (see \Cref{elementi} for a precise definition) and recall from \cref{S_n} that $S^3 = 1/4$. We establish the following estimates. 

\begin{theorem}\label{main2}
Let $n = 3$ and let $V_h^3$ be a triangulation of $B$ of size $h$. There exists a positive constant $C$ such that, for every sufficiently small triangulation size $h$,
    \begin{equation*}
        \frac14 + \frac{C}{\abs{\log (h)}^2}\le \min_{v \in V_h^3 }\frac{\displaystyle\int_B \abs{\nabla v}^2 \;dx}{\displaystyle\int_B \abs{x}^{-2} \abs{v}^2 dx} 
       \leq \frac14 + 
       \frac{\pi^2}{\abs{\log h}^2}
       + o\left(\dfrac{1}{\abs{\log h}^2}\right).
    \end{equation*}
\end{theorem}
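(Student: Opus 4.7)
The plan is to prove the two inequalities separately. For the \emph{upper bound}, I would lift the radial quasi-minimizer used in \cref{main-3d-spherical} to $V_h^3$ by nodal Lagrange interpolation, and exploit its Euler--Lagrange equation to cancel the dominant cross term in the Rayleigh quotient expansion. Let
\[
u(x) = \phi(\abs{x}), \qquad \phi(r) = r^{-1/2}\sin\!\left(\pi\,\frac{\abs{\log r}}{\abs{\log h}}\right), \quad r\in[h,1],
\]
extended by zero outside, so that $u \in H_0^1(B)$ and the logarithmic substitution $t=-\log r$ gives $R(u) = \tfrac14 + \pi^2/\abs{\log h}^2$. Set $v_h := I_h u \in V_h^3$ and $w := v_h - u$; expanding the Rayleigh quotient yields
\[
R(v_h) - R(u) = \frac{\norma{\nabla w}_{L^2}^2 - R(u)\,\norma{\abs{x}^{-1}w}_{L^2}^2 + 2\bigl[\int_B\nabla u\cdot\nabla w - R(u)\int_B\abs{x}^{-2}u\,w\bigr]}{\norma{\abs{x}^{-1}v_h}_{L^2}^2}.
\]
Because $\phi$ satisfies $-(r^2\phi')' = R(u)\phi$ on $(h,1)$, one has $-\Delta u = R(u)\,\abs{x}^{-2}u$ pointwise on $B\setminus\partial B_h$, so after integration by parts the two bracketed cross terms cancel up to a surface integral across $\partial B_h$ carrying the jump of $\nabla u$. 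A direct calculation exploiting the $\sin^2$ weight shows $\abs{u}_{H^2(B)}^2\lesssim 1/(h^2\abs{\log h}^2)$, hence the Bramble--Hilbert inequality yields $\norma{\nabla w}_{L^2}^2\lesssim 1/\abs{\log h}^2$ and analogously $\norma{\abs{x}^{-1}w}_{L^2}^2\lesssim 1/\abs{\log h}^2$; the surface term is bounded similarly. Dividing by $\norma{\abs{x}^{-1}v_h}_{L^2}^2\sim\abs{\log h}$ then produces the required $o(1/\abs{\log h}^2)$ correction.

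For the \emph{lower bound}, I would combine the Brezis--V\'azquez-type improvement
\[
\int_B\abs{\nabla v}^2\,dx \ge \frac14\int_B\frac{\abs{v}^2}{\abs{x}^2}\,dx + c\int_B\frac{\abs{v}^2}{\abs{x}^2\log^2(C/\abs{x})}\,dx, \qquad v\in H_0^1(B),
\]
with the structural fact that every $v\in V_h^3$ is affine on the single tetrahedron containing the origin, so that $\int_{\{\abs{x}\le h\}}\abs{v}^2/\abs{x}^2$ is only a vanishing fraction of the full Hardy mass. On the complementary region $\{\abs{x}\ge h\}$ one has $\log^2(C/\abs{x})\le(\abs{\log h}+O(1))^2$, and the improvement term is therefore bounded below by a multiple of $(1/\abs{\log h}^2)\int_B\abs{v}^2/\abs{x}^2\,dx$, giving $R(v)\ge \tfrac14 + C/\abs{\log h}^2$.

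The central difficulty will be the \emph{cross-term cancellation} in the upper bound: a direct Cauchy--Schwarz estimate gives only $\int_B\nabla u\cdot\nabla w\lesssim\abs{\log h}^{-1/2}$, far too crude to preserve the sharp constant $\pi^2$. The Euler--Lagrange identity converts this cross term into a surface integral over $\partial B_h$, where $\phi'$ has a jump of order $\abs{\log h}^{-1}h^{-3/2}$; controlling this boundary integral to the required order $o(1/\abs{\log h}^2)$ demands delicate $L^\infty$ and volume estimates for the nodal interpolation error in the $h$-scale annulus around $\partial B_h$, exploiting crucially that $w$ vanishes at each mesh vertex.
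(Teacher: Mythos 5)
Your overall architecture (interpolate a near-minimizer for the upper bound; invoke an improved Hardy inequality with logarithmic remainder for the lower bound) matches the paper's, but both halves as written contain genuine gaps.

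For the \emph{lower bound}, the structural claim that ``$\int_{\{\abs{x}\le h\}}\abs{v}^2/\abs{x}^2$ is only a vanishing fraction of the full Hardy mass'' for every $v\in V_h^3$ is false: take $v$ to be the nodal basis function at the origin, which is supported in the star of tetrahedra around $0$, hence in a ball of radius $O(h)$. For this $v$ one computes $\int_{\{\abs{x}\le h\}}\abs{v}^2/\abs{x}^2\sim h$ while $\int_{\{\abs{x}\ge h\}}\abs{v}^2/\abs{x}^2\lesssim h$ (and can even vanish if the star lies inside $B_h$), so the inner region carries a non-vanishing -- possibly total -- fraction of the Hardy mass. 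Your reduction to the region $\{\abs{x}\ge h\}$ therefore breaks down precisely for mesh functions concentrated at the origin, and an additional argument (e.g.\ a dichotomy showing such concentrated functions have Rayleigh quotient bounded away from $1/4$, or an inverse estimate on the elements meeting $B_h$) is needed to close it. The paper sidesteps this by using the Wang--Willem remainder $K\int\abs{\nabla\phi}^2(\log(\abs{x}/R))^{-2}$, whose integrand involves the \emph{gradient}: since $\abs{\nabla v_h}$ is piecewise constant, a coarea/integration-by-parts computation on $B_h$ shows the logarithmic weight can be replaced by $C/\abs{\log h}^2$ uniformly over all of $V_h^3$, with no case analysis.

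For the \emph{upper bound}, you correctly identify that truncating the trial function exactly at $r=h$ makes plain Cauchy--Schwarz too crude and forces the Euler--Lagrange cancellation plus a surface integral over $\partial B_h$; but you leave that estimate unresolved, and moreover your $u$ is not in $H^2(B)$ (its gradient jumps across $\partial B_h$), so the global Bramble--Hilbert bound $\norma{\nabla w}_{L^2}^2\lesssim 1/\abs{\log h}^2$ cannot be invoked directly -- the elements straddling $\partial B_h$ must be treated separately with only first-order control of the interpolation error there. A rough count suggests the surface term is $O(1/\abs{\log h}^2)$ before normalization, hence $O(1/\abs{\log h}^3)$ after dividing by $\norma{\abs{x}^{-1}v_h}_{L^2}^2\sim\abs{\log h}$, so the scheme is probably salvageable, but none of this is carried out. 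The paper avoids the difficulty entirely: it regularizes at the larger scale $\varepsilon=h\abs{\log h}^3$ via the globally $H^2(B)$ function in \cref{v_epsilon_n=3}, so that the relative interpolation error is $O(h/\varepsilon)=O(1/\abs{\log h}^3)=o(1/\abs{\log h}^2)$ by Cauchy--Schwarz alone, at the harmless cost of replacing $\abs{\log h}$ by $\abs{\log h}-3\log\abs{\log h}$ in the leading correction. I would recommend adopting that regularization rather than pursuing the surface-term analysis.
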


\noindent
The lower bound in this result holds in fact for any dimension $n\geq 3$, with the constant $\frac14$ replaced by the Hardy constant $S^n$ and with a constant $C$ that depends on $n$ (see  \cref{ss:lb-3d-liviu}).

Estimating the convergence rates for numerical approximations of optimal constants for functional inequalities is not a new problem. For example, approximations of the optimal Poincar\'e constant were studied in~\cite{buffa}, while convergence rates for finite element approximations to the Sobolev constant were established in~\cite{aldo}. There is also a related literature on estimating eigenvalues of operators, see for instance  \cite{Carstensen2014, Weinberger1956,guo_schr, sui}. 
While each of these problems presents its own challenges for numerical analysis, one can categorize functional inequalities into four broad classes with increasing complexity:
\begin{enumerate}[leftmargin=*, align=left]
    \item Inequalities where the equality is attained by a smooth function. This is the case, for example, for the Poincar\'e inequality in smooth domains.
    \item Inequalities where the equality is attained, but not by a smooth function. Examples in this class include Poincar\'e-type inequalities for elliptic operators in nonsmooth domains or with singular potentials.
    \item Inequalities where the equality is attained only when the underlying domain is the full space. The Sobolev inequality analyzed in \cite{aldo} belongs to this class.
    \item Inequalities where the equality is not attained, even on the full space.
\end{enumerate}

The Hardy inequality falls in the last class of problems and, as such, poses unique challenges. Indeed, to prove the upper bounds in \Cref{main,,main-3d-spherical,,main2} one can follow the strategy in \cite{aldo} and apply finite element interpolation estimates to minimizing sequences for the problems in \cref{e:hardy-ratio-def-1d,e:hardy-ratio-def-3d}. However, there are many possible minimizing sequences, so care must be taken to choose one with fast convergence properties. Finding lower bounds on the discrete Hardy constant is also not straightforward. In \cite{aldo}, the gap between the Sobolev constant and its finite element approximation was estimated from below using a quantitative version of the Sobolev inequality from \cite{fusco}, which estimates how far a function is from attaining equality.
Quantitative Hardy inequalities also exist (see, e.g., \cite{marcus, breva, vazu, Gazz} and \cite[Section 2.5]{peral}) and a version due to Wang \& Willem \cite{wang2003} suffices in dimension $n = 3$ to derive the lower bound in \cref{main2}.
For the lower bounds in \Cref{main,main-3d-spherical}, instead, we follow a strategy inspired by `calibration methods' from the calculus of variations (see, e.g., \cite[Section~1.2]{Buttazzo1998}), which is slightly more involved but is particularly well-suited to the one-dimensional nature of the variational problems in \cref{e:discrete-hardy-min} and \cref{Snh}.
The idea, loosely speaking, is to add to the Hardy inequality terms that integrate to zero and make the inequality evident.
This strategy is known to produce sharp estimates for principal eigenvalues of elliptic operators and of the $p$-Laplacian in dimension $n=1$ if $p$ is an even integer \cite{Chernyavsky2021}, and it has recently received attention in the optimization community because it lends itself to efficient numerical implementation \cite{Fantuzzi2022,Korda2018,Chernyavsky2021,Henrion2023}. Here, we use it to derive lower bounds for the discrete Hardy constant that not only show optimal dependence on the mesh size, but also exhibit an excellent quantitative agreement with computational results. The ability to produce explicit and accurate estimates is the main advantage of our `calibration' approach compared to using a quantitative Hardy inequality.

The rest of this article is organized as follows. 
\Cref{elementi} reviews basic notions of the finite element method. \Cref{main,main2} are proved in \Cref{s:1,s:3}, respectively. The proof of \Cref{main-3d-spherical}, instead, is relegated to \cref{s:proof-3d-radial} because the strategy is the same as for the one-dimensional case, but the computations are more cumbersome. \Cref{s:numerics} briefly compares the estimates in \cref{main,main-3d-spherical} to numerical values for the discrete Hardy constants obtained computationally for $n=1$ and $n=3$. \Cref{s:discussion} concludes the paper with a list of open problems.

\section{Finite Element Spaces}\label{elementi}

We start with a review of key notions about the finite element method. Readers are referred to \cite[Chapter 3]{quarteroni} and \cite{raviart} for details. We work in dimension $n=3$, but all results carry over to dimension $n = 1$ upon replacing polyhedra with intervals.

\begin{definizione}\label{approx}
    Let $\Omega \subset \mathbb{R}^n$ be a polyhedral domain (i.e., a finite union of polyhedra) and let $h>0$. A family $\mathcal{T}_h$ of polyhedra is called a \emph{triangulation} of $\overline{\Omega}$ if
    \begin{itemize}[leftmargin=*, align=left, noitemsep]
        \item Every $T\in \mathcal{T}_h$ is a subset of $\Omega$ with non-empty interior $\overset{\circ}{T}$;
        \item $ \overset{\circ}{T}_1\cap \overset{\circ}{T}_2=\emptyset $ for all $T_1\neq T_2 \in \mathcal{T}_h$;
        \item If $T_1\neq T_2 \in \mathcal{T}_h$ have $T_1\cap T_2\neq \emptyset$, then they share a common face, side or vertex;
        \item ${\rm diam} (T)\leq h$ for every $T\in \mathcal{T}_h$.
    \end{itemize}
    The vertices of the polyhedra in the triangulation $\mathcal{T}_h$ are called \emph{interpolation nodes}.
\end{definizione}

We restrict our attention to \emph{affine} triangulations, meaning that every element $T\in\mathcal{T}_h$ is the image of a reference polyhedron $\hat{T}$ under a $C^1$, invertible and affine map.
In particular, we will fix $\hat{T}$ to be the unit simplex.   
We also assume that the triangulations are \emph{shape regular}, meaning that there exists a constant $\sigma>0$ such that
\begin{equation*}
    \dfrac{h_T}{\rho_T}\leq \sigma \qquad \forall T\in \mathcal{T}_h,
\end{equation*}
where $\rho_T$ is the radius of the largest ball inscribed in $T$ and  $h_T$ is the diameter of $T$.
Finally, we impose that our meshes are \textit{uniform}, meaning  that  we require  $h/h_T$ to be uniformly bounded in $T\in\mathcal{T}_h$. As usual, for a given triangulation $\mathcal{T}_h$, we set without loss of generality
$$\displaystyle{h:=\max_{T\in\mathcal{T}_h}} \, h_T.$$ 

In dimension $n=3$, let $B$ be the open unit ball of $\mathbb{R}^3$. Let $B_h \subset B$ be an open polyhedral approximation of $B$ such that the boundary vertices of $\overline{B_h}$ lie on $\partial B$ and $\abs{B\setminus B_h}\leq h^2$. Such a polyhedral domain $B_h$ exists because $B$ is smooth and convex.
Let $\mathcal{T}_h$ be a triangulation of $B_h$ and denote by $V^3_h$ the space of functions in $H^1_0(B)$  that vanish on $B\setminus B_h$ and whose restriction to each element $T\in\mathcal{T}_h$ is linear. In dimension $n=1$, we define the space $V_h$ of continuous and piecewise linear functions on a triangulation (or \emph{mesh}) of $B_h=B=(0,1)$ in a similar way.

Next, we introduce  the finite element interpolation operator. 
\begin{definizione}\label{pro}
    The \emph{interpolation operator} $\Pi_h : C^0(\overline{B})\to V^3_h$ maps any continuous function $f$ to the continuous and piecewise linear function $\Pi_h f$ satisfying $\Pi_h f(x_i)=f(x_i),$ where $x_i$ are the interpolation nodes. 
\end{definizione}

In dimension $n\leq 3$ the interpolation operator is well-defined for every function in $H^2(B)$ because this space embeds continuously into $C^0(\overline{B})$. The following result is a restatement of \cite[Theorem 5.1-4]{raviart}.
\begin{theorem}\label{thint1}
   Let $\mathcal{T}_h$ be an affine, uniform and shape regular triangulation of a polyhedral domain $\Omega \subset \mathbb{R}^3$.
   There exists a constant $C_1>0$ such that, for every $f\in H^2(\Omega)$,
\begin{equation}\label{gradiente_pol}
    \norma{\nabla(\Pi_h f-f)}_{L^2(\Omega)}\le C_1 h \norma{D^2 f}_{L^2(\Omega)}.
\end{equation}
\end{theorem}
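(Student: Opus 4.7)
The plan is the standard Bramble--Hilbert plus scaling argument applied elementwise; \cref{thint1} is merely a rephrasing of the classical piecewise-linear interpolation estimate on affine shape-regular triangulations, and its proof uses no feature specific to the Hardy setting. First, I would reduce to the reference unit simplex $\hat T$: for each $T\in\mathcal{T}_h$, let $F_T(\hat x)=B_T\hat x+b_T$ be the affine bijection with $F_T(\hat T)=T$, and set $\hat f:=f\circ F_T$. Because the nodal interpolant is affinely covariant, $\widehat{\Pi_h f}=\hat\Pi\hat f$ on $\hat T$, where $\hat\Pi$ is the continuous piecewise-linear nodal interpolant on $\hat T$.

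Next, I would prove the estimate on $\hat T$. Since $n\le 3$, the embedding $H^2(\hat T)\hookrightarrow C^0(\overline{\hat T})$ makes $\hat\Pi\colon H^2(\hat T)\to H^1(\hat T)$ continuous, and since $\hat\Pi$ reproduces $\mathbb{P}_1$ exactly, the operator $I-\hat\Pi$ annihilates affine functions. Applying it to $\hat f-p$ for arbitrary $p\in\mathbb{P}_1$ and invoking the Deny--Lions inequality $\inf_{p\in\mathbb{P}_1}\norma{\hat f-p}_{H^2(\hat T)}\le \hat C_0\abs{\hat f}_{H^2(\hat T)}$ yields a constant $\hat C$ depending only on $\hat T$ such that
\begin{equation*}
\norma{\nabla(\hat\Pi\hat f-\hat f)}_{L^2(\hat T)}\le \hat C\,\abs{\hat f}_{H^2(\hat T)}.
\end{equation*}
I would then transport this back to $T$ via $\hat x=F_T^{-1}(x)$. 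Standard change-of-variable computations produce
\begin{equation*}
\norma{\nabla(\Pi_h f-f)}_{L^2(T)}\lesssim \norma{B_T^{-1}}\abs{\det B_T}^{1/2}\norma{\nabla(\hat\Pi\hat f-\hat f)}_{L^2(\hat T)}
\end{equation*}
together with $\abs{\hat f}_{H^2(\hat T)}\lesssim \norma{B_T}^2\abs{\det B_T}^{-1/2}\abs{f}_{H^2(T)}$, so that the Jacobian determinants cancel.

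Combining these bounds gives a local estimate of the form $\norma{\nabla(\Pi_h f-f)}_{L^2(T)}\lesssim \norma{B_T}^2\norma{B_T^{-1}}\abs{f}_{H^2(T)}$. Shape regularity then yields $\norma{B_T}\le Ch_T$ and $\norma{B_T^{-1}}\le C\sigma/h_T$, whence $\norma{B_T}^2\norma{B_T^{-1}}\le Ch_T\le Ch$. Squaring and summing over $T\in\mathcal{T}_h$ produces the claimed global estimate with a constant $C_1$ depending only on the shape-regularity parameter $\sigma$ and on $\hat T$. The only subtle point is ensuring that $\hat C$ is genuinely independent of both $T$ and $f$, but this is automatic since the whole Bramble--Hilbert step is carried out once on the fixed reference simplex, so I do not anticipate any real obstacle in the argument.
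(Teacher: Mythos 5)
Your argument is correct and is precisely the classical Bramble--Hilbert plus affine-scaling proof; the paper itself gives no proof here, simply citing \cite[Theorem 5.1-4]{raviart}, and that reference establishes the estimate by exactly the reduction to the reference simplex, polynomial invariance, and shape-regularity scaling that you describe. No gap to report.
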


{A similar estimate holds if the polyhedral domain $\Omega$ is replaced by a $C^\infty$ domain (see, e.g., \cite[Lemma 5.2-3]{raviart}), except the $L^2$ norm of $D^2f$ must be replaced with the full $H^2$ norm of $f$.} For convenience, we recall this result only in the case of the ball $B$. 
\begin{lemma}\label[lemma]{thint2}
   Let $n=3$. There exists a constant $C_2>0$ such that, for every $f\in H^2(B)$,
    \begin{equation}\label{gradd}
    \norma{\nabla f}_{L^2(B\setminus B_h)}\le C_2 h \norma{f}_{H^2(B)}.
    \end{equation}
\end{lemma}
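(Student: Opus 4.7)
The plan is to combine the geometric observation that $B\setminus B_h$ lies in a thin spherical shell near $\partial B$ with a radial trace-type inequality that controls $L^2$ norms over such shells in terms of a boundary trace and a gradient. Note that the measure bound $\abs{B\setminus B_h}\leq h^2$ stated in the text is not quite what the argument really uses: rather, it exploits the \emph{pointwise} thickness bound
\begin{equation*}
    B\setminus B_h \subset N_\delta := \left\{ x \in B : 1-\abs{x} < \delta \right\}, \qquad \delta \leq Ch^2.
\end{equation*}
This follows from elementary spherical geometry: each boundary face of $B_h$ is a planar triangle with vertices on the unit sphere and diameter at most $h$, and the distance from any of its points to $\partial B$ is bounded by the sagitta of a unit-sphere chord of length at most $h$, which equals $h^2/8 + o(h^2)$.

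Next I would establish the radial trace inequality: for every $u \in H^1(B)$ and every $\delta \in (0,1)$,
\begin{equation*}
    \norma{u}_{L^2(N_\delta)}^2 \leq 2\delta \norma{u}_{L^2(\partial B)}^2 + 2\delta^2 \norma{\nabla u}_{L^2(B)}^2.
\end{equation*}
This is obtained by passing to spherical coordinates $(r,\omega)\in(1-\delta,1)\times S^2$, writing $u(r\omega) = u(\omega) - \int_r^1 \partial_s u(s\omega)\,ds$ via the fundamental theorem of calculus along each radial segment, and then applying the elementary inequality $(a-b)^2 \leq 2a^2 + 2b^2$ together with Cauchy--Schwarz on the radial integral. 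Since $r\in(1-\delta,1)$ the volume element $r^2\,dr\,d\omega$ is bounded by a universal constant, so no extra geometric factors intrude.

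Finally I would apply this inequality componentwise to $u = \partial_i f$ for $i=1,2,3$, sum, and invoke the standard trace inequality $\norma{\nabla f}_{L^2(\partial B)}\leq C\norma{f}_{H^2(B)}$, which is available because each component of $\nabla f$ belongs to $H^1(B)$. Substituting $\delta = Ch^2$ yields
\begin{equation*}
    \norma{\nabla f}_{L^2(B\setminus B_h)}^2 \leq C h^2 \norma{f}_{H^2(B)}^2 + C h^4 \norma{f}_{H^2(B)}^2,
\end{equation*}
and taking square roots gives the claimed bound.

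The main obstacle is not a deep analytic step but the clean derivation of the pointwise $O(h^2)$ bound on the shell width. One must argue carefully that the vertices of $\overline{B_h}$ lying \emph{on} $\partial B$ (rather than in the interior of $B$), together with the mesh size $h$, force the normal distance to be $O(h^2)$ and not merely $O(h)$. For the unit ball this is a brief trigonometric computation, but it is the essential quantitative ingredient that upgrades the measure bound $\abs{B\setminus B_h}\leq h^2$ to a thickness bound, and it is precisely what drives the $O(h)$ rate in the lemma.
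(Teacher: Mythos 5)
The paper does not actually prove this lemma: it is quoted from the reference cited just above it (\cite[Lemma~5.2-3]{raviart}), so there is no in-paper argument to compare against. Your proof is correct and is, in essence, a self-contained reconstruction of the standard argument behind that citation: confine $B\setminus B_h$ to a spherical shell of thickness $O(h^2)$, control the $L^2$ norm of an $H^1$ function on a shell of width $\delta$ by $\sqrt{\delta}$ times a boundary trace plus $\delta$ times the gradient via the fundamental theorem of calculus along radii, and apply this to each component $\partial_i f$. The radial inequality and the final bookkeeping are fine (the Jacobian $r^2$ only perturbs constants, as you note), and you are right that the measure bound $|B\setminus B_h|\le h^2$ alone would not suffice: combined with $H^1(B)\hookrightarrow L^6(B)$ and H\"older it only yields a rate $h^{2/3}$, so the pointwise thickness bound is genuinely the key ingredient.

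The one step to tighten is the containment $B\setminus B_h\subset N_{Ch^2}$. The ``sagitta of a chord of length $h$'' is a two-dimensional picture; in $\mathbb{R}^3$ the relevant quantity is the height of the spherical cap cut off by the plane of a boundary face, namely $1-\sqrt{1-R_F^2}\approx R_F^2/2$, where $R_F$ is the circumradius of that face (its vertices lie on the circle in which the face's plane meets $\partial B$). A triangle of diameter at most $h$ can have circumradius much larger than $h$ if it is nearly degenerate, so you must invoke the shape regularity of $\mathcal{T}_h$ (which bounds the angles of boundary faces away from $0$ and $\pi$) to get $R_F\le Ch$, and the convexity of $B_h$ to conclude that $B\setminus B_h$ is covered by the caps over its faces, each of whose points $x$ then satisfies $1-|x|\le |x|- x\cdot n_F \le 1-\sqrt{1-R_F^2}\le Ch^2$. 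With these two ingredients made explicit the rest of your argument goes through as written.
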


Combining \Cref{thint1}  and \Cref{thint2}, we obtain the following result. 
\begin{theorem}
    \label{thint}
Let $n=3$. There exists a constant $C>0$ such that, for every $f\in H^2(B)$,
    \begin{equation}\label{gradiente}
        \norma{\nabla(\Pi_h f-f)}_{L^2(B)}\le C h \norma{f}_{H^2(B)}.
    \end{equation}
\end{theorem}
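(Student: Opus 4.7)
The plan is to obtain \cref{thint} by simply combining the interior interpolation estimate of \cref{thint1} with the boundary correction estimate of \cref{thint2}, taking advantage of the fact that the two inequalities control the error in the two disjoint subregions whose union is $B$.

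First, I would split the domain of integration as $B = B_h \cup (B\setminus B_h)$, where the two pieces are disjoint up to a set of measure zero. Since $L^2$ norms add over disjoint subsets, this gives
\begin{equation*}
    \norma{\nabla(\Pi_h f - f)}_{L^2(B)}^2
    = \norma{\nabla(\Pi_h f - f)}_{L^2(B_h)}^2
    + \norma{\nabla(\Pi_h f - f)}_{L^2(B\setminus B_h)}^2.
\end{equation*}

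Next, I would bound the two terms on the right-hand side separately. On the polyhedral subdomain $B_h$, \cref{thint1} applied to the affine, uniform and shape-regular triangulation $\mathcal{T}_h$ of $B_h$ yields
\begin{equation*}
    \norma{\nabla(\Pi_h f - f)}_{L^2(B_h)}
    \leq C_1 h \, \norma{D^2 f}_{L^2(B_h)}
    \leq C_1 h \, \norma{f}_{H^2(B)}.
\end{equation*}
On the thin boundary layer $B\setminus B_h$, I would use the fact that, by the construction of $V_h^3$, every finite element function vanishes identically on $B\setminus B_h$; hence $\Pi_h f \equiv 0$ there and the error reduces to $\nabla f$. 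Applying \cref{thint2} directly,
\begin{equation*}
    \norma{\nabla(\Pi_h f - f)}_{L^2(B\setminus B_h)}
    = \norma{\nabla f}_{L^2(B\setminus B_h)}
    \leq C_2 h \, \norma{f}_{H^2(B)}.
\end{equation*}

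Summing these two squared estimates and taking square roots gives \cref{gradiente} with the constant $C = \sqrt{C_1^2 + C_2^2}$. I do not foresee any real obstacle in this argument: the entire content of the theorem is already encoded in \cref{thint1,thint2}, and the only point requiring mild care is the observation that functions in $V_h^3$ are extended by zero onto $B\setminus B_h$, so that on this boundary layer the interpolation error coincides exactly with $-\nabla f$, which is precisely the quantity controlled by \cref{thint2}.
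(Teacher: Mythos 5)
Your proposal is correct and matches the paper's argument exactly: the paper states \cref{thint} as the direct combination of \cref{thint1} (applied on the polyhedral core $B_h$) and \cref{thint2} (applied on the layer $B\setminus B_h$, where $\Pi_h f$ vanishes by the definition of $V_h^3$), which is precisely your decomposition. Nothing further is needed.
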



\section{Proof of \texorpdfstring{\Cref{main}}{Theorem \ref{main}}}
\label{s:1}
This section is dedicated to proving  \Cref{main}. In \cref{ss:lb-1d}, we use a calibration-type argument to establish the lower bound 
\begin{equation}\label{e:lb-1d-statement}
    S_h \geq \frac14 + \left( \frac{\pi}{6+\abs{ \log h }} \right)^2 +o\left( \frac{1}{\abs{\log h}^2} \right).
\end{equation}
The argument, although technical, is interesting because it reveals a nontrivial good minimizing sequence for the minimization in \cref{e:hardy-ratio-def-1d}, which includes a sinusoidal term. We the interpolate a convenient approximation of this function in \cref{ss:lb-1d} to establish the upper bound 
\begin{equation}\label{e:ub-1d-statement}
    S_h \leq \frac14 + \left(\frac{\pi}{|\log h| - 3 \log |\log h|}\right)^2 +o\left(\dfrac{1}{\abs{\log h}^2} \right).
\end{equation}
This and \cref{e:lb-1d-statement} immediately imply the asymptotic expansion for $S_h$ stated in~\cref{main}.

Throughout this section, we shall assume for simplicity that the finite element space $V_h \subset H^1(0,1)$ is based on a uniform mesh whose elements have equal length $h$. All of our arguments, however, extend immediately to spaces $V_h$ defined using meshes with elements $[x_i,x_{i+1}]$ that satisfy $ch\leq x_{i+1}-x_i \leq h$ for some constant $c$ independent of $h$. Indeed, it suffices to replace $h$ with $ch$ in all of our proofs and results.

\subsection{Proof of the lower bound}
\label{ss:lb-1d}

Let $U_h$ be the space of functions in $H^1(0,1)$ that vanish at $x=0$ and are linear on $(0,h)$, but not necessarily on the rest of the interval $(0,1)$. Since the finite element space $V_h$ is strictly contained in $U_h$, we have that
\begin{equation}\label{e:mu-def}
    S_h > \mu_h := \inf_{u \in U_h} \dfrac{\displaystyle{\int_{0}^{1} \abs{u'}^{2}\,dx}}{\displaystyle{\int_{0}^{1}x^{-2}\abs{u}^{2}\,dx}}.
\end{equation}
This estimate, of course, is not expected to be sharp due to the strict gap between $V_h$ and $U_h$. However, as stated in the next theorem, we can compute $\mu_h$ exactly. This is enough to prove the lower bound in \cref{e:lb-1d-statement}.

\begin{theorem}\label{th:mu-h-bounds}
    There holds $\mu_h = 1/4 + \delta_h^2$, where $\delta_h$ solves
    \begin{equation}\label{e:eps-equation}
        \frac{1}{4}+\delta_h\tan\left(\tan^{-1}\frac{1}{2\delta_h} + \delta_h \log h \right)-\delta_h^{2} = 0.
    \end{equation}
    In particular, for $h\ll 1$ we have
    \begin{equation}\label{e:mu-asymptotic-1d}
         \mu_h = \frac14 + \left( \frac{\pi}{6+\abs{ \log h }} \right)^2 +  o\left( \frac{1}{\abs{ \log h }^2} \right).
    \end{equation}
\end{theorem}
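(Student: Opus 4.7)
The plan is to reduce the computation of $\mu_h$ to solving a linear second-order ODE on $(h,1)$ subject to boundary conditions coming from the variational structure, and then to extract the expansion of $\mu_h$ from the resulting transcendental condition.

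First, I would show that $\mu_h$ is attained. For a minimizing sequence $\{u_n\} \subset U_h$ normalized by $\int_0^1 x^{-2} u_n^2\,dx = 1$, the linearity constraint $u_n(x) = a_n x$ on $[0,h]$ bounds $a_n$ uniformly, which in turn controls $u_n$ on $[0,h]$ in $H^1$, while on $[h,1]$ the Hardy denominator controls $\int_h^1 u_n^2\,dx$ and the numerator controls $\int_h^1 (u_n')^2\,dx$. The direct method then yields a minimizer $u \in U_h$ by weak compactness in $H^1(0,1)$ and strong convergence in $C^0([0,1])$.

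Next, writing out the first variation of the Rayleigh quotient, compactly supported variations in $(h,1)$ give the Euler--Lagrange equation $-u'' = \mu_h\, x^{-2} u$ on $(h,1)$; the free right endpoint contributes the natural Neumann condition $u'(1) = 0$; and varying the slope of the linear piece on $[0,h]$, while enforcing continuity of $u$ at $x = h$, produces a Robin-type matching condition $h\, u'(h) = (1 - \mu_h)\, u(h)$. Deriving this last condition correctly is the main subtlety of the argument, as it requires parametrizing admissible variations of $U_h$ through the single degree of freedom available on $[0,h]$ and carefully combining the interior ODE with the jump of $u'$ across $x=h$.

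With $\mu_h = 1/4 + \delta_h^2$ (nonnegative by the Hardy inequality), the general real-valued solution of the ODE on $(h,1)$ is
\begin{equation*}
    u(x) = C\, x^{1/2} \cos\bigl(\delta_h \log x + \phi_0\bigr).
\end{equation*}
Imposing $u'(1) = 0$ forces $\phi_0 = \tan^{-1}(1/(2\delta_h))$, and substituting into the matching condition at $x = h$ yields, after trigonometric simplification, exactly the transcendental equation \cref{e:eps-equation}. The existence of the minimizer together with the Hardy bound guarantees that $\delta_h$ is the smallest nonnegative root of this equation.

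For the asymptotic expansion I would set $T := \abs{\log h}$ and look for a root with $\delta_h \to 0$ such that $\phi_0 + \delta_h \log h$ approaches $-\pi/2$ from above, since this is the only way to balance the limiting value $-1/4$ of the right-hand side of \cref{e:eps-equation}. Writing $\phi_0 + \delta_h \log h = -\pi/2 + \varepsilon$, the equation becomes $\cot\varepsilon = 1/(4\delta_h) - \delta_h$; combining this with $\phi_0 = \pi/2 - \tan^{-1}(2\delta_h)$ and Taylor-expanding both arctangents gives $\delta_h(T + 6) = \pi + O(\delta_h^3)$. Since $\delta_h \sim \pi/T$, this yields $\delta_h = \pi/(T+6) + O(T^{-4})$, and squaring produces the expansion \cref{e:mu-asymptotic-1d} of $\mu_h$.
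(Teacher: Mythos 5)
Your proposal is correct, and it reaches the same transcendental equation \cref{e:eps-equation} and the same expansion as the paper, but by a genuinely different route. The paper does \emph{not} solve the optimality conditions: it explicitly sets that option aside and instead uses a calibration argument, adding to the functional $F_\lambda(u)=\int_0^1 \abs{u'}^2-\lambda x^{-2}u^2\,dx$ the null term $\int_h^1(\varphi u^2/x)'\,dx+\varphi(h)u(h)^2/h$ and choosing $\varphi$ to solve the Riccati equation $x\varphi'-\varphi-\lambda=\varphi^2$ with $\varphi(1)=0$, so that $F_\lambda(u)$ becomes a manifest sum of squares; the condition $1-\lambda+\varphi(h)\geq 0$ then reproduces exactly your Robin condition $hu'(h)=(1-\mu_h)u(h)$ (since $\varphi(h)=\lambda-1$ at equality and the optimal $u$ solves $u'+\varphi u/x=0$), and sharpness is shown by exhibiting the explicit $u_h$ in \cref{e:near-opt-u-1d}, which is your eigenfunction $C\sqrt{x}\cos(\delta_h\log x+\phi_0)$. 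What the calibration buys is that the lower bound $\mu_h\geq 1/4+\delta_h^2$ is certified directly for every $u\in U_h$ without proving attainment or justifying natural boundary conditions, and the method still produces bounds when the optimality system cannot be solved in closed form. Your Euler--Lagrange route buys a more classical eigenvalue interpretation, but it carries two obligations the paper avoids and that you should discharge explicitly: (i) the direct-method existence argument (your sketch is fine, but note the Robin condition at $x=h$ is eigenvalue-dependent, so this is a Steklov-type problem and the identification of $\mu_h$ with the \emph{smallest} admissible root needs the variational characterization, not just Sturm--Liouville folklore); and (ii) ruling out $\delta_h=0$, where the ansatz $x^{1/2}\cos(\delta\log x+\phi_0)$ degenerates to $x^{1/2}(\alpha+\beta\log x)$ --- this follows from strictness of the Hardy inequality (equality is never attained), but deserves a sentence. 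Your verification of the Neumann condition at $x=1$ giving $\phi_0=\tan^{-1}(1/(2\delta_h))$, the Robin matching yielding \cref{e:eps-equation}, and the expansion $\delta_h(\abs{\log h}+6)=\pi+O(\delta_h^3)$ all check out against the paper's computations.
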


This result could be established by solving the optimality conditions for the minimization problem defining $\mu_h$ in \cref{e:mu-def}. Here, however, we present an alternative strategy that applies in general and can produce estimates for $\mu_h$ from below even when the associated optimality conditions cannot be solved analytically. To ease the presentation we break the argument into three steps, which correspond to \cref{lem:1d-lower-bound,,lem:1d-asympt,lem:1d-lb-ub} below.
The first step is to prove the lower bound $\mu_h \geq 1/4 + \delta_h^2$ when $\delta_h$ solves \cref{e:eps-equation}.

\begin{lemma}[Lower bound on $\mu_h$]
\label[lemma]{lem:1d-lower-bound}
    Let $\delta_h$ satisfy \cref{e:eps-equation}. Then, $\mu_h \geq 1/4 + \delta_h^2$.
\end{lemma}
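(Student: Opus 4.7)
The plan is to use a calibration argument. I would choose a positive function $\phi$ on $[h,1]$ solving the ODE $-\phi''=(\tfrac14+\delta^2)x^{-2}\phi$, and then apply the algebraic identity
$$\int_h^1 (u')^2\,dx - \left(\frac14+\delta^2\right)\int_h^1 \frac{u^2}{x^2}\,dx = \int_h^1 \phi^2 \left(\left(\frac{u}{\phi}\right)'\right)^2 dx + \left[\frac{\phi'}{\phi}\,u^2\right]_h^1,$$
which follows by expanding $u=\phi v$ and integrating the mixed term $2\phi\phi' vv'$ by parts. The first term on the right is manifestly nonnegative, so the inequality reduces to controlling boundary contributions. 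Since the ODE has complex characteristic exponents $\tfrac12\pm i\delta$, any real solution oscillates infinitely many times as $x\to 0$, so the calibration cannot be extended to a neighborhood of the origin; the first mesh cell $(0,h)$ must be handled separately.

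On $(0,h)$, the definition of $U_h$ forces $u(x)=ax$ for some $a\in\mathbb{R}$, and a direct computation yields $\int_0^h [(u')^2-(\tfrac14+\delta^2)x^{-2}u^2]\,dx = a^2h(\tfrac34-\delta^2)$. The ODE on $[h,1]$ is solved via the substitution $\phi(x)=x^{1/2}\psi(\log x)$, which reduces it to $\psi''+\delta^2\psi=0$, so $\psi(y)=R\cos(\delta y+\beta)$ for real constants $R,\beta$. I would fix the phase by setting $\beta=\tan^{-1}(1/(2\delta))$, which makes $\phi'(1)=0$ and thereby annihilates the boundary term at $x=1$ regardless of the free value $u(1)$.

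Combining the direct computation on $(0,h)$ with the calibration identity on $[h,1]$ and substituting $u(h)=ah$ yields
$$\int_0^1(u')^2\,dx - \left(\frac14+\delta^2\right)\int_0^1 \frac{u^2}{x^2}\,dx = \int_h^1 \phi^2\left(\left(\frac{u}{\phi}\right)'\right)^2 dx + a^2h\left[\frac34-\delta^2 - h\,\frac{\phi'(h)}{\phi(h)}\right].$$
A short computation gives $h\phi'(h)/\phi(h)=\tfrac12-\delta\tan(\tan^{-1}(1/(2\delta))+\delta\log h)$, so the bracket is precisely $1/4 - \delta^2 + \delta\tan(\tan^{-1}(1/(2\delta))+\delta\log h)$ and vanishes exactly when $\delta$ solves equation \eqref{e:eps-equation}. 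For $\delta=\delta_h$, the right-hand side is nonnegative, so $\mu_h \geq \tfrac14+\delta_h^2$.

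The main technical obstacle is proving that $\phi$ is strictly positive on $[h,1]$, which is essential for the substitution $v=u/\phi$ to be legitimate. I would reduce this to checking that the phase $\alpha(x):=\delta_h\log x + \tan^{-1}(1/(2\delta_h))$ stays in $(-\pi/2,\pi/2)$ for every $x\in[h,1]$. By construction $\alpha(1)\in(0,\pi/2)$, while equation \eqref{e:eps-equation} forces $\tan\alpha(h)=\delta_h-1/(4\delta_h)$, which is large and negative for small $\delta_h$, placing $\alpha(h)$ just above $-\pi/2$; monotonicity of $\alpha$ in $x$ then keeps the phase in the safe range. In parallel, I would verify that \eqref{e:eps-equation} admits a solution $\delta_h\in(0,\tfrac12)$ for all sufficiently small $h$ by an intermediate value argument on the bracketed function, exploiting that it changes sign as $\delta$ is varied so that $\alpha(h)$ crosses $-\pi/2$.
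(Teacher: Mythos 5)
Your argument is correct and is essentially the paper's proof in disguise: the paper adds the null term $\int_h^1(\varphi u^2/x)'\,dx$ and completes the square with the Riccati variable $\varphi=-x\phi'/\phi$ satisfying $x\varphi'-\varphi-\lambda=\varphi^2$, which is exactly the substitution of your linear ODE $-\phi''=\lambda x^{-2}\phi$, and your conditions $\phi'(1)=0$ and the vanishing of the bracket at $x=h$ coincide with the paper's $\varphi(1)=0$ and $1-\lambda+\varphi(h)\geq 0$, leading to the same equation \cref{e:eps-equation}. Your explicit verification that $\phi>0$ on $[h,1]$ (equivalently, that the tangent defining $\varphi$ has no pole there) addresses a point the paper passes over quickly, so it is a welcome addition rather than a gap.
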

\begin{proof}
    For $\lambda \in \mathbb{R}$, set
    \begin{equation}\label{e:F-functional}
        F_\lambda(u) := \int_{0}^{1} \abs{u'}^{2} - \lambda \frac{u^2}{x^2}\,dx
    \end{equation}
    and observe that
    \begin{equation}\label{e:mu-ineq-form}
        \mu_h = \max\left\{ \lambda:\; F_\lambda(u) \geq 0 \quad \forall u \in U_h\right\}.
    \end{equation}
    Since every $u \in U_h$ has the linear representation $u(x) = (x/h) u(h)$ for $x\in(0,h)$, we can rewrite
    \begin{equation*}
        F_\lambda(u) = \frac1h \left( 1 - \lambda \right) u(h)^2 + \int_h^1  \abs{u'}^{2} - \lambda\, \frac{u^2}{x^2} \; dx.
    \end{equation*}
    We now use a calibration approach to find $\lambda$ for which $F_\lambda(u)$ is nonnegative irrespective of the choice of $u \in U_h$. Such a value $\lambda$ is then a lower bound on $\mu_h$.
    
    The idea is to add to $F_\lambda(u)$ terms that sum to zero and that, at least for some carefully chosen value of $\lambda$, make the inequality $F_\lambda(u)\geq 0$ evident. To this end, observe that if $\varphi$ is any continuously differentiable function on $[h,1]$ such that $\varphi(1)=0$, then the fundamental theorem of calculus gives
    \begin{equation*}
        \int_h^1 \left( \frac{\varphi(x)}{x} u^2 \right)' \, dx + \frac1h \varphi(h) u(h)^2 = 0.
    \end{equation*}
    After expanding the derivative inside the integral using the {product and chain rules}, we can add this expression to $F_\lambda(u)$ without changing its value to obtain
    \begin{equation}\label{e:F-calibrated}
        F_\lambda(u) = 
        \frac1h \left[ 1 - \lambda + \varphi(h) \right] u(h)^2 
        + \int_h^1  \abs{u'}^{2} + 2 \frac{\varphi}{x} u u' + \left(x \varphi' - \varphi - \lambda \right) \frac{u^2}{x^2}\;dx.
    \end{equation}
    The inequality $F_\lambda(u)\geq 0$ is satisfied if we can find $\varphi$ and $\lambda$ such that
    \begin{subequations}\label{e:phi-conditions}
         \begin{align}
            1 - \lambda + \varphi(h) &\geq 0, \label{e:phi-ineq-h}\\
            x \varphi' - \varphi - \lambda &\geq \varphi^2 \quad \forall x \in [h,1], \label{e:phi-ode}\\
            \varphi(1)=0. \label{e:phi-bc}
        \end{align}
    \end{subequations}
    Indeed, in this case we have
    \begin{equation}\label{e:F-nonneg}
        F_\lambda(u) 
        \geq 
        \frac1h \underbrace{\left[ 1 - \lambda + \varphi(h) \right]}_{\geq 0} u(h)^2 
        + \int_h^1 \underbrace{ \left( u' + \frac{\varphi}{x} u\right)^2 }_{\geq 0} \,dx,
    \end{equation}
    which is manifestly nonnegative for every function $u \in U_h$. 
    
    There remains to find $\varphi$ and $\lambda$ that satisfy the three conditions in~\cref{e:phi-conditions}. Fix $\delta > 0$ to be determined below and set
    $\lambda = \frac14 + \delta^2$.
    If we require that \cref{e:phi-ode} be satisfied with equality, we obtain a differential equation with the boundary condition $\varphi(1)=0$, whose solution is given by
    \begin{equation}\label{e:phi-choice}
        \varphi(x) = \delta \tan \left[ \tan^{-1}\left(\frac{1}{2\delta}\right) + \delta \log x \right] - \frac12.
    \end{equation}
    Note that this function is smooth on $[0,h]$ for $\delta$ small enough. Then, we substitute this function into \cref{e:phi-ineq-h} and rearrange the inequality to obtain
    \begin{equation}\label{e:eps-choice-condition}
        \frac{1}{4}+\delta\tan\left(\tan^{-1}\frac{1}{2\delta} + \delta \log h  \right)-\delta^{2}\geq0.
    \end{equation}
    This inequality holds with equality when $\delta = \delta_h$ is the solution of \cref{e:eps-equation}. All conditions in~\cref{e:phi-conditions} are then satisfied with equality. We conclude that $\lambda = 1/4 + \delta_h^2$ is feasible for the maximization problem in \cref{e:mu-ineq-form}, whence $\mu_h \geq \lambda = 1/4 + \delta_h^2$.
\end{proof}

The second step is to derive an asymptotic expansion for $\delta_h$ when $h\ll 1$.

\begin{lemma}[Asymptotic expansion for $\delta_h$]
\label[lemma]{lem:1d-asympt}
    Let $\delta_h$ solve \cref{e:eps-equation}. For $h\ll 1$, we have the asymptotic expansion
    \begin{equation}\label{e:delta-asymptotic}
        \delta_h = \frac{\pi}{6+\abs{ \log h }} + o\left( \frac{1}{\abs{ \log h }}\right).
    \end{equation}
\end{lemma}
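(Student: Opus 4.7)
The plan is to invert the transcendental equation~\cref{e:eps-equation} via the tangent-addition formula, reducing it to an explicit scalar identity that can be analyzed asymptotically as $h \to 0^+$.

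\textbf{Step 1 (a priori bound).} I would first argue that $\delta_h \to 0$ as $h \to 0^+$. This uses the fact that $\delta_h$ arises in the proof of \cref{lem:1d-lower-bound} as the value for which the calibration $\varphi$ from~\cref{e:phi-choice} remains smooth on $[h,1]$. Smoothness forces the argument $\tan^{-1}(1/(2\delta_h)) + \delta_h \log x$ of $\tan$ to stay in the principal branch $(-\pi/2, \pi/2)$ for all $x\in[h,1]$, and evaluating this constraint at $x = h$ together with $\log h \to -\infty$ readily implies $\delta_h \to 0$.

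\textbf{Step 2 (inversion).} Using $\tan[\tan^{-1}(1/(2\delta_h))] = 1/(2\delta_h)$ in the tangent-addition formula and clearing denominators in~\cref{e:eps-equation} yields a linear equation in $t := \tan(\delta_h \log h)$. A direct computation gives the explicit solution
\begin{equation*}
    \tan(\delta_h \log h) = \frac{2\delta_h(3 - 4\delta_h^2)}{1 - 12\delta_h^2} = 6\delta_h + O(\delta_h^3) \quad \text{as } \delta_h \to 0.
\end{equation*}

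\textbf{Step 3 (branch selection and asymptotics).} Since $\tan^{-1}(1/(2\delta_h)) \to \pi/2^-$ and the calibration constraint places $\tan^{-1}(1/(2\delta_h)) + \delta_h \log h$ in $(-\pi/2,\pi/2)$, we have $\delta_h \log h \in (-\pi, 0)$. The positivity $\tan(\delta_h \log h) \approx 6\delta_h > 0$ from Step~2 then localizes $\delta_h \log h$ in $(-\pi, -\pi/2)$, and the fact that $\tan(\delta_h \log h) \to 0$ forces $\delta_h \log h \to -\pi$. Writing $\delta_h \log h = -\pi + \beta_h$ with $\beta_h \to 0^+$, the expansion $\tan \beta_h = \beta_h + O(\beta_h^3)$ combined with Step~2 gives
\begin{equation*}
    \beta_h = 6\delta_h + O(\delta_h^3),
\end{equation*}
which, substituting $\beta_h = \delta_h \log h + \pi$, yields $\delta_h(\abs{\log h} + 6) = \pi + O(\delta_h^3)$. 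Since $\delta_h = O(1/\abs{\log h})$, dividing through leads to the claimed expansion
\begin{equation*}
    \delta_h = \frac{\pi}{6 + \abs{\log h}} + o\!\left(\frac{1}{\abs{\log h}}\right).
\end{equation*}

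The main obstacle is the branch selection in Step~3: one must combine the a priori bound $\delta_h \to 0$ from Step~1 with the sign information from Step~2 to pin $\delta_h \log h$ near $-\pi$ rather than at some other odd multiple of $\pi/2$. Once this is done, Steps~2 and~3 together are a routine asymptotic expansion.
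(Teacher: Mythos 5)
Your proposal is correct and reaches the same conclusion by what is at heart the same strategy as the paper --- an asymptotic inversion of the transcendental equation \cref{e:eps-equation} as $\delta_h\to0$ --- but the execution differs in a way worth noting. The paper applies $\tan^{-1}$ to both sides, obtaining
\begin{equation*}
\tan^{-1}\left(\tfrac{1}{2\delta_h}\right)-\delta_h\abs{\log h}=\tan^{-1}\left(\tfrac{4\delta_h^2-1}{4\delta_h}\right),
\end{equation*}
and Taylor-expands the two arctangents (about $+\infty$ and $-\infty$ respectively), so that the identity $\pi=\delta_h\left(6+\abs{\log h}\right)+o(\delta_h)$ emerges from the $\tfrac{\pi}{2}-(-\tfrac{\pi}{2})$ gap between the two branches. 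You instead use the tangent-addition formula to solve exactly for $\tan(\delta_h\log h)=\tfrac{2\delta_h(3-4\delta_h^2)}{1-12\delta_h^2}$ (your algebra here checks out) and then run an explicit branch analysis to place $\delta_h\log h$ in $(-\pi,-\pi/2)$ and conclude $\delta_h\log h\to-\pi$. Your Step~3 is a genuine improvement in rigor: the branch selection is exactly the delicate point, and the paper handles it only implicitly through its choice of arctangent branch (indeed the paper's displayed intermediate expansion carries sign typos --- the left side should read $\tfrac{\pi}{2}-2\delta_h$ and the right side $-\tfrac{\pi}{2}+4\delta_h$ --- although the final formula \cref{e:delta-asymptotic} is correct). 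Your Step~1 a priori bound $\delta_h<\pi/\abs{\log h}$, deduced from the requirement that the calibration $\varphi$ in \cref{e:phi-choice} stay on the principal branch, is also the right way to justify the ``anticipating $\delta_h\ll1$'' step that the paper asserts without argument. In short: same idea, slightly different algebraic route, and your version closes the small gaps the paper leaves open.
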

\begin{proof}
    Rewrite \cref{e:eps-equation} as
    \begin{equation*}
        \tan^{-1}\left(\frac{1}{2\delta_h}\right) - \delta_h \abs{ \log h } = \tan^{-1} \left( \frac{4\delta_h^2 - 1}{4\delta_h}\right),
    \end{equation*}
    where we have used the identity $\log h = - \abs{\log h}$ valid for $h\leq 1$. Anticipating that $\delta_h \ll 1$ when $h\ll 1$, we can apply a Taylor expansion to find that
    \begin{equation*}
        \frac{\pi}{2} + {2\delta_h} + o(\delta_h) - \delta_h \abs{\log h} = \frac{\pi}{2} + 4\delta_h + o(\delta_h).
    \end{equation*}
    Solving for $\delta_h$ gives \cref{e:delta-asymptotic}. The correctness of this expansion is confirmed by \Cref{fig:asympt-epsilon}.
\end{proof}

\begin{figure}
    \centering
    \includegraphics[width=0.49\linewidth]{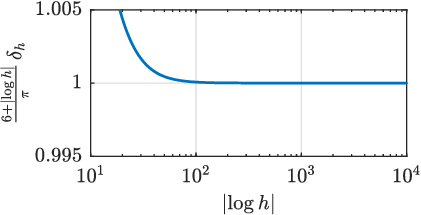}
    \hfill
    \includegraphics[width=0.49\linewidth]{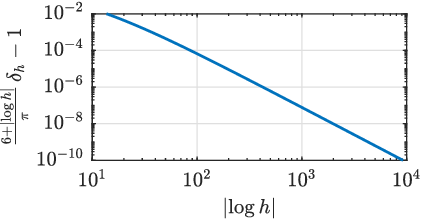}
    \caption{\emph{Left:} Ratio of $\delta_h$ to the leading-order term $\pi/(6+\abs{ \log h })$ in its asymptotic expansion. \emph{Right:} The error between this ratio and the value $1$. 
    }
    \label{fig:asympt-epsilon}
\end{figure}

The third and final step to prove \cref{th:mu-h-bounds} is to complement the lower bound on $\mu_h$ from \cref{lem:1d-lower-bound} with a matching upper bound.

\begin{lemma}\label[lemma]{lem:1d-lb-ub}
    Let $\delta_h$ satisfy \cref{e:eps-equation}. Then, $\mu_h \leq 1/4 + \delta_h^2$.
\end{lemma}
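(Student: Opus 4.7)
The plan is to establish the matching upper bound by constructing an explicit $u \in U_h$ that saturates every inequality in the calibration argument used to prove \cref{lem:1d-lower-bound}. Reading that proof backwards, the identity \cref{e:F-calibrated} with $\lambda = 1/4 + \delta_h^2$ and $\varphi$ given by \cref{e:phi-choice} collapses into $F_\lambda(u) = 0$ precisely when two conditions hold: the boundary coefficient $1 - \lambda + \varphi(h)$ vanishes, and $u$ solves the first-order ODE $u' + (\varphi/x)\,u = 0$ pointwise on $[h,1]$. The first condition is built into the very definition of $\delta_h$, since \cref{e:eps-equation} is literally $1 - \lambda + \varphi(h) = 0$ when $\lambda = 1/4 + \delta_h^2$. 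Hence the only real task is to produce an admissible $u \in U_h$ that solves this ODE on $[h,1]$.

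To do so, I would integrate $u'/u = -\varphi(x)/x$ using the substitution $t = \tan^{-1}(1/(2\delta_h)) + \delta_h \log x$, under which $dx/x = dt/\delta_h$ and the right-hand side becomes $\bigl(1/(2\delta_h) - \tan t\bigr)\,dt$. This integrates to $\tfrac12 \log x + \log\lvert\cos t\rvert$ up to an additive constant, yielding, up to an overall multiplicative constant,
$$
    u(x) = \sqrt{x}\,\cos\!\left[\tan^{-1}\!\left(\frac{1}{2\delta_h}\right) + \delta_h \log x\right]
    \quad \text{on } [h,1].
$$
I would then extend $u$ to $[0,h]$ by the linear interpolant $(x/h)\,u(h)$. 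By construction the resulting function is continuous across $x = h$, lies in $H^1(0,1)$, satisfies $u(0)=0$, is linear on $(0,h)$, and is nontrivial since $u(1) = \cos[\tan^{-1}(1/(2\delta_h))] > 0$; thus $u \in U_h$.

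Substituting this $u$ into \cref{e:F-calibrated} and using that $x\varphi' - \varphi - \lambda = \varphi^2$ holds with equality for $\varphi$ as in \cref{e:phi-choice}, one finds
$$
    F_\lambda(u) \,=\, \frac{1}{h}\bigl[1 - \lambda + \varphi(h)\bigr]\,u(h)^2 + \int_h^1 \left(u' + \frac{\varphi}{x}\,u\right)^{\!2} dx \,=\, 0,
$$
so that $\int_0^1 \lvert u'\rvert^2\,dx = (1/4 + \delta_h^2)\int_0^1 x^{-2} u^2\,dx$, and therefore $\mu_h \leq 1/4 + \delta_h^2$. I do not foresee a substantial obstacle: the calibration essentially hands us the optimal minimizer, and the only routine check is that the denominator of the Hardy ratio is finite, which follows from the fact that $x^{-2} u^2$ is the constant $u(h)^2/h^2$ on $(0,h)$ and equals $x^{-1}\cos^2(\cdot)$ on $(h,1)$. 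The mildest subtlety is simply the bookkeeping in the substitution that extracts the $\sqrt{x}\cos(\cdot)$ form of the ODE solution, but nothing beyond one-variable calculus is needed.
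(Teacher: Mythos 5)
Your proposal is correct and follows essentially the same route as the paper: both exploit that $\delta_h$ makes the boundary coefficient $1-\lambda+\varphi(h)$ vanish, solve the first-order ODE $u'+ (\varphi/x)u=0$ to obtain $u(x)=A\sqrt{x}\cos\!\left[\tan^{-1}(1/(2\delta_h))+\delta_h\log x\right]$ on $(h,1]$, and extend linearly to $[0,h]$ so that $F_\lambda(u)=0$. The resulting function coincides with the one in \cref{e:near-opt-u-1d}, so there is nothing to add.
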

\begin{proof}
    It suffices to find a function $u_h \in U_h$ such that
    \begin{equation}
        \dfrac{\displaystyle{\int_{0}^{1} \abs{u_h'}^{2}\,dx}}{\displaystyle{\int_{0}^{1}x^{-2}\abs{u_h}^{2}\,dx}} = \frac14 + \delta_h^2.
        \label{e:lb-sharpness-ub}
    \end{equation}
    Setting $\lambda= 1/4 + \delta_h^2$ to ease the notation, this is equivalent to solving the equation $F_\lambda(u_h)=0$ where the functional $F_\lambda$ is as in \cref{e:F-functional}. Since the value of $\lambda$ was chosen to satisfy the conditions in \cref{e:phi-conditions} with equality, we find from \cref{e:F-calibrated} that
    \begin{equation}\label{e:1d-square-completion-b}
        F_{\lambda}(u_h) = \int_h^1 \left( u_h' + \frac{\varphi}{x} u_h\right)^2 \, dx
    \end{equation}
    for any $u_h \in U_h$.
    We should therefore take $u_h$ to solve the differential equation
    \begin{equation}\label{e:ode-u_h}
        u_h' + \frac{\varphi}{x} u_h = 0
    \end{equation}
    on $(h,1)$, and extend it by a linear function to $(0,h)$ while ensuring that $u_h(0)=0$. The differential equation \cref{e:ode-u_h} can be solved analytically if $\varphi$ is as in \cref{e:phi-choice} with $\delta=\delta_h$, giving
    \begin{equation}\label{e:near-opt-u-1d}
        u_h(x) =\begin{dcases}
         A \, \frac{x}{\sqrt{h}} \cos\!\left( \tan^{-1}\left(\frac{1}{2\delta_h}\right) + \delta_h  \log h \right)
         &\text{for } x \in [0,h]
         \\[1ex]
        A \, \sqrt{x} \; \cos\!\left( \tan^{-1}\left(\frac{1}{2\delta_h}\right) + \delta_h  \log x \right)
         &\text{for } x \in (h,1]
        \end{dcases}
    \end{equation}
    for an arbitrary normalization constant $A$. This function satisfies \cref{e:lb-sharpness-ub} by construction for any $A\neq 0$, which is the desired result.
\end{proof}

We conclude by remarking that \cref{lem:1d-lb-ub} is not required to obtain the lower bound on $S_h$ stated in \cref{e:lb-1d-statement}: that result already follows from \cref{lem:1d-lower-bound}, \cref{lem:1d-asympt}, and inequality \cref{e:mu-def}. Nevertheless, the extra analysis is valuable because it provides functions $u_h \in U_h$ that, as $h \to 0$, form a good minimizing sequence for the minimization problem defining the Hardy constant in \cref{e:hardy-ratio-def-1d}. In the next section, we interpolate an approximation of $u_h$ to estimate the discrete Hardy constant $S_h$ from above with optimal errors.

\subsection{Proof of the upper bound}
\label{ss:ub-1d}

We now prove the upper bound in \cref{e:ub-1d-statement}. Recall that, in dimension $n=1$, the discrete Hardy constant $S_h$ is the optimal value of the optimization problem in \cref{e:discrete-hardy-min}. It therefore suffices to construct a function $v_h \in V_h$ such that
\begin{equation}\label{e:ub-1d-strategy-2}
    \frac{\displaystyle{\int_0^1 \abs{v_h'}^2 \, dx}}{\displaystyle{\int_0^1 x^{-2}\, {v_h^2}\; dx}} 
    \leq 
    \frac14 + 
    \left(\frac{\pi}{|\log h| - 3 \log |\log h|}\right)^2 +o\left(\dfrac{1}{\abs{\log h}^2} \right).
\end{equation}

We will take $v_h$ to be the piecewise linear interpolation of an element $v_\varepsilon$ of a minimizing sequence $\{v_\varepsilon\}_{\varepsilon > 0}$ for the minimization problem in \cref{e:hardy-ratio-def-1d}. The construction requires a suitable choice of $\varepsilon$ as a function of the mesh size $h$ and, most importantly, a good choice of $v_\varepsilon$. Indeed, there are many possible minimizing sequences $\{v_\varepsilon\}_{\varepsilon > 0}$ for~\cref{e:hardy-ratio-def-1d}, and not all converge at the same rate as $\varepsilon$ tends to zero. The lower bound analysis of \cref{ss:lb-1d} suggests one should define $v_\varepsilon$ by replacing $h$ with $\varepsilon$ in \cref{e:near-opt-u-1d}. To simplify the algebra in what follows, however, it will be more convenient to work with the function
\begin{equation}\label{min1}
    v_\varepsilon(x)=
    \begin{cases}
    0,
    & x\in (0,\varepsilon),\\
    \sqrt{x}\sin\left(\dfrac{\pi \log x}{\log \varepsilon}\right), 
    & x\in (\varepsilon,1),
    \end{cases}
\end{equation}
which approximates the function in \cref{e:near-opt-u-1d} for small $\varepsilon$. Crucially, this function is linear on the interval $(0,\varepsilon)$. If we choose $\varepsilon = mh$ to be an interpolation node, therefore, $v_\varepsilon$ coincides with its piecewise linear interpolation $\Pi_h v_\varepsilon$ on $(0,\varepsilon)$. On the interval $(\varepsilon,1)$, instead, we can estimate the error between $v_\varepsilon$ and $\Pi_h v_\varepsilon$ using \cref{thint1} because $v_\varepsilon \in H^2(\varepsilon,1)$. This allows us to establish \cref{e:ub-1d-strategy-2} for $v_h = \Pi_h v_\varepsilon$ and a suitable choice of $\varepsilon$.

We start by calculating the values of some norms of $v_\varepsilon$.

\begin{lemma}\label[lemma]{lem:norm-v-eps}
    For every $\varepsilon<1$, the function $v_\varepsilon$ in \cref{min1} satisfies
    \begin{gather*}
        \int_0^1 \frac{v_\varepsilon(x)^2}{x^2} \, dx 
        = \frac12 \abs{\log \varepsilon},\\
        \int_0^1 \abs{v_\varepsilon(x)'}^2 \, dx 
        = \frac12 \abs{\log \varepsilon} \left( \frac14 + \frac{\pi^2}{\abs{\log \varepsilon}^2}\right), \\
        \int_\varepsilon^1 \abs{v_\varepsilon(x)''}^2 x^2 \, dx 
        = \frac{1}{32}\abs{\log \varepsilon} + \frac{\pi^2}{4 \abs{\log \varepsilon}} + \frac{\pi^4}{2\abs{\log \varepsilon}^3}.
    \end{gather*}
\end{lemma}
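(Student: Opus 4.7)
The plan is a direct computation, organized around two simple observations: $v_\varepsilon$ and its derivatives vanish on $(0,\varepsilon)$, so every integral reduces to one over $(\varepsilon,1)$; and the substitution $t = \log x$ converts each integrand into an elementary trigonometric integral. To lighten notation I would abbreviate $\omega := \pi/\log\varepsilon$, so that $v_\varepsilon(x) = \sqrt{x}\sin(\omega \log x)$ on $(\varepsilon,1)$ and $\omega \log \varepsilon = \pi$, which is exactly the fact that makes the subsequent rescalings yield a full half-period of $\sin^2$.

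For the first identity, the integrand equals $x^{-1}\sin^2(\omega \log x)$. Setting $t = \log x$ gives $\int_{\log\varepsilon}^0 \sin^2(\omega t)\,dt$, and rescaling by $s = \omega t$ produces $\frac{\abs{\log\varepsilon}}{\pi}\int_0^\pi \sin^2 s\,ds = \frac{\abs{\log\varepsilon}}{2}$. For the second identity, I would differentiate to obtain
\begin{equation*}
    v_\varepsilon'(x) = \frac{1}{2\sqrt{x}}\sin(\omega \log x) + \frac{\omega}{\sqrt{x}}\cos(\omega \log x),
\end{equation*}
square, and integrate term by term. The $\sin^2$ and $\cos^2$ contributions are of the same type as in the first step and evaluate to $\frac{\abs{\log\varepsilon}}{8}$ and $\frac{\pi^2}{2\abs{\log\varepsilon}}$, respectively, while the cross term is proportional to $\int_\varepsilon^1 x^{-1}\sin(2\omega \log x)\,dx$, which vanishes because after the same substitution it reduces to $\int_0^{2\pi}\sin s\,ds/(2\omega) = 0$.

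The third identity rests on a pleasant cancellation. Differentiating once more, the $\cos(\omega \log x)$ contributions produced by the two terms of $v_\varepsilon'$ cancel exactly (the $x^{-3/2}\cos$ term from differentiating the $\sin$-summand is cancelled by the $x^{-3/2}\cos$ term from the $\omega x^{-1/2}$ prefactor in the second summand), leaving
\begin{equation*}
    v_\varepsilon''(x) = -\frac{1}{x^{3/2}}\left(\frac{1}{4} + \omega^2\right)\sin(\omega \log x).
\end{equation*}
Hence $x^2\abs{v_\varepsilon''}^2 = x^{-1}\left(\frac{1}{4} + \omega^2\right)^2 \sin^2(\omega \log x)$, and the first-step calculation gives $\int_\varepsilon^1 x^2\abs{v_\varepsilon''}^2\,dx = \left(\frac{1}{4} + \omega^2\right)^2 \frac{\abs{\log\varepsilon}}{2}$. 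Expanding the square and substituting $\omega^2 = \pi^2/\abs{\log\varepsilon}^2$ produces the three-term expression in the statement.

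I do not anticipate any substantive obstacle: the only genuine insight is the cancellation in $v_\varepsilon''$, which renders the second derivative structurally as simple as $v_\varepsilon$ itself (up to a power of $x$ and a constant). Everything else is elementary integration via $t = \log x$.
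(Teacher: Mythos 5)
Your computation is correct and is exactly what the paper intends: its proof of this lemma is simply ``by direct calculation,'' and your substitution $t=\log x$ together with the observed cancellation in $v_\varepsilon''$ fills in those calculations accurately (all three resulting values match the statement).
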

\begin{proof}
    By direct calculation.
\end{proof}

We will also use the following estimates, which relate a function $f\in H^1(0,1)\cap H^2(\varepsilon,1)$ that vanishes on $[0,\varepsilon]$ to its piecewise linear interpolation $\Pi_h f$ on a mesh of size $h$ when $\varepsilon=mh$ is an interpolation node. The proof is analogous to that of \cref{lem:estimates} in the appendix, so we do not report it for brevity.

\begin{lemma}\label[lemma]{lem:1d-estimates}
    Let $\varepsilon=mh\in(0,1)$ be an interpolation node. Assume $f\in H^1(0,1)\cap H^2(\varepsilon,1)$ vanishes on $[0, \varepsilon]$. Set
    \begin{equation*}
    \mathcal{E}_h(f):=\frac{h}{\varepsilon}
    \left(\int_0^1 \abs{f'}^2 \, dx\right)^{\frac{1}{2}}    \left(\int_\varepsilon^1 \abs{f''}^2 x^2 \, dx\right)^{\frac{1}{2}}+\frac{h^2}{\varepsilon^2}\int_\varepsilon^1\abs{f''}^2 x^2 \,dx.
    \end{equation*}
    There exists a constant $C > 0$, independent of $f$, $h$ and $\varepsilon$, such that
    \begin{subequations}
    \begin{align}
    \label{e:1d-first_e}
        \int_0^1 \abs{\left(\Pi_h f\right)'}^2 \, dx &\leq \int_0^1\abs{f'}^2\, dx+C \mathcal{E}_h(f)
        \\
        \label{e:1d-sec_e}
        \int_0^1 x^{-2} \abs{\Pi_h f}^2 \, dx &\geq \int_0^1x^{-2} \abs{f}^2 \, dx - C \mathcal{E}_h(f).
    \end{align}
    \end{subequations}
\end{lemma}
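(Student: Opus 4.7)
The plan is to write $\Pi_h f = f + e_h$ with $e_h := \Pi_h f - f$, expand the squared quantities on the left-hand sides of \cref{e:1d-first_e,e:1d-sec_e}, and control the resulting cross and square terms using Cauchy--Schwarz together with standard elementwise interpolation estimates. The key preliminary observation is that since $f$ vanishes on $[0,\varepsilon]$ and $\varepsilon=mh$ is an interpolation node, the function $\Pi_h f$ (and hence $e_h$) also vanishes on $[0,\varepsilon]$, so every integral involving $e_h$ reduces to one over $[\varepsilon,1]$. This reduction is what allows the singular weight $x^{-2}$ to be controlled by $\varepsilon^{-2}$.

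For \cref{e:1d-first_e}, I would write
\begin{equation*}
\int_0^1 |(\Pi_h f)'|^2 \, dx = \int_0^1 |f'|^2 \, dx + 2\int_\varepsilon^1 f' e_h' \, dx + \int_\varepsilon^1 |e_h'|^2 \, dx,
\end{equation*}
then apply Cauchy--Schwarz to the cross term and invoke the elementwise $H^2$ interpolation bound $\|e_h'\|_{L^2(x_i,x_{i+1})} \leq Ch \|f''\|_{L^2(x_i,x_{i+1})}$, summed over elements contained in $[\varepsilon,1]$, to obtain $\int_\varepsilon^1 |e_h'|^2 \, dx \leq Ch^2 \int_\varepsilon^1 |f''|^2 \, dx$. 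Using $x \geq \varepsilon$ on $[\varepsilon,1]$ to trade the unweighted integral of $|f''|^2$ for the weighted one present in $\mathcal{E}_h$ yields a factor $h^2/\varepsilon^2$ in the square term and a factor $h/\varepsilon$ (after another Cauchy--Schwarz) in the cross term. These are precisely the two pieces of $\mathcal{E}_h(f)$.

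For \cref{e:1d-sec_e}, the analogous expansion
\begin{equation*}
\int_0^1 x^{-2} |\Pi_h f|^2 \, dx = \int_0^1 x^{-2} |f|^2 \, dx + 2\int_\varepsilon^1 x^{-2} f\, e_h \, dx + \int_\varepsilon^1 x^{-2} |e_h|^2 \, dx
\end{equation*}
permits dropping the nonnegative square term. The cross term is bounded by Cauchy--Schwarz: its first factor is controlled by the classical one-dimensional Hardy inequality $\int_\varepsilon^1 x^{-2} |f|^2 \, dx \leq 4\int_0^1 |f'|^2 \, dx$ (applicable since $f$ vanishes at the origin), while the second factor uses the $L^2$ interpolation estimate $\|e_h\|_{L^2(x_i,x_{i+1})} \leq Ch^2 \|f''\|_{L^2(x_i,x_{i+1})}$ together with $x^{-2} \leq \varepsilon^{-2}$, producing the bound $\int_\varepsilon^1 x^{-2} |e_h|^2 \, dx \leq Ch^4 \varepsilon^{-4}\int_\varepsilon^1 |f''|^2 x^2 \, dx$. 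Taking a square root gives a factor $h^2/\varepsilon^2$ that can be absorbed into the $h/\varepsilon$ piece of $\mathcal{E}_h(f)$ by the elementary inequality $h^2/\varepsilon^2 \leq h/\varepsilon$, valid because $m \geq 1$.

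The main technical obstacle is purely one of bookkeeping: tracking which powers of $h/\varepsilon$ arise from the combination of elementwise interpolation estimates, the conversion $x \geq \varepsilon$ used to introduce or remove the $x^2$ weight in integrals of $|f''|^2$, and the bound $x^{-2} \leq \varepsilon^{-2}$ used in integrals of $|e_h|^2$. No fundamentally new idea beyond those already present in \cref{lem:estimates} is needed, and in particular the assumption that $\varepsilon$ is an interpolation node is essential only to ensure that $e_h \equiv 0$ on $[0,\varepsilon]$.
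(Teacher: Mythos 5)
Your proof is correct and follows essentially the same route as the paper's, which establishes this lemma as the one-dimensional analogue of \cref{lem:estimates}: expand around $f$, use that the interpolation error $\Pi_h f-f$ vanishes on $[0,\varepsilon]$ because $\varepsilon=mh$ is a node, apply Cauchy--Schwarz, and trade powers of $x\geq\varepsilon$ to recover the two pieces of $\mathcal{E}_h(f)$. The only (harmless) deviation is in bounding $\int_\varepsilon^1 x^{-2}\abs{\Pi_h f-f}^2\,dx$: the paper applies the Hardy inequality to $\Pi_h f-f$ (which vanishes at the origin) and then the $H^1$ interpolation estimate \cref{gradiente_pol}, whereas you use $x^{-2}\leq\varepsilon^{-2}$ together with the elementwise $L^2$ interpolation estimate (itself a consequence of \cref{gradiente_pol} and the Poincar\'e inequality on each element, since the error vanishes at the nodes); both yield terms absorbed by $\mathcal{E}_h(f)$.
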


We are now ready to prove that the piecewise linear function $v_h = \Pi_h v_\varepsilon$ satisfies \cref{e:ub-1d-strategy-2} when $v_\varepsilon$ is as in \cref{min1} and $\varepsilon$ is a carefully chosen interpolation node. Precisely, set $\varepsilon = mh$ for some integer $m$ to be specified below and observe that $v_\varepsilon(x) = 0$ for all $x \in [0, \varepsilon]$. Then, we can apply \cref{e:1d-first_e,e:1d-sec_e} to estimate
\begin{equation}
\label{e:proof-ub-step-1}
    \frac{\displaystyle{\int_0^1 \abs{(\Pi_h v_\varepsilon)'}^2 \, dx}}{\displaystyle{\int_0^1 x^{-2}\, {|\Pi_h v_\varepsilon|^2}\; dx}} \leq
    \frac{\displaystyle{\int_0^1 \abs{v_\varepsilon'}^2 \, dx + C \mathcal{E}_h(v_\varepsilon)}}{\displaystyle{\int_0^1 x^{-2}\, {|v_\varepsilon|^2}\; dx} - C \mathcal{E}_h(v_\varepsilon)}
\end{equation}
Using the calculations reported in \cref{lem:norm-v-eps} we find that
$\mathcal{E}_h(v_\varepsilon)\lesssim 
    \left( h \varepsilon^{-1}+h^2 \varepsilon^{-2} \right) \abs{\log \varepsilon},$ so there exist a constant $C$, different from the one in \cref{e:proof-ub-step-1} but still independent of $\varepsilon$ and $h$, such that
\begin{equation}\label{e:ub-step-2}
    \frac{\displaystyle{\int_0^1 \abs{(\Pi_h v_\varepsilon)'}^2 \, dx}}{\displaystyle{\int_0^1 x^{-2}\, {|\Pi_h v_\varepsilon|^2}\; dx}}
    \leq \frac{\dfrac14 + \dfrac{\pi^2}{\abs{\log \varepsilon}^2} + C\left(\dfrac{h^2}{\varepsilon^2}+\dfrac{h}{\varepsilon}\right)}{1 - C \left(\dfrac{h^2}{\varepsilon^2}+\dfrac{h}{\varepsilon} \right)}.
\end{equation}
Next, set $m=\lfloor |\log h|^3 \rfloor$, so $\varepsilon=mh \sim h \abs{\log h}^3$ satisfies in particular $\varepsilon \leq h \abs{\log h}^3$. With this choice we can estimate 
$$\abs{\log \varepsilon}^2 \geq  \abs{ \log\left( h \abs{\log h}^3 \right) }^2 = (|\log h| - 3 \log |\log h|)^2$$
and
\begin{equation*}
    \dfrac{h^2}{\varepsilon^2}+\dfrac{h}{\varepsilon}
    \sim \frac{1}{\abs{\log h}^6} + \frac{1}{\abs{\log h}^3}
    = o\left( \frac{1}{\abs{\log h}^2}\right).
\end{equation*}
If we substitute these estimates into \cref{e:ub-step-2} and take $h\ll 1$, so we can apply the inequality $1/(1-z) \leq 1+2z$ valid for $z\leq 1/2$, we obtain
\begin{align*}
    \frac{\displaystyle{\int_0^1 \abs{(\Pi_h v_\varepsilon)'}^2 \, dx}}{\displaystyle{\int_0^1 x^{-2}\, {|\Pi_h v_\varepsilon|^2}\; dx}}
    &\leq 
    \left[ \dfrac14 + \left(\frac{\pi}{|\log h| - 3 \log |\log h|}\right)^2 +o\left( \frac{1}{|\log h|^2}\right) \right] \left[ 1 + o\left( \frac{1}{|\log h|^2}\right)\right]
    \\
    &=
    \dfrac14 + \left(\frac{\pi}{|\log h| - 3 \log |\log h|}\right)^2  +o\left( \frac{1}{|\log h|^2} \right).
\end{align*}
This is precisely \cref{e:ub-1d-strategy-2} for $v_h = \Pi_h v_\varepsilon$, which implies the upper bound on $S_h$ claimed in~\cref{e:ub-1d-statement}.

We conclude by remarking that while the choice of $m$ could in principle be optimized, this can only improve the terms that are asymptotically smaller than $1/\abs{\log h}^2$ when $h \ll 1$. The leading-order term $\pi^2 / \abs{\log h}^2$, instead, is optimal in light of the lower bound in \cref{e:lb-1d-statement}.

\section{Proof of \texorpdfstring{\Cref{main2}}{Theorem \ref{main2}}}\label{s:3}

We now turn to proving the upper and lower bounds from \cref{main2}, which apply to the discrete Hardy constant in $n=3$ dimensions for general triangulations of the unit ball. The main difference with the arguments for $n=1$ dimensions is in the proof of the lower bound, which we present in \cref{ss:lb-3d-liviu}: rather than following a calibration argument, we exploit a Hardy inequality with a logarithmic remainder term \cite{wang2003}. We note, however, that the proof of this inequality given in \cite[Section~2.5]{soria} relies on a completion-of-the-square argument, very similar in spirit to what our calibration strategy achieves in \cref{e:F-nonneg,e:1d-square-completion-b}. We remark also that while in \cref{ss:lb-3d-liviu} we fix $n=3$, our arguments immediately generalize to any dimension $n\geq 3$ and yield
\begin{equation*}
    \min_{v \in V_h^n}
    \frac{\displaystyle\int_B \abs{\nabla v}^2 \;dx}{\displaystyle\int_B \abs{x}^{-2} \abs{v}^2 dx}
    \geq S^n + \frac{C_n}{\abs{\log (h)}^2},
\end{equation*}
where $V_h^n$ is the space of piecewise linear functions on a triangulation of the $n$-dimensional unit ball, $S^n=(n-2)^2/4$ is the Hardy constant, and the constant $C_n>0$ depends on $n$.

The upper bound part of \cref{main2}, instead, is proved in \cref{ss:ub-3d-general} with the same interpolation strategy used for $n=1$. For this we must fix $n=3$ because the finite element interpolation estimates from \cref{thint} are not valid in higher dimensions.

\subsection{Proof of the lower bound}
\label{ss:lb-3d-liviu}
We exploit the following Hardy inequality with a logarithmic remainder term. The statement, adapted from \cite[Theorem~2.5.2, p.~25]{soria}, is a particular case of general quantitative Caffarelli--Kohn--Nirenberg inequalities from \cite{wang2003}.

\begin{theorem}[See {\cite[Theorem~2.5.2]{soria}}]
    Let $R>0$ be such that $0\in \Omega\subset \overline{\Omega}\subset B_R(0)$. There exists a positive constant $K(n,R)$ such that every $\phi\in C_c^\infty(\Omega)$ satisfies
    \begin{equation}
        \label{hardy.log}
        \int_\Omega |\nabla \phi|^2 \,dx- \left(\frac{n-2}2\right)^2 \int_\Omega \abs{x}^{-2} \phi^2 \,dx
        \geq K \int_\Omega |\nabla \phi|^2\left(\log \frac{|x|}{R} \right)^{-2} \,dx.
    \end{equation}
\end{theorem}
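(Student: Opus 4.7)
The result is an improved Hardy inequality with a logarithmic gradient remainder. I would prove it with the standard combination of a ground state transformation and a one-dimensional logarithmic Hardy inequality on the radial variable.

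First, I would apply the ground state substitution $\phi(x) = |x|^{-(n-2)/2} v(x)$, which uses that $\psi(x) = |x|^{-(n-2)/2}$ satisfies $-\Delta \psi = \bigl(\tfrac{n-2}{2}\bigr)^2 |x|^{-2} \psi$ away from the origin. A routine expansion of $|\nabla \phi|^2$ and integration by parts of the cross term (with no boundary contribution, thanks to $\phi \in C_c^\infty(\Omega)$ and $v \sim |x|^{(n-2)/2} \phi$ near zero) yields the classical ground state identity
\[
\int_\Omega |\nabla \phi|^2 \, dx - \left(\tfrac{n-2}{2}\right)^2 \int_\Omega \frac{\phi^2}{|x|^2} \, dx = \int_\Omega |x|^{-(n-2)} |\nabla v|^2 \, dx,
\]
so the Hardy deficit becomes a weighted Dirichlet energy. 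Doing the same on the right-hand side of \cref{hardy.log}, with an extra integration by parts against the radial weight $L^{-2} := (\log(R/|x|))^{-2}$, reduces the target inequality to a weighted estimate in $v$ alone whose dominant contribution is
\[
\int_\Omega |x|^{-(n-2)} |\nabla v|^2 \, dx \gtrsim \int_\Omega |x|^{-n} v^2 \, L^{-2} \, dx,
\]
plus lower-order terms proportional to $L^{-2} |\nabla v|^2$ and $L^{-3} v^2$ that are easily absorbed because $L^{-2}$ is bounded above on $\overline{\Omega}$.

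To establish the reduced inequality, I would decompose $v(r\omega) = \sum_k g_k(r) Y_k(\omega)$ in spherical harmonics. The modes $k \geq 1$ carry extra positivity from the spherical Laplacian eigenvalues $\lambda_k = k(k+n-2) > 0$ and are handled trivially. The radial mode $k=0$ is the core case and reduces to the classical one-dimensional logarithmic Hardy inequality
\[
\int_0^R r\,g'(r)^2\,dr \geq \frac{1}{4} \int_0^R \frac{g(r)^2}{r\,(\log(R/r))^2}\,dr,
\]
which I would prove by the substitution $s = \log(R/r)$, turning it into the half-line Hardy inequality $\int_0^\infty h'(s)^2\,ds \geq \tfrac{1}{4} \int_0^\infty h(s)^2/s^2\,ds$.

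The main obstacle, I expect, is ensuring the constant $K$ depends only on $n$ and $R$ rather than on $\Omega$ or $\mathrm{supp}(\phi)$. This is delicate because $L^{-2}$ blows up near $\partial B_R(0)$; the uniformity is saved by the hypothesis $\overline{\Omega} \subset B_R(0)$, which yields an a priori bound $L^{-2} \leq (\log(R/\sup_{\overline{\Omega}}|x|))^{-2}$ independent of $\phi$. One must also carefully justify the two boundary-free integrations by parts, both at $\partial \Omega$ (using compact support of $\phi$) and at the origin (using smoothness of $\phi$ together with the power weight $|x|^{-(n-2)/2}$ in the definition of $v$).
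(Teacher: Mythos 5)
The paper does not actually prove this statement: it imports it from \cite{soria} (as a special case of the quantitative Caffarelli--Kohn--Nirenberg inequalities of \cite{wang2003}), remarking only that the cited proof ``relies on a completion-of-the-square argument.'' Your ground-state substitution $\phi=|x|^{-(n-2)/2}v$ is exactly that completion of the square in disguise, since the resulting identity can be rewritten as $\int_\Omega|\nabla\phi+\tfrac{n-2}{2}\tfrac{x}{|x|^2}\phi|^2\,dx=\int_\Omega|x|^{-(n-2)}|\nabla v|^2\,dx$; so your route coincides in spirit with the source. The remaining steps are sound: expanding $|\nabla\phi|^2(\log(|x|/R))^{-2}$ and using that $L^{-2}$ is bounded on $\overline\Omega$ reduces everything to $\int_\Omega|x|^{-(n-2)}|\nabla v|^2\,dx\gtrsim\int_\Omega|x|^{-n}v^2L^{-2}\,dx$, the spherical-harmonic modes $k\geq1$ are absorbed via $\lambda_k>0$ and the boundedness of $L^{-2}$, and the radial mode is the one-dimensional logarithmic Hardy inequality, correctly obtained from the half-line Hardy inequality via $s=\log(R/r)$ (the required condition $h(0)=0$ holds because $v$ vanishes near $|x|=R$).

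The one point you identify but do not fully resolve is also the one genuine caveat: the constant your argument produces depends on $n$ and on $\log\bigl(R/\sup_{\overline\Omega}|x|\bigr)$, hence on $\Omega$, not only on $(n,R)$ as the statement's notation $K(n,R)$ suggests. This dependence is unavoidable: for $\phi$ supported in a thin annulus $\{R-\delta<|x|<R-\delta/2\}$ the left-hand side is at most $\int|\nabla\phi|^2$, while the weight satisfies $(\log(|x|/R))^{-2}\gtrsim R^2/\delta^2$ there, forcing $K\lesssim\delta^2/R^2$. So no constant uniform over all $\Omega$ with $\overline\Omega\subset B_R$ can work, and the correct reading is that $K$ depends on $\operatorname{dist}(\overline\Omega,\partial B_R)$ (or one normalizes $R$ relative to $\sup_{\overline\Omega}|x|$). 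This is harmless for the paper, which applies the inequality only with $\Omega=B_1$ and $R=2$, where $\log(R/|x|)\geq\log 2$ on $\Omega$; but you should state the constant's true dependence rather than leave it at ``independent of $\phi$.''
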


By density, this inequality holds for all $\phi\in H_0^1(\Omega)$. Because of the definition of space $V_h^3$ in  \Cref{elementi}, we can take $R=2$ and apply the above inequality to any function in $V_h^3$.

In particular, let $v_h\in V_h^3$ be the function attaining the minimum in the definition of $S^3_h$, that is,
\[
S^3_h=\dfrac{\displaystyle{\int_{B} |\nabla v_h|^2 \, dx}}{\displaystyle{\int_{B} \abs{x}^{-2} \, v_h^2\; dx}}.
\]
Using inequality \cref{hardy.log} for $n=3$ we obtain
\begin{align*}
S^3_h - \frac14
&=\dfrac{\displaystyle{\int_{B} |\nabla v_h|^2 \, dx}- \frac14\int_{B} \abs{x}^{-2} \, v_h^2\; dx}{\displaystyle{\int_{B} \abs{x}^{-2} \, v_h^2\; dx}}
\\
&\ge K \; \dfrac{\displaystyle{\int_{B}  {|\nabla v_h|^2}{\left(\log \tfrac{|x|}{2} \right)^{-2} \, dx}}}{\displaystyle{\int_{B} \abs{x}^{-2} \, v_h^2\; dx}}
= K \, \dfrac{\displaystyle{\sum _{T\in \mathcal{T}_h}\int_{T}  {|\nabla v_h|^2}{\left(\log \tfrac{|x|}{2} \right)^{-2}} \, dx}}{\displaystyle{\int_{B} \abs{x}^{-2} \, v_h^2\; dx}} \; .
\end{align*}
We claim that there exists a constant $C_1>0$ such that
 \begin{align}
         \sum _{T\in \mathcal{T}_h}\int_{T}  |\nabla v_h|^2 \left(\log \tfrac{|x|}{2} \right)^{-2} \, dx
         &\ge   \frac{C_1}{|\log h|^2}\sum _{T\in \mathcal{T}_h}\int_{T}  |\nabla v_h|^2\, dx
         \nonumber \\
         &=\frac{C_1}{|\log h|^2}\int_B |\nabla v_h|^2\, dx.
         \label{claim-liviu}
\end{align}
Then, upon setting $C=K C_1$ and using \cref{e:hardy-ratio-def-3d} for $n=3$ and $p=2$, we obtain
\begin{equation*}
S^3_h-\frac14
\ge  \frac C{|\log h|^2}\dfrac{\displaystyle{\int_{B} |\nabla v_h|^2 \, dx}}{\displaystyle{\int_{B} \abs{x}^{-2} \, v_h^2\; dx}}
\geq \frac{C}{4|\log h|^2},
\end{equation*}
which immediately implies the lower bound in \Cref{main2}.

There remains to prove \cref{claim-liviu}. 
Let $B_h$ be the ball of radius $h$ centered at the origin and observe that we can write $\mathcal{T}_h= \mathcal{T}_h^1 \cup \mathcal{T}_h^2$, where
\begin{align*}
    \mathcal{T}_h^1 &:= \{T\in \mathcal{T}_h \text{ such that } T\cap B_h= \emptyset\}\\
    \mathcal{T}_h^2 &:= \{T\in \mathcal{T}_h \text{ such that } T\cap B_h\neq \emptyset\}.
\end{align*}
For any $T\in \mathcal{T}_h^1$,  $x\in T$ implies $|x|\ge h$, so
\begin{equation}
    \label{es:th1}
    \int_{T}\frac{|\nabla v_h|^2}{|\log (|x|/2)|^2}\, dx\ge \frac{1}{\abs{\log (h/2)}^2} \int_T |\nabla v_h|^2\,dx \qquad \forall T \in \mathcal{T}_h^1.
\end{equation}
On the other hand, if $T\in \mathcal{T}_h^2$, we can write
\begin{equation*}
    \begin{aligned}
        \sum _{T\in \mathcal{T}_h^2}\int_{T}  \frac{|\nabla v_h|^2}{|\log (|x|/2)|^2} \, dx
        =& 
        \sum _{T\in \mathcal{T}_h^2} \left[
        \int_{T\cap B_h}  \frac{|\nabla v_h|^2}{|\log (|x|/2)|^2} \, dx + 
        \int_{T\setminus B_h}  \frac{|\nabla v_h|^2}{|\log (|x|/2)|^2} \, dx
        \right]
        \\
        =&  
        \int_{B_h} \frac{|\nabla v_h|^2}{|\log (|x|/2)|^2} \, dx + 
        \sum _{T\in \mathcal{T}_h^2} \int_{T\setminus B_h}  \frac{|\nabla v_h|^2}{|\log (|x|/2)|^2} \, dx.
    \end{aligned}
\end{equation*}
The integrals over $T\setminus B_h$ can be estimated as before because $x\in T\setminus B_h$ implies $\abs{x}\ge h$, so
\begin{equation}\label{es:out:bh}
\sum _{T\in \mathcal{T}_h^2}\int_{T\setminus B_h}  \frac{|\nabla v_h|^2}{|\log (|x|/2)|^2} \, dx\ge \frac{1}{\abs{\log (h/2)}^2} \sum _{T\in \mathcal{T}_h^2} \int_{T\setminus B_h}  |\nabla v_h|^2 \, dx. 
\end{equation}
As far as the  integral over $B_h$,  we apply the coarea formula and integrate by parts to obtain
\begin{align*}
    \int_{B_h} \frac{|\nabla v_h|^2}{|\log (|x|/2)|^2} \, dx
    &= \int_0^h \frac{1}{|\log (s/2)|^2}\left(\int_{\partial B_s} |\nabla v_h|^2 \, d\mathcal{H}^2\right)\, ds\\
    &= \frac{1}{\abs{\log (h/2)}^2} \int_{B_h}  |\nabla v_h|^2 \, dx - 2\int_0^h \left(\int_{B_s}\abs{\nabla v_h}^2 \, dx\right) \frac{1}{\abs{\log(s/2)}^3}\, ds
    \\
    &\geq \frac{1}{\abs{\log (h/2)}^2} \int_{B_h}  |\nabla v_h|^2 \, dx - 2 \int_{B_h}\abs{\nabla v_h}^2 \, dx \, \int_0^h \frac{1}{\abs{\log(s/2)}^3} \, ds
    \\
    &= \frac{1}{\abs{\log (h/2)}^2} \left[1  - 2 \int_0^h \frac{\abs{\log (h/2)}^2}{\abs{\log(s/2)}^3} \, ds \right] \int_{B_h}  |\nabla v_h|^2 \, dx .
\end{align*}
Now, since
\begin{equation*}
    \lim_{h\to 0} \int_0^h \frac{\abs{\log (h/2)}^2}{\abs{\log(s/2)}^3} \, ds = 0
\end{equation*}
there exists a positive constant $C_2 < 1$ such that
\begin{equation}\label{es:bh}
    \int_{B_h} \frac{|\nabla v_h|^2}{|\log (|x|/2)|^2} \, dx\ge  \frac{C_2}{\abs{\log (h/2)}^2} \int_{B_h}  |\nabla v_h|^2 \, dx
\end{equation}
for all sufficiently small $h$. We can now sum up the estimates \cref{es:th1}, \cref{es:out:bh} and \cref{es:bh} to arrive at the claimed inequality \cref{claim-liviu}. The lower bound in \Cref{main2} is therefore proved.

\subsection{Proof of the upper bound}
\label{ss:ub-3d-general}
To prove the upper bound from \cref{main2}, it suffices to find a function $v_h \in V_h^3$ such that
\begin{equation}\label{e:ub-3d-strategy}
    \dfrac{\displaystyle\int_B \abs{\nabla v_h}^2 dx}{\displaystyle\int_B \abs{x}^{-2} \abs{v_h}^2 dx} \leq \dfrac{1}{4}+\dfrac{4\pi^2}{\abs{\log h}^2}+o\left(\dfrac{1}{\abs{\log h}^2}\right).
\end{equation}
As in \cref{ss:ub-1d}, we will take $v_h = \Pi_h v_\varepsilon$ to be the piecewise linear interpolation of a function $v_\varepsilon$ that is close to attaining the minimum in \cref{e:hardy-ratio-def-3d}, where $\varepsilon$ is a small parameter to be determined as a function of the mesh size $h$. We make here the particular choice
\begin{equation}\label{v_epsilon_n=3}
    v_\varepsilon(x)=
    \dfrac{1}{\sqrt{\abs{x}+\varepsilon}}\sin\left(\frac{\pi \log(\abs{x}+\varepsilon)}{\log \varepsilon}\right)
    - \dfrac{1}{\sqrt{1+\varepsilon}}\sin\left(\frac{\pi \log(1+\varepsilon)}{\log \varepsilon}\right).
\end{equation}
The following result follows from direct calculations.

\begin{lemma}\label[lemma]{estimates_n=3}
    For every $\varepsilon>0$, the function $v_\varepsilon$ defined in \cref{v_epsilon_n=3} belongs to $H^1_0(B) \cap H^2(B)$. In particular, for  $\varepsilon \ll 1$ we have
    \begin{equation}
      \begin{aligned}
            \frac{1}{\abs{B}}\int_B \abs{x}^{-2}\abs{v_\varepsilon(x)}^2dx&=\dfrac{\abs{\log \varepsilon}}{2}+O\left(\dfrac{\varepsilon}{\abs{\log \varepsilon}}\right)\\
            \frac{1}{\abs{B}}\int_B \abs{\nabla v_\varepsilon}^2 dx  & \leq \dfrac{\abs{\log \varepsilon}}{2}\left(\dfrac{1}{4}+\dfrac{\pi^2}{\abs{\log \varepsilon}^2}\right)+O\left(\varepsilon\right)\\
            \frac{1}{\abs{B}}\int_B\left(\abs{x}+\varepsilon\right)^2\abs{D^2 v_\varepsilon(x)}^2dx&\leq \dfrac{9}{32}\abs{\log \varepsilon}+O\left(\dfrac{\varepsilon}{\log \varepsilon}\right).
        \end{aligned}
   \end{equation}
\end{lemma}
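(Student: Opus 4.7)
My approach relies on reducing the three integrals in the statement to weighted one-dimensional integrals via the radial symmetry of $v_\varepsilon$. Writing $v_\varepsilon(x)=g(\abs{x})$ with
\[
g(r)=\frac{\sin\Phi(r)}{\sqrt{r+\varepsilon}}-u(1), \qquad
\Phi(r):=\frac{\pi\log(r+\varepsilon)}{\log\varepsilon}, \qquad
u(1):=\frac{\sin\Phi(1)}{\sqrt{1+\varepsilon}},
\]
the membership $v_\varepsilon\in H^1_0(B)\cap H^2(B)$ follows immediately: the profile $g$ is $C^\infty$ on $[0,1]$ because $r+\varepsilon$ is uniformly bounded away from zero, and $g(1)=0$ by construction; boundedness of $g'$ and $g''$ then gives the $H^2$-regularity via the standard identities $\abs{\nabla v_\varepsilon}^2=(g'(r))^2$ and $\abs{D^2 v_\varepsilon}^2=(g''(r))^2+2(g'(r)/r)^2$, valid for radial functions in $\R^3$, together with the reduction of each relevant integral to a one-dimensional integral over $(0,1)$ with weight $r^2$.

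To evaluate the three integrals asymptotically, the plan is to perform two changes of variable in succession. First, setting $s:=r+\varepsilon$ turns the integrands into simple expressions in $s$ by virtue of the explicit formula
\[
g'(r)=(r+\varepsilon)^{-3/2}\left[-\tfrac12\sin\Phi+\frac{\pi}{\log\varepsilon}\cos\Phi\right]
\]
and a similar (longer) formula for $g''(r)$ of the form $(r+\varepsilon)^{-5/2}[a\sin\Phi+b\cos\Phi]$, where $a,b$ depend only on $\log\varepsilon$. Second, setting $t:=\pi\log s/\log\varepsilon$ converts $ds/s$ into $\pi^{-1}\abs{\log\varepsilon}\,dt$ and sends $s\in(\varepsilon,1)$ to $t\in(0,\pi)$, so that the leading-order part of each integral reduces to the elementary identities $\int_0^\pi\sin^2 t\,dt=\int_0^\pi\cos^2 t\,dt=\pi/2$ and $\int_0^\pi\sin(2t)\,dt=0$. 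The common prefactor $\abs{\log\varepsilon}/2$ in all three estimates emerges from the Jacobian of the second substitution, while the numerical constants $1/4$, $\pi^2/\abs{\log\varepsilon}^2$, and $9/32$ are produced by these three elementary trigonometric integrals applied to the squared expressions for $g'$ and $g''$.

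The main obstacle is controlling the error terms, which come from three sources: (i) the discrepancy between $r^k$ and $(r+\varepsilon)^k$, which enters when the Jacobian $r^2$ from spherical coordinates is rewritten in the variable $s$; (ii) the piece of each integral on the tail interval $s\in(1,1+\varepsilon)$ left after the first substitution; and (iii) the cross-terms introduced by the constant $u(1)$ subtracted in the definition of $g$. Source (i) does not affect the first estimate, where the weight $\abs{x}^{-2}$ cancels exactly with the Jacobian, allowing the substitution to be performed without any approximation. For the other two estimates, which are only required as upper bounds, the inequality $r\leq r+\varepsilon$ replaces the Jacobian weight $r^2$ by $(r+\varepsilon)^2$ and yields clean functions of $s$ alone, up to a residual $(r+\varepsilon)^2-r^2=\varepsilon(2r+\varepsilon)$ whose integral against the relevant weight can be shown to be asymptotically negligible using the rapid decay of $\sin\Phi$ near $s=\varepsilon$. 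The tail contributions in (ii) are easy to estimate, since $\Phi(s)=O(\varepsilon/\abs{\log\varepsilon})$ on $(1,1+\varepsilon)$ and therefore $\sin\Phi$ and related quantities are small there. Finally, the cross-terms in (iii) are controlled by $\abs{u(1)}=O(\varepsilon/\abs{\log\varepsilon})$, which follows from the Taylor expansion of $\sin(\pi\log(1+\varepsilon)/\log\varepsilon)$. Summing these contributions would yield the remainder terms recorded in the lemma.
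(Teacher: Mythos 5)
Your overall strategy---reduce each integral to a weighted one-dimensional integral by radial symmetry, substitute $s=r+\varepsilon$ and then $t=\pi\log s/\log\varepsilon$, evaluate the leading order via $\int_0^\pi\sin^2t\,dt=\int_0^\pi\cos^2t\,dt=\pi/2$ and $\int_0^\pi\sin(2t)\,dt=0$, and control the three error sources (Jacobian mismatch $r^2$ versus $(r+\varepsilon)^2$, the tail $s\in(1,1+\varepsilon)$, and the cross terms with the subtracted constant, which is indeed $O(\varepsilon/\abs{\log\varepsilon})$)---is exactly the ``direct calculation'' the paper has in mind; the paper records no proof beyond that phrase, and your formula for $g'$ and your error bookkeeping are correct.

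There is, however, a concrete point where carrying out your plan does not deliver the stated right-hand sides, and you should confront it rather than assert that the constants ``are produced'' by the trigonometric integrals. For the third estimate you correctly write $\abs{D^2v_\varepsilon}^2=(g'')^2+2(g'/r)^2$, but you then attribute the constant $9/32$ entirely to the $(g'')^2$ part. The angular part is not lower order: $2(r+\varepsilon)^2(g'/r)^2\,r^2=2(r+\varepsilon)^2(g')^2$, whose leading contribution is $2\cdot\tfrac14\cdot\tfrac{\abs{\log\varepsilon}}{2}=\tfrac{\abs{\log\varepsilon}}{4}$ per unit of angular measure, so your computation yields $\tfrac{9}{32}\abs{\log\varepsilon}+\tfrac{8}{32}\abs{\log\varepsilon}=\tfrac{17}{32}\abs{\log\varepsilon}$ at leading order; the stated $9/32$ corresponds to keeping only the radial--radial component of the Hessian. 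Relatedly, the spherical Jacobian contributes a factor $4\pi$ while $\abs{B}=4\pi/3$, so all three normalized integrals come out three times larger than the one-dimensional integrals (the first one, stated as an equality, evaluates to $\tfrac32\abs{\log\varepsilon}$ rather than $\tfrac12\abs{\log\varepsilon}$). These appear to be slips in the lemma's statement rather than in your method, and they are harmless downstream since only the order $\abs{\log\varepsilon}$ and the ratio of the first two integrals are used; but a complete proof must either reproduce the stated constants or flag the correction. Finally, a minor regularity point: since $g'(0)=\pi\varepsilon^{-3/2}/\abs{\log\varepsilon}\neq 0$, the function $v_\varepsilon$ is Lipschitz but not $C^1$ at the origin, so ``boundedness of $g'$ and $g''$'' does not by itself give $H^2(B)$; you need the standard observation that a point has zero capacity in $\R^3$, so the pointwise Hessian---which lies in $L^2(B)$ precisely because $\int_0^1(g')^2\,dr<\infty$ absorbs the $2(g'/r)^2$ singularity---coincides with the distributional one.
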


We will also use the following estimates, which are similar to those in \cref{lem:norm-v-eps} and \cref{lem:norm-v-eps-3d}. The proof is similar to that of \cref{lem:norm-v-eps-3d}, which is reported in the appendix, except that one must use the finite element interpolation estimates from \cref{thint} instead of those in \cref{thint1}. The details are omitted for brevity.

\begin{lemma}\label[lemma]{first_lemma_dim3}
    For every mesh size $h$ and every function $f\in H^1_0(B)\cap H^2(B)$, set 
    \begin{equation*}\label{error_h}
      \mathcal{E}_h(f):= h^2 \norma{f}^2_{H^2(B)} +h \norma{f}_{H^2(B)}\norma{\nabla f}_{L^2(B)}. 
    \end{equation*}
    There exists a constant $C>0$, independent of both $f$ and $h$, such that 
    \begin{equation}
        \begin{aligned}
        \int_{B}\abs{\nabla \Pi_h f}^2 dx &\leq  \int_{B}\abs{\nabla f}^2 dx + C \mathcal{E}_h(f),\\
        \int_{B}\abs{x}^{-2} \abs{ \Pi_h f}^2 dx &\geq  \int_{B}\abs{x}^{-2} \abs{ f}^2 dx - C \mathcal{E}_h(f),
    \end{aligned}
    \end{equation}
\end{lemma}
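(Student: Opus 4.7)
The plan is to prove both bounds by a direct comparison between $\Pi_h f$ and $f$, combining the interpolation estimate \cref{gradiente} from \cref{thint} with the continuous Hardy inequality \cref{hardy}. Both estimates will follow from expanding an $L^2$-type norm as a square and applying Cauchy--Schwarz to the resulting cross term.

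For the first bound, I would start from the identity $\nabla \Pi_h f = \nabla f + \nabla(\Pi_h f - f)$ and expand
\begin{equation*}
    \int_B \abs{\nabla \Pi_h f}^2 \, dx = \int_B \abs{\nabla f}^2 \, dx + 2\int_B \nabla f \cdot \nabla(\Pi_h f - f) \, dx + \int_B \abs{\nabla(\Pi_h f - f)}^2 \, dx.
\end{equation*}
Applying Cauchy--Schwarz to the cross term and then the estimate $\norma{\nabla(\Pi_h f - f)}_{L^2(B)} \leq C h \norma{f}_{H^2(B)}$, the cross term is controlled by a constant multiple of $h \norma{\nabla f}_{L^2(B)} \norma{f}_{H^2(B)}$ while the last term is bounded by a constant multiple of $h^2 \norma{f}^2_{H^2(B)}$. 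These are precisely the two contributions composing $\mathcal{E}_h(f)$.

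For the second bound, the same algebraic manipulation gives
\begin{equation*}
    \int_B \abs{x}^{-2} \abs{\Pi_h f}^2 \, dx \geq \int_B \abs{x}^{-2} \abs{f}^2 \, dx - 2 \, \norma{\abs{x}^{-1} f}_{L^2(B)} \, \norma{\abs{x}^{-1} (\Pi_h f - f)}_{L^2(B)}.
\end{equation*}
The key observation here is that $\Pi_h f - f \in H^1_0(B)$, because $V_h^3$ is a subspace of $H^1_0(B)$ by construction. I would then invoke the continuous Hardy inequality \cref{hardy} for $n=3$ and $p=2$ to deduce both $\norma{\abs{x}^{-1} f}_{L^2(B)} \leq 2 \norma{\nabla f}_{L^2(B)}$ and $\norma{\abs{x}^{-1}(\Pi_h f - f)}_{L^2(B)} \leq 2 \norma{\nabla(\Pi_h f - f)}_{L^2(B)}$. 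A further application of \cref{thint} to the second factor yields a contribution of order $h \norma{\nabla f}_{L^2(B)} \norma{f}_{H^2(B)}$, which is again absorbed by $\mathcal{E}_h(f)$.

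The main technical point, rather than a genuine obstacle, is verifying that the Hardy inequality legitimately applies to $\Pi_h f - f$; this is guaranteed by the design of $V_h^3$, which enforces vanishing traces on $\partial B$ so that the interpolant and its difference with $f$ remain in $H^1_0(B)$. Note that the second bound only requires the $h \norma{\nabla f}_{L^2} \norma{f}_{H^2}$ component of $\mathcal{E}_h(f)$, whereas the first uses both; packaging the two terms into a single quantity $\mathcal{E}_h(f)$ merely yields a cleaner unified statement at no cost.
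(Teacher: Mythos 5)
Your proposal is correct and follows essentially the same route as the paper: the authors omit this proof but state it is analogous to that of \cref{lem:estimates} in the appendix, which likewise expands the square, applies Cauchy--Schwarz to the cross term, controls $\nabla(\Pi_h f - f)$ via the interpolation estimate of \cref{thint}, and uses the Hardy inequality on $f$ and on $\Pi_h f - f \in H^1_0(B)$ to handle the singular weight $\abs{x}^{-2}$. Your observation that the quadratic error term can simply be dropped in the lower bound (rather than estimated, as the paper does) is a harmless minor simplification.
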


With these results in hand, it is relatively straightforward to show that the function $v_h = \Pi_h v_\varepsilon \in V_h^3$ satisfies~\cref{e:ub-3d-strategy} for a suitable choice of $\varepsilon=\varepsilon(h)$. Indeed, by \Cref{estimates_n=3}, for every $\varepsilon>0$ we can estimate
\begin{equation*}
    \begin{aligned}
\norma{v_\varepsilon}^2_{H^2(B)}&\leq \int_B \abs{x}^{-2}\abs{v_\varepsilon}^2 dx+ \int_B \abs{\nabla v_\varepsilon}^2 dx +\varepsilon^{-2}\int_B \left(\abs{x}+\varepsilon\right)^2 \abs{D^2 v_\varepsilon}^2 dx \\
& \leq \dfrac{\abs{\log \varepsilon}}{\varepsilon^2}+O\left(\dfrac{1}{\varepsilon\abs{\log \varepsilon}}\right).
    \end{aligned}
\end{equation*}
This, in turn, implies that $\mathcal{E}_h(v_\varepsilon) \leq \left( h^2 \varepsilon^{-2}+ h \varepsilon^{-1} \right)\abs{\log \varepsilon}$.
Using \cref{estimates_n=3}, \cref{first_lemma_dim3}, and this last estimate we then find
\begin{align*}
        \dfrac{\displaystyle\int_B \abs{\nabla \Pi_h v_\varepsilon}^2 dx}{\displaystyle\int_B \abs{x}^{-2} \abs{\Pi_h v_\varepsilon}^2 dx}
        & \leq \dfrac{\displaystyle\int_B \abs{\nabla  v_\varepsilon}^2 dx+ C\mathcal{E}_h(v_\varepsilon)}{\displaystyle\int_B \abs{x}^{-2} \abs{ v_\varepsilon}^2 dx-C \mathcal{E}_h(v_\varepsilon)}\\
        & \leq \dfrac{\displaystyle\frac{1}{4}+\frac{\pi^2}{\abs{\log \varepsilon}^2}+O\left(\frac{\varepsilon}{\abs{\log \varepsilon}}\right)+C\left(\frac{h^2}{\varepsilon^2}+\frac{h}{\varepsilon}\right)}{\displaystyle 1+O\left(\frac{\varepsilon}{\abs{\log \varepsilon}^2}\right)-C\left(\frac{h^2}{\varepsilon^2}+\frac{h}{\varepsilon}\right) }.
\end{align*}
We now fix $\varepsilon=h \abs{\log h}^3$ and obtain 
\begin{equation*}
    \dfrac{\displaystyle\int_B \abs{\nabla \Pi_h v_\varepsilon}^2 dx}{\displaystyle\int_B \abs{x}^{-2} \abs{\Pi_h v_\varepsilon}^2 dx}
    \leq 
    \dfrac{\displaystyle\frac{1}{4}+
    \left( \frac{\pi}{\abs{\log h} - 3 \log\abs{\log h}} \right)^2+O\left( \frac{1}{\abs{\log h}^3} + h \abs{\log h}^2 \right)}{\displaystyle 1+O\left( \frac{1}{\abs{\log h}^3} + h \abs{\log h}^2 \right)}.
\end{equation*}
Since $\abs{\log h}^{-3} + h \abs{\log h}^2=o( \abs{\log h}^{-2})$, this inequality implies \cref{e:ub-3d-strategy} for all sufficiently small $h$ values. This concludes the proof of the upper bound part of \cref{main2}.

\section{Numerical results}\label{s:numerics}

Our analytical estimates in \Cref{main,,main-3d-spherical,,main2} provide precise asymptotic rates of convergence for the discrete Hardy constants in dimension $n=1$ and $n\geq 3$ as the mesh size $h$ tends to zero. This section reports some computational results to validate our estimates and check if they are quantitatively accurate for small but finite $h$, rather than just asymptotically.

\begin{figure}
    \centering
    \includegraphics[width=0.49\linewidth]{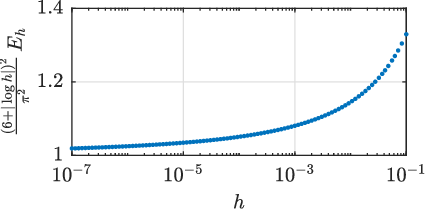}
    \hfill
    \includegraphics[width=0.49\linewidth]{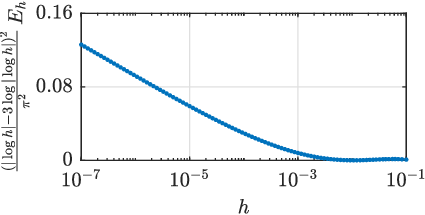}
    \vspace{-1ex}
    \caption{
        The gap $E_h = S_{h} - 1/4$ between the discrete and exact Hardy constant in dimension $n=1$, scaled by the factors $\pi^2/(6+\abs{\log h})^2$ (left) and $\pi^2/ ( \abs{\log h} - 3 \log \abs{\log h} )^2$ (right) predicted by the lower and upper bounds in \cref{e:lb-1d-statement,e:ub-1d-statement}, respectively.
        \label{err1}
    }
\end{figure}

\begin{figure}
    \centering
    \includegraphics[width=0.49\linewidth]{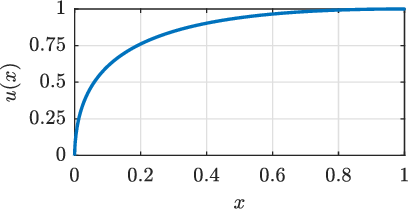}
    \hfill
    \includegraphics[width=0.49\linewidth]{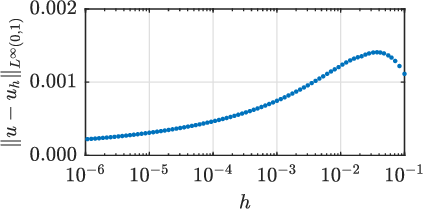}
    \vspace{-1ex}
    \caption{
        \emph{Left:} Minimizer $u \in V_h$ for~\cref{e:discrete-hardy-min} and $h=10^{-3}$, normalized so that $u(1)=1$.
        \emph{Right:} Error $\|u-u_h\|_{L^\infty(0,1)}$ between the minimizer $u$ of~\cref{e:discrete-hardy-min} and the function $u_h$ in~\cref{e:near-opt-u-1d}. Both functions are normalized so that $u(1)=u_h(1)=1$.
        \label{f:ev-1d}
    }
\end{figure}

\subsection{Computations for \texorpdfstring{$n=1$}{n=1}}
For the case of dimension $n=1$, the implementation of the finite element method is straightforward and the computation of the discrete Hardy constant $S_h$ amounts to solving a tridiagonal generalized eigenvalue problem. We solved this eigenvalue problem for uniform meshes with $N$ equispaced interpolation nodes, $x_k = k/N$ for $k=0,\ldots,N$. The mesh size is $h=1/N$. We considered 100 logarithmically spaced integer values $N$ from $N=10$ to $N=10^7$. The gap
\begin{equation*}
    E_h := S_{h} - \frac14
\end{equation*}
is plotted as a function of the mesh size in the two panels of \Cref{err1}, where it is compensated by the values $\pi^2/(6+\abs{\log h})^2$ and $\pi^2/ ( \abs{\log h} - 3 \log \abs{\log h} )^2$ that one predicts (up to higher-order corrections) from the lower and upper bounds in \cref{e:lb-1d-statement,e:ub-1d-statement}, respectively. We use these values instead of the simpler asymptotic predictions from \cref{main} because we expect them to be more precise for finite $h$ values. The left panel in \Cref{err1} suggests that the upper bound \cref{e:ub-1d-statement} on $S_{h}$ overestimates $E_h$. Note also how the $O(1/\abs{\log h}^2)$ asymptotic behavior of $E_h$, guaranteed by \cref{main}, is not evident in the plot despite the very small mesh sizes. This is due to the extremely slow decay of other, higher-order logarithmic corrections. In contrast, the lower bound from \cref{e:lb-1d-statement} predicts $E_h$ much more accurately for the mesh sizes $h$ in our numerical computations, even though it was obtained by replacing the finite element space $V_h$ with the strictly larger space $U_h$. 

The accuracy of our lower bound analysis is further confirmed if we consider the minimizer $u \in V_h$ for~\cref{e:discrete-hardy-min}. This is the principal eigenfunction of the eigenvalue problem for $S_h$ and is shown in the left panel of \Cref{f:ev-1d} for $h=10^{-3}$ (results for other values of $h$ are similar). As shown by the right panel in the same figure, this eigenfunction is approximated well by the function $u_h$ in~\cref{e:near-opt-u-1d} in a pointwise sense. Moreover, the approximation appears to improve as the mesh size $h$ decreases. This suggests that a careful interpolation of $u_h$ may improve the upper bound in \cref{e:ub-1d-statement} so that it matches more precisely the lower bound in~\cref{e:lb-1d-statement}.

\begin{figure}[t]
    \centering
    \includegraphics[width=0.49\linewidth]{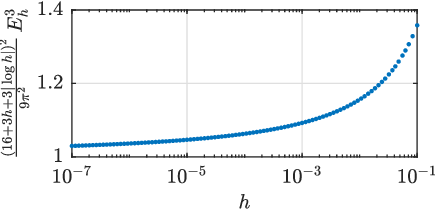}
    \hfill
    \includegraphics[width=0.49\linewidth]{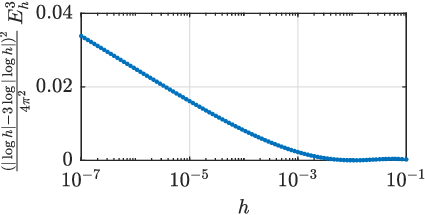}
    \caption{
        The gap $E_h^3 = S_{h}^3 - 1/4$ between the discrete Hardy constant for $n=3$ dimensions, computed with \cref{Snh}, and the exact value $S^3 = 1/4$. Results are plotted after scaling by the functions $9\pi^2/(16 + 3h + 3\abs{\log h})^2$ (right) and $4\pi^2/ ( \abs{\log h} - 3 \log \abs{\log h} )^2$ (right) predicted by the lower and upper bounds in \cref{e:lb-3d-statement,e:ub-3d-statement}, respectively.
        \label{err2}
    }
\end{figure}

\begin{figure}[t]
    \centering
    \includegraphics[width=0.49\linewidth]{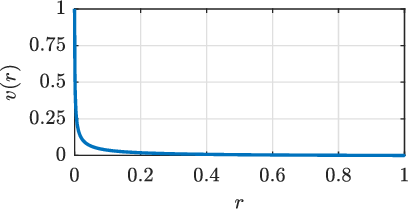}
    \hfill
    \includegraphics[width=0.49\linewidth]{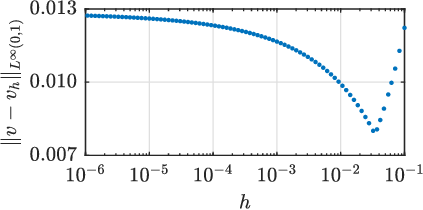}
    \vspace{-1ex}
    \caption{
        \emph{Left:} Minimizer $v \in V_h$ for~\cref{Snh} with $n=3$ and $h=10^{-2}$, normalized so that $v(0)=1$.
        \emph{Right:} Error $\|v-v_h\|_{L^\infty(0,1)}$ between the minimizer $v$ of~\cref{Snh} and the function $v_h$ in~\cref{e:optimal-function-3d} for $n=3$. Both functions are normalized so that $v(0)=v_h(0)=1$.
        \label{f:ev-3d}
    }
\end{figure}

\subsection{Computations for \texorpdfstring{$n=3$}{n=3} with radial symmetry}

Next, we consider computations in dimension $n=3$ when the domain $\Omega=B$ is the unit ball. We focus on the case of radially symmetric meshes because the computation of $S_h^3$ reduces to solving the one-dimensional minimization problem in \cref{Snh} for $n=3$. This problem is equivalent to a tridiagonal generalized eigenvalue problem, which can be solved on a laptop even for very small values of the mesh size $h$. General meshes of the three-dimensional ball, instead, would require a more sophisticated parallel implementation on a computer cluster that is beyond the scope of our work. Our computations used uniform meshes with $N$ elements of size $h=1/N$, and we considered 100 logarithmically spaced integer values $N$ from $N=10$ to $N=10^7$. The gap
\begin{equation*}
    E_h^3 := S_{h}^3- \frac14
\end{equation*}
is plotted in \Cref{err2} after scaling by the values $9\pi^2/(16 + 3h + 3 \vert\log h\vert)^2$ and $4\pi^2/ ( \abs{\log h} - 3 \log \abs{\log h} )^2$ one predicts for $E_h^3$ (up to higher-order corrections) from the lower and upper bounds on $S_h^3$ in \cref{e:lb-3d-statement} and \cref{e:ub-3d-statement}, respectively. As before, we use these values rather than the asymptotically equivalent predictions from \Cref{main-3d-spherical} because we expect them to be more accurate for small but finite $h$. Again, the results suggest that our lower bound predicts $E_h^3$ more accurately than our upper bound. Moreover, \Cref{f:ev-3d} reveals that the function $v_h$ in \cref{e:optimal-function-3d} is a reasonable approximation for the minimizer $v$ of \cref{Snh}. However, contrary to the case of  dimension $n=1$, the approximation error does not seem to decrease as the mesh is refined.

\section{Open problems}\label{s:discussion}
We conclude with a list of open problems. 
\begin{enumerate}[leftmargin=*, align=left, itemsep=0.5ex]
    \item Determine the exact prefactor for the $O(1/\abs{\log(h)}^2)$ correction to the asymptotic value of the discrete Hardy constant in $n\geq 3$ dimensions. 
    Our analysis does not provide this because the leading-order corrections in the upper and lower bounds in \Cref{main-3d-spherical} do not match as the mesh size $h$ tends to zero. The good quantitative agreement between the values of $S_h^n$ computed numerically in \cref{s:numerics} and the lower bounds in \cref{e:lb-3d-statement} suggests that
    \begin{equation*}
        S_h^n = S^n + \frac{\pi^2}{\abs{\log(h)}^2}+o\left(\dfrac{1}{\abs{\log(h)}^2}\right).
    \end{equation*}
    Confirming this precisely, even with computer assistance, is challenging due to the slow decay of higher-order logarithmic corrections. If this prediction is correct, however, then one should be able to improve upper bound in \cref{e:ub-3d-statement}. 
          
    \item Generalize our estimates to the Hardy inequality with exponent $p\in(1,n)$. The calibration technique we employed has already been used to prove sharp lower bounds on the optimal constant for the one-dimensional Poincar\'e inequality in $W^{1,p}_0$ when $p$ is an even integer \cite{Chernyavsky2021}. We wonder if those arguments carry over first to the Hardy inequality with exponent $p\neq 2$, and then to its finite element approximations.

    \item Generalize our estimates to refinements of the Hardy inequality in dimension $n=2$, which was not considered here. In particular, we wonder if one can estimate the convergence rate of finite element approximations for logarithmic versions of the Hardy inequality (see, e.g., \cite{pino}). 
\end{enumerate}

\appendix
\section{Proof of \texorpdfstring{\Cref{main-3d-spherical}}{Theorem \ref{main-3d-spherical}}}
\label{s:proof-3d-radial}

In this appendix, we prove the upper and lower bounds reported in \Cref{main-3d-spherical}, which apply to the discrete Hardy constant $S_h^n$ defined in \cref{e:hardy-ratio-def-3d} for any dimension $n\geq 3$. We follow essentially the same strategy used in \cref{s:1} for the case of $n=1$ dimensions, but the computations are more involved. There is only one minor technical difference in the proof of the lower bound, which we point out explicitly.

\subsection{Proof of the lower bound}
\label{ss:lb-3d}
Let us first prove the lower bound part of \Cref{main-3d-spherical}. We shall in fact establish the lower bound
\begin{equation}\label{e:lb-3d-statement}
    S_h^n \geq S^n + \frac{\pi^2}{\left( \frac{8(n-1)}{n(n-2)} + h + \abs{ \log h }\right)^2}  +  o\left( \frac{1}{\abs{ \log h }^2} \right),
\end{equation}
which is asymptotically equivalent to that in the theorem for $h\ll 1$. The proof follows the same strategy as in \cref{ss:lb-1d} with only one difference: the space $U_h$ in that section is replaced by the space $W_h$ of functions in $H^1(0,1)$ that vanish at $r=1$ and that are linear both on $(0,h)$ and on $(1-h,1)$. Since the space $W_h$ contains the finite element space $V_h$ used to define $S_h^n$, the inequality in \cref{e:lb-3d-statement} follows from a lower bound on
\begin{equation}\label{e:mu-def-3d}
    \mu_{h}^n := \inf_{v \in W_h} \dfrac{\displaystyle{\int_{0}^{1} r^{n-1} \abs{v'}^{2}\,dr}}{\displaystyle{\int_{0}^{1} r^{n-3} v^{2}\,dr}}.
\end{equation}

We will compute this quantity using a calibration-type argument. To simplify the notation, let us introduce for every positive integer $m$ the function
\begin{subequations}\label{e:fg-def}
    \begin{equation}\label{e:f-def}
        f_m(h) := \frac1m \sum_{k=0}^{m-1} (1-h)^{2-n+k}.
    \end{equation}
    Let us also set
    \begin{equation}\label{e:g-def}
        g_n(h) := f_n(h) - 2 f_{n-1}(h) +  f_{n-2}(h).
    \end{equation}
\end{subequations}

\begin{theorem}\label{th:mu-3d}
    Suppose $\delta_h$ and $\gamma_h$ solve
    \begin{subequations}
    \label{e:eps-equation-3d}
        \begin{gather}
        \label{e:eps-cond-3d-a}
            \left( \frac{n^2-2n}{2} - 2\delta_h^2 \right)\left( \frac{n^2}{4} - \delta_h^2 + n \delta_h \tan\left( \gamma_h + \delta_h \log h \right)\right) = \frac{n-2}{n-1} \left( \frac{n^2}{4} +\delta_h^2 \right)^2
            \\[1ex]
            \label{e:eps-cond-3d-b}
            \frac{n-2}{2} + \delta_h \tan\left( \gamma_h + \delta_h \log (1-h) \right)  = 
            \frac{1}{h}
            \left[ f_n(h) - g_n(h)\left( \frac{(n-2)^2}{4} + \delta_h^2 \right)\right].
        \end{gather}
    \end{subequations}
    Then, $\mu_{h}^n = S^n + \delta_h^2$. In particular, for $h\ll 1$ we have
    \begin{equation}\label{e:mu-expansion-3d}
         \mu_{h}^n = S^n + \frac{\pi^2}{\left( \frac{8(n-1)}{n(n-2)} + h + \abs{ \log h }\right)^2}  +  o\left( \frac{1}{\abs{ \log h }^2} \right).
    \end{equation}
\end{theorem}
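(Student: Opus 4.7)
The plan is to mirror the three-stage argument of \cref{lem:1d-lower-bound,lem:1d-asympt,lem:1d-lb-ub}, working with the weighted quadratic functional
\begin{equation*}
    F_\lambda^n(v) := \int_0^1 r^{n-1} |v'|^2 - \lambda\, r^{n-3} v^2 \, dr,
\end{equation*}
so that $\mu_h^n$ is the largest $\lambda$ for which $F_\lambda^n(v) \geq 0$ for every $v \in W_h$. The first step is a calibration: for any $\varphi \in C^1([h,1-h])$, the identity $\int_h^{1-h} (r^{n-2} \varphi v^2)'\, dr = [r^{n-2}\varphi v^2]_h^{1-h}$ allows one to rewrite
\begin{equation*}
    F_\lambda^n(v) = \mathcal{B}_0(v) + \mathcal{B}_1(v) + \int_h^{1-h} r^{n-1}\left(v' + \tfrac{\varphi}{r}v\right)^2 dr,
\end{equation*}
where $\mathcal B_0$ and $\mathcal B_1$ collect the contributions on $(0,h)$ and $(1-h,1)$ together with the boundary terms at $r=h$ and $r=1-h$. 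The decomposition is valid whenever $\varphi$ solves the Riccati equation $r\varphi' + (n-2)\varphi + \varphi^2 + \lambda = 0$. Writing $\lambda = S^n + \delta^2$ and substituting $\varphi = -\tfrac{n-2}{2} - \psi$ reduces this to the separable ODE $r\psi' = \psi^2 + \delta^2$, whose explicit solutions $\psi(r) = \delta\tan(\gamma + \delta\log r)$ are indexed by a free constant $\gamma$ matching the paper's notation.

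The second step is to translate $\mathcal B_0, \mathcal B_1 \geq 0$ into conditions \cref{e:eps-cond-3d-a} and \cref{e:eps-cond-3d-b}. On $(0,h)$, an element of $W_h$ is linear with two free parameters (its value at $0$ and its slope), so $\mathcal{B}_0$ is a quadratic form on $\mathbb R^2$ with coefficients depending on $\lambda$, $h$ and $\varphi(h)$. The sharp threshold for positive semi-definiteness is vanishing determinant, which simplifies algebraically to \cref{e:eps-cond-3d-a}. On $(1-h,1)$, the condition $v(1)=0$ leaves only $V = v(1-h)$ free; here the key computation is the geometric-series identity $\int_{1-h}^1 r^{n-1-k}\, dr = h(1-h)^{n-2} f_{n-k}(h)$ for $k=0,1,2$, which follows directly from the definition of $f_m$ and explains why the combination $g_n = f_n - 2 f_{n-1} + f_{n-2}$ arises when expanding $(1-r)^2$ inside $\int_{1-h}^1 r^{n-3} v^2 \, dr$. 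Vanishing of the coefficient of $V^2$ in $\mathcal B_1$ then gives \cref{e:eps-cond-3d-b}.

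Given these two conditions, the identity $\mu_h^n = S^n + \delta_h^2$ follows exactly as in \cref{lem:1d-lower-bound} and \cref{lem:1d-lb-ub}: the decomposition makes $F_\lambda^n(v) \geq 0$ manifest on $W_h$ for $\lambda = S^n + \delta_h^2$ whenever $(\delta_h, \gamma_h)$ solves \cref{e:eps-equation-3d}, and solving the ODE $v' + \varphi v/r = 0$ on $(h,1-h)$ and extending the solution linearly to $(0,h)$ and $(1-h,1)$ produces a test function that saturates the bound.

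The main obstacle is the asymptotic expansion \cref{e:mu-expansion-3d}. Direct Taylor expansion yields $f_n(h) = 1 + \tfrac{n-3}{2} h + O(h^2)$ and, after a cancellation of both the constant and the linear terms, $g_n(h) = \tfrac13 h^2 + O(h^3)$, so the right-hand side of \cref{e:eps-cond-3d-b} behaves like $\tfrac{1}{h} + \tfrac{n-3}{2} + O(h)$. Matching this singular behavior on the left forces $\gamma_h + \delta_h\log(1-h)$ to approach $\tfrac{\pi}{2}$ modulo $\pi$ with deviation scaling as $-\delta_h h$ to leading order. On the other hand, evaluating \cref{e:eps-cond-3d-a} at $\delta=0$ yields the limiting value $\delta_h \tan(\gamma_h + \delta_h \log h) \to -\tfrac{n(n-2)}{8(n-1)}$, which places $\gamma_h + \delta_h\log h$ near $-\tfrac{\pi}{2}$ modulo $\pi$ with deviation $\epsilon_h \sim \tfrac{8(n-1)}{n(n-2)}\,\delta_h$. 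Combining both analyses via $\log(1-h) = -h + O(h^2)$ and $\log h = -|\log h|$ produces $\delta_h\bigl(|\log h| + \tfrac{8(n-1)}{n(n-2)} + h + o(1)\bigr) = \pi$, from which \cref{e:mu-expansion-3d} follows by squaring. The delicate part is tracking the higher-order trigonometric and logarithmic corrections carefully enough to preserve the $O(h)$ term in the denominator.
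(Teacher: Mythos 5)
Your proposal follows essentially the same route as the paper: the same calibration with $\int_h^{1-h}(r^{n-2}\varphi v^2)'\,dr$, the same tangent solution of the Riccati equation, the determinant condition for the quadratic form in $(v(0),v(h))$ giving \cref{e:eps-cond-3d-a}, the geometric-series identities behind $f_m$ and $g_n$ giving \cref{e:eps-cond-3d-b}, a saturating test function obtained by solving $rv'+\varphi v=0$ and extending linearly, and the same asymptotic matching (your expansions $f_n(h)=1+\tfrac{n-3}{2}h+O(h^2)$, $g_n(h)=\tfrac13 h^2+O(h^3)$ and the limit $\delta_h\tan(\gamma_h+\delta_h\log h)\to -\tfrac{n(n-2)}{8(n-1)}$ all check out). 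One sign slip to fix: for the perfect square $r^{n-1}\bigl(v'+\tfrac{\varphi}{r}v\bigr)^2$ the Riccati equation must be $r\varphi'+(n-2)\varphi-\varphi^2-\lambda=0$ (the paper's $(r^{n-2}\varphi)'-\lambda r^{n-3}=r^{n-3}\varphi^2$), not $r\varphi'+(n-2)\varphi+\varphi^2+\lambda=0$; your $\varphi=-\tfrac{n-2}{2}-\delta\tan(\gamma+\delta\log r)$ is the negative of the correct one, and the signs must be propagated consistently to recover \cref{e:eps-equation-3d} verbatim. Also note two details needed for completeness: positive semidefiniteness of the $2\times 2$ form requires the diagonal condition $(n-1)(n-2)-2\lambda\geq 0$ in addition to the vanishing determinant, and saturation on $(0,h)$ forces $(v(0),v(h))$ to lie in the kernel of the resulting rank-one matrix, which fixes $v(0)$ as a specific multiple of $v(h)$.
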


This result is an immediate consequence of the following three lemmas. The first establishes the lower bound $\mu_{h}^n\geq S^n + \delta_h^2$. The second provides an asymptotic expression for $\delta_h$ when $h\ll1$. The third shows that $\mu_{h}^n\leq S^n + \delta_h^2$, from which we conclude that $\mu_{h}^n = S^n + \delta_h^2$.

\begin{lemma}\label[lemma]{lem:mu-lb-3d}
    If $\delta_h$ and $\gamma_h$ solve \cref{e:eps-equation-3d}, then $\mu_h \geq S^n + \delta_h^2$.
\end{lemma}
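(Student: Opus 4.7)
The plan is to mimic the calibration argument from the proof of \cref{lem:1d-lower-bound}. Introduce the functional $F_\lambda(v) := \int_0^1 r^{n-1} \abs{v'}^2 - \lambda\, r^{n-3} v^2 \, dr$ and observe that $\mu_h^n$ coincides with the largest $\lambda$ for which $F_\lambda(v) \geq 0$ on $W_h$. It therefore suffices to prove $F_\lambda(v) \geq 0$ on $W_h$ when $\lambda = S^n + \delta_h^2 = (n-2)^2/4 + \delta_h^2$.

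The first step is to split the integral defining $F_\lambda(v)$ over the three intervals $(0,h)$, $(h,1-h)$, and $(1-h,1)$. On the two outer intervals every $v \in W_h$ is linear, so the corresponding contributions can be computed explicitly in terms of the nodal values $v(0)$, $v(h)$, and $v(1-h)$ (recalling that $v(1)=0$). The quantities $f_m(h)$ and $g_n(h)$ from \cref{e:fg-def} arise naturally at this step, since a short computation gives $\int_{1-h}^1 r^{m-1}\, dr = h(1-h)^{n-2} f_m(h)$ for $m = n-2, n-1, n$ and $\int_{1-h}^1 r^{n-3}(1-r)^2\, dr = h(1-h)^{n-2} g_n(h)$.

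On the middle interval I would apply the calibration: adding $\int_h^{1-h} (r^{n-2} \varphi\, v^2)'\, dr$ and subtracting its boundary evaluation $[r^{n-2}\varphi v^2]_h^{1-h}$ leaves $F_\lambda$ unchanged, while a suitable choice of $\varphi$ allows one to recast the middle contribution as a perfect square. Precisely, completing the square in $r^{(n-1)/2} v' + r^{(n-3)/2} \varphi\, v$ requires $\varphi$ to satisfy the Riccati ODE $r\varphi' = \varphi^2 - (n-2)\varphi + \lambda$, and for $\lambda = (n-2)^2/4 + \delta^2$ the general solution is
\[
\varphi(r) = \frac{n-2}{2} + \delta \tan\bigl(\gamma + \delta \log r\bigr),
\]
with a free phase $\gamma \in \mathbb{R}$. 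After these manipulations, $F_\lambda(v)$ equals a nonnegative square integral plus two quadratic forms in the nodal values: a scalar form $Q_3$ in $v(1-h)$ coming from the outer interval together with the contribution $-(1-h)^{n-2}\varphi(1-h)\,v(1-h)^2$, and a $2\times 2$ form $Q_1$ in $(v(0), v(h))$ coming from the inner interval together with $+h^{n-2}\varphi(h)\,v(h)^2$. The two forms decouple because $v(0)$ and $v(h)$ do not appear in the outer contribution. A direct calculation shows that $Q_3 \geq 0$ is equivalent to the inequality version of \cref{e:eps-cond-3d-b}, while positive semidefiniteness of $Q_1$ reduces to the inequality version of \cref{e:eps-cond-3d-a} (together with positivity of both diagonal entries, which holds automatically for $\delta$ small). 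Choosing $(\delta_h, \gamma_h)$ so that both conditions hold with equality, as assumed, then gives $F_\lambda(v) \geq 0$ for every $v \in W_h$, and hence $\mu_h^n \geq S^n + \delta_h^2$.

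The main obstacle is the algebraic bookkeeping involved in assembling $Q_1$. Its entries combine the three elementary integrals $\int_0^h r^{m-1}\, dr = h^m/m$ (for $m = n-2, n-1, n$) with the inner calibration term $h^{n-2}\varphi(h)$, and one has to verify that, upon substituting $\lambda = (n-2)^2/4 + \delta^2$ and $\varphi(h) = (n-2)/2 + \delta \tan(\gamma + \delta\log h)$, the diagonal coefficients collapse to $\tfrac{n^2-2n}{2} - 2\delta^2$ and $n^2/4 - \delta^2 + n\delta\tan(\gamma + \delta\log h)$ (up to common factors), and that the off-diagonal entry produces the right-hand side of \cref{e:eps-cond-3d-a} when squared. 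This is a finite but somewhat lengthy identity to check by hand.
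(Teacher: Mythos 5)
Your proposal is correct and follows essentially the same route as the paper: the same calibration with the divergence term $(r^{n-2}\varphi\,v^2)'$, the same Riccati equation and tangent solution for $\varphi$, and the same reduction to a nonnegative square integral plus two decoupled quadratic forms in the nodal values, whose nonnegativity is exactly the inequality versions of \cref{e:eps-cond-3d-a,e:eps-cond-3d-b}. Your identification of how $f_m$ and $g_n$ arise from the outer-interval integrals is also the computation the paper performs.
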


\begin{proof}
    Set
    \begin{equation*}
        F_\lambda(v) := \int_{0}^{1} r^{n-1}\abs{v'}^{2} - \lambda r^{n-3} v^2\,dr
    \end{equation*}
    and observe that
    \begin{equation}\label{e:mu-ineq-form-3d}
        \mu_{h}^n = \max\left\{ \lambda:\; F_\lambda(v) \geq 0 \quad \forall v \in W_h\right\}.
    \end{equation}
    By definition of $W_h$, every $v \in W_h$ satisfies
    \begin{equation*}
        v(x) =  
        \begin{cases}
            h^{-1}(h-r) \,v(0) + h^{-1}r \,v(h) &\text{if } r \in (0,h),\\
            h^{-1}(1-r) \,v(1-h)&\text{if } r \in (1-h,1).
        \end{cases}
    \end{equation*}
    Moreover, every continuously differentiable function $\varphi(r)$ on $[h,1-h]$ satisfies
    \begin{equation*}
    \int_h^{1-h} \left(r^{n-2} \varphi(r) \, v^2  \right)' \,dr 
    = (1-h)^{n-2}\varphi(1-h) v(1-h)^2  - h^{n-2} \varphi(h) v(h)^2
    \end{equation*}
    by the fundamental theorem of calculus.
    Expanding the derivative under the integral using the chain rule, and using the piecewise linear representation of $v$, we can rewrite
    \begin{multline}\label{e:F-expansion-3d}
        F_\lambda(v) = 
        \frac{h^{n-2}}{n(n-1)(n-2)}\begin{pmatrix} v(0)\\ v(h)\end{pmatrix}^\top 
        M(n,h,\lambda)
        \begin{pmatrix} v(0)\\ v(h)\end{pmatrix}
        \\[1ex]
        + \int_h^1  r^{n-1} \vert v' \vert^{2} + 2 r^{n-2} \varphi(r) v v' +\left[ (r^{n-2}\varphi)' - \lambda \right] v^2 \;dx 
        \\[1ex]
        + h^{n-3}
        \left[ f_n(h)  - g_n(h)\lambda  - h \varphi(1-h)\right] v(1-h)^2,
    \end{multline}
    where the functions $f_n$ and $g_n$ are as in \cref{e:fg-def} and
    \begin{equation*}
    M(n, h,\lambda) :=
        \begin{pmatrix} 
            (n-1)(n-2)-2\lambda &  (n-2)(1-n-\lambda)\\
            (n-2)(1-n-\lambda) & (n-1)(n-2)\left(1-\lambda + n\varphi(h)\right)
        \end{pmatrix}.
    \end{equation*}
    The right-hand side of \cref{e:F-expansion-3d} is nonnegative if the matrix $M(n,h,\lambda)$ is positive semidefinite, the integrand in the second line is a square, and the coefficient of the last term is nonnegative. This is true if $\varphi$ and $\lambda$ satisfy
    \begin{subequations}\label{e:phi-conditions-3d}
         \begin{gather}
         (n-1)(n-2)-2\lambda \geq 0,
         \label{e:phi-conditions-3d-a}
         \\
         (n-1)\left[ (n-1)(n-2)-2\lambda \right]\left[ 1-\lambda + n\varphi(h)\right]
         \geq (n-2)(1-n-\lambda)^2,
         \label{e:phi-ineq-det-3d}
         \\
         {f_n(h)}  - g_n(h)\lambda  -  h \varphi(1-h)\geq 0, \label{e:phi-ineq-h-3d}
         \\
         \left( r^{n-2}\varphi \right)' - \lambda = r^{n-3} \varphi^2. \label{e:phi-ode-3d}
        \end{gather}
    \end{subequations}
    
    To satisfy these four conditions, we set
    $\lambda = S^n + \delta^2$ for some $\delta$ to be determined below, and solve the differential equation in \cref{e:phi-ode-3d} to find
    \begin{equation*}
        \varphi(r) = \frac{n-2}{2} + \delta \tan \left( \gamma + \delta \log r \right),
    \end{equation*}
    where $\gamma$ is an integration constant. Then, we substitute this function into \cref{e:phi-conditions-3d-a,,e:phi-ineq-det-3d,,e:phi-ineq-h-3d}. After some rearrangement, we conclude that $\delta$ and $\gamma$ should satisfy
    \begin{subequations}
        \begin{gather}\label{e:eps-choice-condition-3d}
        n(n-2) - 4 \delta^2 \geq 0,
        \\
        \left( \frac{n^2-2n}{2} - 2\delta^2 \right)\left( \frac{n^2}{4} - \delta^2 + n \delta \tan\left( \gamma + \delta \log h \right)\right) 
        \geq
        \frac{n-2}{n-1} \left( \frac{n^2}{4} +\delta^2 \right)^2,
        \\
        \frac{n-2}{2} + \delta \tan\left( \gamma + \delta \log (1-h) \right)  \leq 
        \frac{1}{h}
        \left[ f_n(h)  - g_n(h)\frac{(n-2)^2}{4} - g_n(h)\delta^2\right].
    \end{gather}
    \end{subequations}
    These inequalities hold with equality if $\delta = \delta_h$ and $\gamma=\gamma_h$. The choice $\lambda=S^n + \delta_h^2$ is thus feasible for the maximization problem in \cref{e:mu-ineq-form-3d}, so $\mu_h \geq S^n + \delta_h^2$.
\end{proof}

Next, we solve~\cref{e:eps-equation-3d} for $h\ll 1$ to derive an asymptotic expansion for $\delta_h$. 

\begin{figure}
    \centering
    \includegraphics[width=0.49\linewidth]{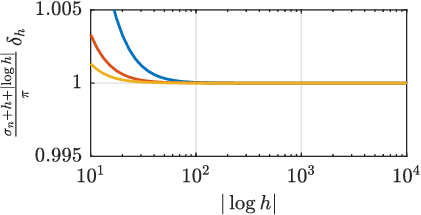}
    \hfill
    \includegraphics[width=0.49\linewidth]{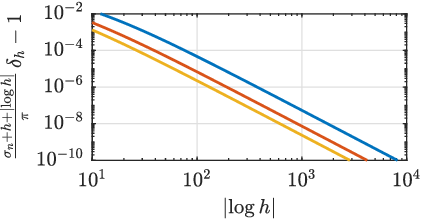}
    \caption{\emph{Left:} Ratio of $\delta_h$ to the leading-order term $\pi/(\sigma_n + h +\abs{ \log h })$ in its asymptotic expansion, where $\sigma_n = 8(n-1)/(n^2-2n)$. \emph{Right:} Error between this ratio and the value $1$. The three curves are for $n=3$ (blue), $n=4$ (red), and $n=5$ (yellow).
    }
    \label{fig:asympt-3d}
\end{figure}

\begin{lemma}
    If $\delta_h$ and $\gamma_h$ solve \cref{e:eps-equation-3d} and $h \ll 1$, then
    \begin{equation}\label{e:delta-3d-asympt}
        \delta_h = \frac{\pi}{ \frac{8(n-1)}{n(n-2)} + h + \abs{\log h}} + o\left( \frac{1}{\abs{ \log h }}\right).
    \end{equation}
\end{lemma}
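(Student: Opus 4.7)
The plan is to follow closely the argument used to prove \cref{lem:1d-asympt} in one dimension: anticipate that $\delta_h$ is small, of order $1/\abs{\log h}$, when $h$ is small, and extract leading-order information from each of the two equations in \cref{e:eps-equation-3d} via Taylor expansions of the tangents. The algebra is more involved than in the one-dimensional case, and the main subtlety will be a delicate cancellation in the boundary analysis at $r=1$.

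Setting $\theta_h := \gamma_h + \delta_h \log h$ and expanding the left-hand side of \cref{e:eps-cond-3d-a} in powers of $\delta_h$, I would solve for $\tan\theta_h$ and find, after routine algebra,
\begin{equation*}
\tan\theta_h = -\frac{n(n-2)}{8(n-1)\delta_h} + O(\delta_h) = -\frac{1}{\sigma_n\delta_h} + O(\delta_h),
\end{equation*}
where $\sigma_n := 8(n-1)/[n(n-2)]$. Since $\abs{\tan\theta_h} \to \infty$ and $\tan\theta_h<0$, there exist an integer $m$ and a small positive $\alpha_h$ with $\theta_h = \pi/2 + m\pi + \alpha_h$. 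Using $\tan(\pi/2 + \alpha_h) = -\cot\alpha_h = -1/\alpha_h + O(\alpha_h)$ and inverting gives $\alpha_h = \sigma_n\delta_h + O(\delta_h^3)$, so that
\begin{equation*}
\gamma_h = \frac{\pi}{2} + m\pi + \sigma_n\delta_h + \delta_h\abs{\log h} + O(\delta_h^3).
\end{equation*}

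Turning to \cref{e:eps-cond-3d-b}, I would compute that $f_n(h) = 1 + \tfrac{n-3}{2}h + O(h^2)$ and verify directly from \cref{e:fg-def} that $g_n(0) = g_n'(0) = 0$, so $g_n(h) = O(h^2)$. Substituting these expansions and isolating the tangent produces
\begin{equation*}
\tan\bigl(\gamma_h + \delta_h\log(1-h)\bigr) = \frac{1}{\delta_h h} - \frac{1}{2\delta_h} + O\!\left(\frac{h}{\delta_h}\right),
\end{equation*}
which is large and positive. Hence $\gamma_h + \delta_h\log(1-h) = \pi/2 + k\pi - \epsilon_h$ for some integer $k$ and small $\epsilon_h > 0$. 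Expanding $\cot\epsilon_h = 1/\epsilon_h + O(\epsilon_h)$ and inverting shows $\epsilon_h = \delta_h h + \tfrac12\delta_h h^2 + O(\delta_h h^3)$. The crucial observation is that $-\delta_h\log(1-h) = \delta_h h + \tfrac12\delta_h h^2 + O(\delta_h h^3)$ as well, so the two contributions cancel exactly to this order, yielding
\begin{equation*}
\gamma_h = \frac{\pi}{2} + k\pi + O(\delta_h h^3).
\end{equation*}
This cancellation is the technical heart of the proof and the one step I expect to require careful bookkeeping of higher-order terms.

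To conclude, I would equate the two expressions for $\gamma_h$ modulo $\pi$, obtaining $\sigma_n\delta_h + \delta_h\abs{\log h} = (k-m)\pi + O(\delta_h^3) + O(\delta_h h^3)$. Selecting the smallest positive solution, which corresponds to the $\delta_h$ furnished by \cref{lem:mu-lb-3d}, forces $k-m=1$, and therefore
\begin{equation*}
\delta_h = \frac{\pi}{\sigma_n + \abs{\log h}} + O\!\left(\frac{1}{\abs{\log h}^3}\right).
\end{equation*}
Since the two denominators $\sigma_n + \abs{\log h}$ and $\sigma_n + h + \abs{\log h}$ differ by $h$, the corresponding values of $\pi$ over these denominators differ by $O(h/\abs{\log h}^2) = o(1/\abs{\log h})$, which is absorbed in the error term of \cref{e:delta-3d-asympt}.
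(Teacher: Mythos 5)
Your argument is correct and follows essentially the same route as the paper: treat $\delta_h$ as small, Taylor-expand the tangents in the two equations of \cref{e:eps-equation-3d}, solve each for $\gamma_h$ modulo $\pi$, and equate. The only substantive difference is in the boundary equation \cref{e:eps-cond-3d-b}: the paper keeps just the leading term and writes $\gamma_h = \tan^{-1}\bigl(1/(h\delta_h)\bigr) = \tfrac{\pi}{2} - h\delta_h + \dots$, which is where the $+h$ in the denominator of \cref{e:delta-3d-asympt} comes from, whereas your more careful bookkeeping of $\delta_h\log(1-h)$ together with the expansions $f_n(h)=1+\tfrac{n-3}{2}h+O(h^2)$ and $g_n(h)=O(h^2)$ (both of which I checked) exhibits a cancellation at orders $\delta_h h$ and $\delta_h h^2$, giving $\delta_h = \pi/(\sigma_n+\abs{\log h}) + O(1/\abs{\log h}^3)$ with your $\sigma_n = 8(n-1)/(n(n-2))$. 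Since the two expressions differ by $O(h/\abs{\log h}^2) = o(1/\abs{\log h})$, both yield the lemma as stated, and you correctly make this reconciliation explicit at the end; if anything, your computation suggests the $+h$ in the paper's denominator is an artifact of its truncation rather than a genuine correction. The one step you leave at the same informal level as the paper is the branch selection ($k-m=1$, principal branches of the arctangents), which is acceptable here given that the paper itself verifies the expansion numerically.
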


\begin{proof}
    We perform an asymptotic solution of~\cref{e:eps-equation-3d} for $h\ll 1$. Anticipating that $\delta_h \ll 1$ and that $\gamma_h \approx \pi/2$ in this regime, we rearrange \cref{e:eps-cond-3d-a} keeping only the leading-order terms to find that
    \begin{equation*}
         \frac{n^2}{4} + n \delta_h \tan\left( \gamma_h + \delta_h \log h\right) = \frac{n^3}{8(n-1)}.
    \end{equation*}
    This equation can be solved for $\gamma_h$ to obtain, again to leading order,
    \begin{equation*}
        \gamma_h = - \tan^{-1}\left( \frac{n(n-2)}{8(n-1)\delta_h} \right) - \delta_h \log h.
    \end{equation*}
    We then rearrange \cref{e:eps-cond-3d-b} keeping only the leading-order terms in $\delta_h$ to arrive, after some algebraic simplifications, at
    \begin{equation*}
         \gamma_h = 
            \tan^{-1}\left(  \frac{1}{h\delta_h } \right).
    \end{equation*}
    Substituting the expression for $\gamma_h$ and using the Taylor expansion of the tangent for small $\delta_h$ we obtain, to leading order in $\delta_h$,
    \begin{equation*}
         - \frac{\pi}{2}  + \frac{8(n-1)}{n(n-2)} \delta_h  + o(\delta_h) - \delta_h \log h = 
           \frac{\pi}{2} - h\delta_h + o(\delta_h).
    \end{equation*}
    Solving this equation for $\delta_h$ yields \cref{e:delta-3d-asympt}. The correctness of this asymptotic expansion confirmed for $n=3$, $4$ and $5$ by \Cref{fig:asympt-3d}.
\end{proof}

Finally, we show that the lower bound $\mu_{h}^n\geq S^n + \delta_h^2$ proved in \cref{lem:mu-lb-3d} is sharp by establishing the reverse inequality.

\begin{lemma}
     If $\delta_h$ and $\gamma_h$ solve \cref{e:eps-equation-3d}, then $\mu_h \geq S^n + \delta_h^2$.
\end{lemma}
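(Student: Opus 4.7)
The goal is to prove the lower bound $\mu_h^n \geq S^n + \delta_h^2$ under the hypothesis that $(\delta_h,\gamma_h)$ solves the system \cref{e:eps-equation-3d}. By the variational reformulation \cref{e:mu-ineq-form-3d}, this reduces to exhibiting a single admissible value $\lambda = S^n + \delta_h^2$ for which $F_\lambda(v) \geq 0$ on every $v \in W_h$. The strategy is exactly the calibration already set up in \cref{lem:mu-lb-3d}: we reuse the decomposition \cref{e:F-expansion-3d}, together with a carefully chosen calibration function $\varphi$, so that the hypothesis \cref{e:eps-equation-3d} collapses the three sufficient conditions \cref{e:phi-conditions-3d-a,e:phi-ineq-det-3d,e:phi-ineq-h-3d} down to equalities and the fourth one, \cref{e:phi-ode-3d}, holds by construction.

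Concretely, the first step is to set $\lambda = S^n + \delta_h^2 = \tfrac{(n-2)^2}{4} + \delta_h^2$ and define the calibration function
\begin{equation*}
  \varphi(r) := \frac{n-2}{2} + \delta_h \tan\bigl(\gamma_h + \delta_h \log r\bigr),
\end{equation*}
which is the explicit solution of the Riccati equation \cref{e:phi-ode-3d} with this value of $\lambda$. Substituting this $\varphi$ into \cref{e:phi-ineq-det-3d} and \cref{e:phi-ineq-h-3d}, the two relations \cref{e:eps-cond-3d-a} and \cref{e:eps-cond-3d-b} that $\delta_h,\gamma_h$ satisfy are, after algebraic rearrangement identical to the one performed in \cref{lem:mu-lb-3d}, precisely equivalent to both inequalities being equalities. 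The remaining constraint \cref{e:phi-conditions-3d-a}, which reads $n(n-2) - 4\delta_h^2 \geq 0$, holds for $h$ small because $\delta_h = O(|\log h|^{-1}) \to 0$ by the asymptotic \cref{e:delta-3d-asympt}. With all four conditions in \cref{e:phi-conditions-3d} verified, the decomposition \cref{e:F-expansion-3d} implies
\begin{equation*}
  F_\lambda(v) \geq \int_h^{1-h} \bigl(r^{(n-1)/2} v' + r^{(n-3)/2}\varphi v\bigr)^2 \, dr \geq 0 \qquad \forall v \in W_h,
\end{equation*}
which makes $\lambda = S^n + \delta_h^2$ feasible in \cref{e:mu-ineq-form-3d} and hence $\mu_h^n \geq \lambda$.

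The only delicate issue, which I expect to be the main technical obstacle, is checking that the calibration $\varphi$ is regular on $[h,1-h]$: the tangent in its definition must not have a pole there. Using the asymptotic $\gamma_h = \tan^{-1}(1/(h\delta_h)) + o(1)$ (which drops out of the second equation in the proof of the expansion \cref{e:delta-3d-asympt}), together with $\delta_h |\log r| \leq \delta_h|\log h| \to 0$, one checks that the argument of $\tan$ stays within $(\pi/2 - c, \pi/2)$ for an $h$-independent $c>0$ and $h$ sufficiently small, so $\varphi$ remains smooth. Once this regularity is confirmed, the calibration identity \cref{e:F-expansion-3d} is valid and the chain of equalities described above yields $\mu_h^n \geq S^n + \delta_h^2$.
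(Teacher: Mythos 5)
There is a genuine mismatch here, caused partly by a typo in the paper itself. The lemma you were asked to prove is the \emph{third} lemma of \cref{ss:lb-3d}, whose stated inequality ``$\mu_h \geq S^n + \delta_h^2$'' is a misprint: as the preceding sentence makes clear (``we show that the lower bound \dots is sharp by establishing the reverse inequality''), the intended claim is $\mu_h^n \leq S^n + \delta_h^2$, which together with \cref{lem:mu-lb-3d} gives the equality $\mu_h^n = S^n + \delta_h^2$ asserted in \cref{th:mu-3d}. Your proposal proves the $\geq$ direction by rerunning the calibration of \cref{lem:mu-lb-3d} — correctly, and essentially verbatim — but that direction is already \cref{lem:mu-lb-3d}, and your method cannot reach the intended conclusion: showing $F_\lambda(v)\geq 0$ for \emph{all} $v\in W_h$ only ever produces lower bounds on the infimum $\mu_h^n$. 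To get the upper bound one must exhibit a specific competitor. The paper does this by noting that, with $\lambda = S^n+\delta_h^2$ and the conditions \cref{e:eps-equation-3d} holding with equality, \cref{e:F-expansion-3d} reduces to a perfect square plus a rank-one quadratic form; the explicit function $v_h$ in \cref{e:optimal-function-3d}, which solves $rv'+\varphi v=0$ on $(h,1-h)$ and is extended linearly with boundary values chosen to annihilate the rank-one term, then satisfies $F_\lambda(v_h)=0$, whence $\mu_h^n\leq\lambda$.

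A secondary point: your regularity check for $\varphi$ rests on the claim $\delta_h\abs{\log r}\leq \delta_h\abs{\log h}\to 0$, which is false — by \cref{e:delta-3d-asympt} one has $\delta_h\abs{\log h}\to\pi$, so the argument of the tangent does not stay near $\pi/2$ but sweeps from roughly $-\pi/2$ (at $r=h$) to roughly $\pi/2$ (at $r=1-h$). The conclusion that $\varphi$ has no pole on $[h,1-h]$ is still true, precisely because the equations \cref{e:eps-equation-3d} pin the endpoints of this sweep strictly inside $(-\pi/2,\pi/2)$, but the justification as written does not establish it.
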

\begin{proof}
    Let $\lambda=S^n+\delta_h^2$. It suffices to find a function $v_h \in W_h$ for which $F_\lambda(v_h)=0$, because then
    \begin{equation*}
        \lambda = S^n+\delta_h^2 = \dfrac{\displaystyle{\int_{0}^{1} r^{n-1} \abs{v'}^{2}\,dr}}{\displaystyle{\int_{0}^{1} r^{n-3} v^{2}\,dr}} \geq \mu_h^n.
    \end{equation*} 
    
    To construct $v_h$, recall that $\delta_h$ and $\gamma_h$ are chosen to satisfy \cref{e:phi-ineq-h-3d,e:phi-ineq-det-3d} with equality. Thus, for any $v \in W_h$, identity \cref{e:F-expansion-3d} becomes
    \begin{align*}
        F_\lambda(v)
        = 
        &\int_h^{1-h}  r^{n-3} \left( r v' + \varphi v \right)^2 \;dr
        \\
        &+\frac{h^{n-2}}{n(n-1)}\begin{pmatrix} v(0)\\ v(h)\end{pmatrix}^\top 
        \begin{pmatrix} 
            n-1-\frac{2\lambda}{n-2} &  1-n-\lambda\\
            1-n-\lambda & \frac{(n-2)(1-n-\lambda)^2}{(n-1)(n-2)-2\lambda}
        \end{pmatrix}
        \begin{pmatrix} v(0)\\ v(h)\end{pmatrix} .
    \end{align*}
    Recognizing that the $2\times 2$ matrix in the second line has rank one, we can further rewrite
    \begin{align*}
        F_\lambda(v)
        =
        &\int_h^{1-h}  r^{n-3} \left( r v' + \varphi v \right)^2 \;dr
        \\ 
        &+ \frac{h^{n-2}}{n(n-1)}
        \left[ \sqrt{n-1-\frac{2\lambda}{n-2}} \, v(0) + \frac{\sqrt{n-2}(1-n-\lambda)}{\sqrt{(n-1)(n-2)-2\lambda}} \, v(h)\right]^2 .
    \end{align*}
    To construct $v_h$, therefore, we first solve the differential equation $r v' + \varphi(r) v = 0$ on $(h,1-h)$. Then, we extend the solution linearly to $(0,h)$ and to $(1-h,1)$ while satisfying the boundary conditions
    \begin{equation*}
        v(1)=0
        \qquad\text{and}\qquad
       v(0) = - \frac{(n-2)(1-n-\lambda)}{(n-1)(n-2)-2\lambda} \, v(h).
    \end{equation*}
    Introducing the function
    \begin{equation*}
        \Psi(r) := r^{-\frac{n-2}{2}} \cos\left( \gamma_h + \delta_h \log r \right)
    \end{equation*}
    for convenience, we find that
    \begin{equation}\label{e:optimal-function-3d}
        v_h(r) =
        \begin{cases}
            A h^{-1}\left[ r  - \frac{(n-2)(1-n-\lambda)}{(n-1)(n-2)-2\lambda} (h-r)\right]\Psi(h) &r\in[0,h], \\[1ex]
            A \Psi(r) &r\in[h,1-h], \\[1ex]
            A h^{-1} (1-r) \Psi(1-h) &r\in[1-h,1],
        \end{cases}
    \end{equation}
    where $A$ is an arbitrary normalization constant.
\end{proof}

\subsection{Proof of the upper bound}
The upper bound in \cref{main-3d-spherical} is proven for any $n\geq 3$ exactly like its counterpart for $n=1$ in \cref{main}. The only difference is that we replace the function $v_\varepsilon$ in \cref{min1} with
\begin{equation}\label{e:v-interp-n}
    v_\varepsilon(x)=
    \begin{cases}
        0,
        & r\in [0,\varepsilon],\\
        r^{-\frac{n-2}{2}}\sin\left(\dfrac{\pi \log r}{\log \varepsilon}\right),
        & r\in (\varepsilon, 1],
    \end{cases}
\end{equation}
where $\varepsilon=\varepsilon(h)$ will be chosen to be an interpolation node. Observe that $v_{\varepsilon}(1)=0$ and $v_\varepsilon\in H^1(0,1)\cap H^2(\varepsilon,1)$. Moreover, direct calculation gives the following results.

\begin{lemma}\label[lemma]{lem:norm-v-eps-3d}
    For every $\varepsilon<1$, the function $v_\varepsilon$ in \cref{e:v-interp-n} satisfies
    \begin{gather*}
    \int_0^1 \abs{v_\varepsilon}^2 r^{n-3}dr=\dfrac{\abs{\log\varepsilon}}{2},
    \\
    \int_0^1 \abs{v'_\varepsilon}^2 r^{n-1}dr= \dfrac{(n-2)^2}{8}\abs{\log\varepsilon}+\dfrac{\pi^2}{2\abs{\log\varepsilon}},\\
    \int_\varepsilon^1 \abs{v''_\varepsilon}^2 r^{n+1}dr=\dfrac{n^2(n-2)^2}{32}\abs{\log \varepsilon}+\dfrac{\pi^2(n^2-2n+2)}{4\abs{\log \varepsilon}}+\dfrac{\pi^4}{2\abs{\log \varepsilon}^3}.
    \end{gather*}
\end{lemma}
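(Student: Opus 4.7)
The proof is a direct computation that becomes transparent after a single change of variable. The plan is to substitute $t = \log r / \log \varepsilon$ on $r \in [\varepsilon, 1]$, so that $t$ ranges over $[0,1]$ with $dr/r = \log \varepsilon \, dt$. Writing $\alpha = (n-2)/2$ and $\beta = \pi/\log \varepsilon$ to lighten the notation, all three integrands will reduce (after multiplying out the radial weights) to expressions of the form $r^{-1} P(\sin(\pi t), \cos(\pi t))$ for some quadratic polynomial $P$, at which point the change of variable produces standard trigonometric integrals over $[0,1]$.

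First I would evaluate the easy integral: the factor $r^{n-3}\cdot r^{-(n-2)} = r^{-1}$ cancels cleanly, leaving $|\log\varepsilon|\int_0^1\sin^2(\pi t)\,dt = |\log\varepsilon|/2$. Next I would compute
\[
    v_\varepsilon'(r) = r^{-n/2}\bigl[-\alpha\sin(\beta\log r) + \beta\cos(\beta\log r)\bigr],
\]
square it, and again the weight $r^{n-1}$ combines with $r^{-n}$ to give $r^{-1}$. The only required trigonometric identities are
\[
    \int_0^1\sin^2(\pi t)\,dt = \int_0^1\cos^2(\pi t)\,dt = \tfrac12, \qquad \int_0^1\sin(\pi t)\cos(\pi t)\,dt = 0,
\]
which kill the cross term and yield $\tfrac{(n-2)^2}{8}|\log\varepsilon| + \tfrac{\pi^2}{2|\log\varepsilon|}$ after substituting $\beta^2 = \pi^2/(\log\varepsilon)^2$.

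The third identity follows the same blueprint, but with more algebra. Differentiating twice gives
\[
    v_\varepsilon''(r) = r^{-(n+2)/2}\bigl\{[\alpha(\alpha+1)-\beta^2]\sin(\beta\log r) - (2\alpha+1)\beta\cos(\beta\log r)\bigr\},
\]
where $\alpha(\alpha+1) = n(n-2)/4$ and $2\alpha+1 = n-1$. Squaring and combining with the $r^{n+1}$ weight reduces the integrand once more to an $r^{-1}$ times a trigonometric polynomial. Invoking the same three identities yields
\[
    \frac{|\log\varepsilon|}{2}\Bigl\{\bigl[\tfrac{n(n-2)}{4}-\beta^2\bigr]^2 + (n-1)^2\beta^2\Bigr\},
\]
and expanding with the identity $(n-1)^2 - \tfrac{n(n-2)}{2} = \tfrac{n^2-2n+2}{2}$ delivers the three terms of the claimed expression after substituting back $\beta = \pi/\log\varepsilon$.

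There is no serious conceptual obstacle; the only item requiring care is bookkeeping the coefficients in $v_\varepsilon''$, since a sign error or misplaced factor of $n$ in the polynomial coefficient of $\sin(\beta \log r)$ would propagate into all three terms of the final expression. The quiet but essential ingredient is the orthogonality $\int_0^1 \sin(\pi t)\cos(\pi t)\,dt = 0$, without which the formulas would pick up unwanted cross terms linear in $\beta$.
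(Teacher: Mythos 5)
Your computation is correct and is exactly the ``direct calculation'' that the paper invokes without detail: the substitution $t=\log r/\log\varepsilon$, the cancellation of the radial weights down to $r^{-1}$, and the three trigonometric integrals all check out, as do the coefficient identities $\alpha(\alpha+1)=n(n-2)/4$, $2\alpha+1=n-1$, and $(n-1)^2-\tfrac{n(n-2)}{2}=\tfrac{n^2-2n+2}{2}$. Nothing further is needed.
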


Next, we derive two useful estimates that extend to $n\geq 3$ those stated for $n=1$ in \cref{lem:1d-estimates}. Note that the same arguments given here apply when $n=1$, too, and that in that case the assumption $f(1)=0$ can be dropped.

\begin{lemma}\label[lemma]{lem:estimates}
    Fix an integer $n\geq 3$ and let $\varepsilon=mh\in(0,1)$ be an interpolation node. Let $f\in H^1(0,1)\cap H^2(\varepsilon,1)$
    vanish on $[0, \varepsilon]$ and satisfy $f(1) = 0$. Set 
    \begin{equation*}
        \mathcal{E}_h(f):=\frac{h}{\varepsilon^{\frac{n+1}{2}}}
        \left(\int_0^1 \abs{f'}^2 r^{n-1}dr\right)^{\frac{1}{2}}    \left(\int_\varepsilon^1 \abs{f''}^2 r^{n+1}dr\right)^{\frac{1}{2}}+\frac{h^2}{\varepsilon^{n+1}}\int_\varepsilon^1\abs{f''}^2 r^{n+1}dr.
    \end{equation*}
    There exists a constant $C > 0$, independent of $f$, $h$ and $\varepsilon$, such that
    \begin{subequations}
        \begin{align}
        \label{first_e}
        \int_0^1 \abs{\left(\Pi_h f\right)'}^2 r^{n-1} dr 
        &\leq \int_0^1\abs{f'}^2r^{n-1}dr+C \mathcal{E}_h(f),
        \\
        \label{sec_e}   
        \int_0^1 \abs{\Pi_h f}^2 r^{n-3} dr 
        &\geq \int_0^1\abs{f}^2r^{n-3}dr-C \mathcal{E}_h(f).
    \end{align}
    \end{subequations}
\end{lemma}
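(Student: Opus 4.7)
The plan is to adapt the template from the proof of Lemma~3.5 (the one-dimensional analogue), carefully tracking the radial weights. The key structural observation is that both $f$ and $\Pi_h f$ vanish identically on $[0,\varepsilon]$: the former by assumption, and the latter because $\varepsilon = mh$ is an interpolation node. Consequently, every integral appearing in \cref{first_e,sec_e} reduces to one over $[\varepsilon, 1]$, and the proof boils down to weighted element-wise interpolation estimates on the mesh elements $T_k = [r_k, r_{k+1}]$ with $k \geq m$.

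To prove \cref{first_e}, I would expand $|(\Pi_h f)'|^2 = |f'|^2 + 2(\Pi_h f - f)'f' + |(\Pi_h f - f)'|^2$ and control the last two terms separately, applying Cauchy--Schwarz to the cross term to produce the first summand of $\mathcal{E}_h(f)$ and bounding the quadratic remainder directly against the second summand. To prove \cref{sec_e}, I would use the identity $|\Pi_h f|^2 = |f|^2 + 2(\Pi_h f - f)f + |\Pi_h f - f|^2$ and drop the last nonnegative term (since we only need a lower bound). The cross term $2\int_\varepsilon^1 (\Pi_h f - f) f\, r^{n-3}\,dr$ is then controlled by Cauchy--Schwarz in the weighted $L^2(r^{n-3}\,dr)$ norm, combined with the radial Hardy inequality $\int_\varepsilon^1 f^2 r^{n-3}\,dr \leq \bigl(\tfrac{2}{n-2}\bigr)^2 \int_\varepsilon^1 |f'|^2 r^{n-1}\,dr$, which applies because $f$ vanishes at both $r=\varepsilon$ and $r=1$.

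The technical core is therefore to bound $\int_\varepsilon^1 |(\Pi_h f - f)'|^2 r^{n-1}\,dr$ and $\int_\varepsilon^1 |\Pi_h f - f|^2 r^{n-3}\,dr$ by suitable multiples of $\int_\varepsilon^1 |f''|^2 r^{n+1}\,dr$. On each element $T_k$ with $k \geq m$ the standard one-dimensional finite element estimates give $\|(\Pi_h f - f)'\|_{L^2(T_k)}^2 \leq Ch^2 \|f''\|_{L^2(T_k)}^2$ and $\|\Pi_h f - f\|_{L^2(T_k)}^2 \leq Ch^4 \|f''\|_{L^2(T_k)}^2$. To convert these to weighted estimates I would exploit the fact that $r_{k+1} \leq 2 r_k$ on $T_k$ (because $h \leq \varepsilon \leq r_k$), so every power $r^\alpha$ is comparable to $r_k^\alpha$ uniformly on $T_k$. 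Multiplying the unweighted element-wise bounds by $r_k^{n-1}$ or $r_k^{n-3}$ as appropriate, inserting a factor $r_k^{n+1}/r_k^{n+1}$ on the right to recover the weighted $H^2$ seminorm of $f$, and finally using $r_k \geq \varepsilon$ to extract a global $\varepsilon$-dependent factor, produces the global bounds.

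The main obstacle is purely bookkeeping: correctly identifying the worst-case element (namely $T_m = [\varepsilon, \varepsilon+h]$, where the weights are smallest relative to $r^{n+1}$) and tracking how powers of $r$ combine. A careful accounting shows that the element-wise ratios give global estimates of the form $\frac{h^2}{\varepsilon^2} \int_\varepsilon^1 |f''|^2 r^{n+1}\,dr$ and $\frac{h^4}{\varepsilon^4} \int_\varepsilon^1 |f''|^2 r^{n+1}\,dr$, which for $n \geq 3$ and $\varepsilon \leq 1$ are stronger than the powers of $\varepsilon$ prescribed by $\mathcal{E}_h(f)$. The claimed inequalities \cref{first_e,sec_e} then follow with a constant $C$ independent of $f$, $h$, and $\varepsilon$.
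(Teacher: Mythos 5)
Your proposal is correct and shares the paper's overall skeleton: expand the square, apply Cauchy--Schwarz to the cross term, control $\Pi_h f - f$ with one-dimensional interpolation estimates on $(\varepsilon,1)$, and use the radial Hardy inequality (valid since $f$ extends by zero and $f(1)=0$) to reduce the weighted $L^2(r^{n-3}\,dr)$ norm of $f$ to a weighted gradient norm. The differences lie in the technical core. The paper applies the unweighted interpolation estimate \cref{gradiente_pol} on all of $(\varepsilon,1)$ at once and then converts weights with the crude global bounds $\varepsilon\le r\le 1$, which costs the factors $\varepsilon^{-(n+1)}$ and $\varepsilon^{-(n+1)/2}$ and lands exactly on the stated $\mathcal{E}_h(f)$. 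You instead work element by element, using $r_{k+1}\le 2r_k$ (a consequence of $h\le\varepsilon\le r_k$) to treat the weight as constant on each $T_k$; this yields the sharper factors $h/\varepsilon$ and $h^2/\varepsilon^2$ in place of $h/\varepsilon^{(n+1)/2}$ and $h^2/\varepsilon^{n+1}$, and these are indeed bounded by the corresponding terms of $\mathcal{E}_h(f)$ since $\varepsilon\le 1$ and $n\ge 3$, as you note. A second genuine difference is in \cref{sec_e}: the paper controls $\int_\varepsilon^1\abs{\Pi_h f-f}^2 r^{n-3}\,dr$ by applying the Hardy inequality to the error $\Pi_h f-f$ itself (which also vanishes at $r=1$) and then reusing the gradient interpolation estimate, whereas you bound it directly via the element-wise $L^2$ estimate $\norma{\Pi_h f-f}_{L^2(T_k)}\lesssim h^2\norma{f''}_{L^2(T_k)}$, reserving Hardy for $f$ alone; this is why your quadratic remainder carries $h^4/\varepsilon^4$ rather than $h^2/\varepsilon^{n+1}$, which again is stronger. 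Both routes are valid; yours produces a slightly sharper intermediate error functional, while the paper's global argument requires less bookkeeping and reproduces the exact form of $\mathcal{E}_h(f)$ in the statement.
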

\begin{proof}
   We first prove \cref{first_e}. Since $\varepsilon$ is an interpolation node, we have $f(r) = \Pi_h f(r) = 0$ for every $r \in [0,\varepsilon]$. We can therefore estimate
    \begin{align}
        \int_0^1\abs{\left(\Pi_h f\right)'}^2 r^{n-1} dr 
        =&\int_\varepsilon^1\abs{f'+\left(\Pi_h f-f\right)'}^2 r^{n-1}dr
        \nonumber \\
        \leq& \int_\varepsilon^1 \abs{f'}^2r^{n-1}dr +  \int_\varepsilon^1\abs{\left(\Pi_h f-f\right)'}^2 r^{n-1}dr 
        \nonumber \\
        &+2\left(\int_\varepsilon^1 \abs{f'}^2r^{n-1}dr \right)^{\frac{1}{2}}\left(\int_\varepsilon^1\abs{\left(\Pi_h f-f\right)'}^2 r^{n-1}dr\right)^{\frac{1}{2}}.
        \label{app_f}
    \end{align}
    We now use the bounds $\varepsilon\leq r\leq 1$ and the interpolation inequality \cref{gradiente_pol} on the interval $(\varepsilon,1)$, which is valid for every $f\in H^2(\varepsilon,1)$, to further estimate \cref{app_f} as 
    \begin{equation*}
        \begin{aligned}
                \int_0^1\abs{\left(\Pi_h f\right)'}^2 r^{n-1} dr
                \leq&  \int_\varepsilon^1 \abs{f'}^2 r^{n-1}dr+c^2h^2\int_\varepsilon^1\abs{f''}^2dr\\
                &
                +2ch \left(\int_\varepsilon^1 \abs{f'}^2r^{n-1}dr \right)^{\frac{1}{2}}\left(\int_\varepsilon^1\abs{f''}^2 dr\right)^{\frac{1}{2}}\\
                \leq& \int_\varepsilon^1 \abs{f'}^2 r^{n-1}dr+c^2h^2\varepsilon^{-(n+1)}\int_\varepsilon^1\abs{f''}^2 r^{n+1}dr\\
                &+2ch \varepsilon^{-\frac{n+1}{2}}\left(\int_\varepsilon^1 \abs{f'}^2r^{n-1}dr \right)^{\frac{1}{2}}\left(\int_\varepsilon^1\abs{f''}^2 r^{n+1} dr\right)^{\frac{1}{2}}.
        \end{aligned}
    \end{equation*}
    This implies \cref{first_e} for any constant $C \geq C_1 :=\max\{2c, c^2\}$. This constant can be taken to be independent of $\varepsilon$ because the constant $c$ in the interpolation inequality \cref{gradiente_pol} is an increasing function of the diameter of the integration domain \cite{raviart}, and can therefore be replaced by a larger constant (also denoted by $c$) independently of $\varepsilon$.

    Next, we derive inequality \cref{sec_e} using similar arguments. We start from the estimate
    \begin{align}
        \int_0^1 \abs{\Pi_h f}^2 r^{n-3} dr
        \geq& 
        \int_0^1 \abs{f}^2r^{n-3}dr-\int_0^1\abs{\Pi_h f-f}^2 r^{n-3} dr
        \nonumber \\
        &-2\left(\int_0^1 \abs{f}^2r^{n-3}dr \right)^{\frac{1}{2}}\left(\int_0^1\abs{\Pi_h f-f}^2 r^{n-3} dr\right)^{\frac{1}{2}}.
        \label{app_2}
    \end{align}
    Since $f(1)=0$ by assumption, then both $f$ and $\Pi_h f-f$ satisfy the Hardy inequality with optimal constant $S^n$. Using this inequality, and the fact that $\Pi_h f=f$ in $(0,\varepsilon)$ by assumption, we can further estimate \cref{app_2} as
    \begin{equation*}
        \begin{aligned}
             \int_0^1 \abs{\Pi_h f}^2 r^{n-3} dr\geq& 
             \int_0^1 \abs{f}^2r^{n-3}dr
             -S^n\int_\varepsilon^1\abs{\left(\Pi_h f-f\right)'}^2 r^{n-1} dr\\
             &-2S^n\left(\int_0^1 \abs{f'}^2r^{n-1}dr \right)^{\frac{1}{2}}\left(\int_\varepsilon^1\abs{\left(\Pi_h f-f\right)'}^2 r^{n-1} dr\right)^{\frac{1}{2}}.
        \end{aligned}
    \end{equation*}
    Applying to this inequality the same estimates used on \cref{app_f} yields \cref{sec_e} for any constant $C \geq C_2 :=\max\{2S^nc,S^n c^2\}$, which may be again chosen to be independent of $\varepsilon$. Estimates \cref{first_e,sec_e} clearly hold simultaneously for $C= \max\{C_1,C_2\}$.
\end{proof}

Finally, we establish the following refinement of the upper bound on $S_h^n$ from \cref{main-3d-spherical}, which is asymptotically equivalent to the latter when $h \ll 1$.

\begin{theorem}
    For every $n\geq 3$ and all sufficiently small mesh sizes $h$,
    \begin{equation}\label{e:ub-3d-statement}
        S_h^n \leq \dfrac{(n-2)^2}{4} + 
            \left( \dfrac{ (n+1) \pi  }{  2\abs{\log h} -6 \log\abs{\log h} } \right)^2
            + o\left( \frac{1}{ \abs{ \log h }^2 }\right).
    \end{equation}
\end{theorem}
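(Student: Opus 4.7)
The plan is to mimic the upper-bound strategy from Section~\ref{s:1} and Section~\ref{ss:ub-3d-general}, using the function $v_\varepsilon$ already introduced in \cref{e:v-interp-n}. Specifically, I would set $v_h = \Pi_h v_\varepsilon \in V_h$ for a parameter $\varepsilon=\varepsilon(h)$ chosen so that $\varepsilon=mh$ is an interpolation node; this guarantees that $v_\varepsilon$ coincides with its interpolant on $[0,\varepsilon]$, so the interpolation error only accumulates on $(\varepsilon,1)$. Since $v_\varepsilon(1)=0$ and $v_\varepsilon\equiv 0$ on $[0,\varepsilon]$, the hypotheses of \cref{lem:estimates} are met, giving
\begin{equation*}
    \frac{\displaystyle\int_0^1 r^{n-1}\abs{(\Pi_h v_\varepsilon)'}^2\,dr}{\displaystyle\int_0^1 r^{n-3}\abs{\Pi_h v_\varepsilon}^2\,dr}
    \leq
    \frac{\displaystyle\int_0^1 r^{n-1}\abs{v_\varepsilon'}^2\,dr + C\mathcal{E}_h(v_\varepsilon)}{\displaystyle\int_0^1 r^{n-3}\abs{v_\varepsilon}^2\,dr - C\mathcal{E}_h(v_\varepsilon)}.
\end{equation*}

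Next, I would invoke \cref{lem:norm-v-eps-3d} to evaluate all relevant norms. The unperturbed ratio equals $(n-2)^2/4+\pi^2/\abs{\log\varepsilon}^2$ exactly, while the error term satisfies
\begin{equation*}
    \mathcal{E}_h(v_\varepsilon) \lesssim \left(\frac{h}{\varepsilon^{(n+1)/2}} + \frac{h^2}{\varepsilon^{n+1}}\right)\abs{\log\varepsilon}.
\end{equation*}
Both the numerator and denominator of the ratio are of order $\abs{\log\varepsilon}\sim\abs{\log h}$, so the interpolation correction to the ratio is of order $\mathcal{E}_h(v_\varepsilon)/\abs{\log h}$, and we need this to be $o(\abs{\log h}^{-2})$.

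The key choice is $\varepsilon = mh$ with $m=\lfloor h^{-(n-1)/(n+1)}\abs{\log h}^{6/(n+1)}\rfloor$, so that $\varepsilon\sim h^{2/(n+1)}\abs{\log h}^{6/(n+1)}$. This yields $\varepsilon^{(n+1)/2}\sim h\abs{\log h}^3$ and $\varepsilon^{n+1}\sim h^2\abs{\log h}^6$, hence $\mathcal{E}_h(v_\varepsilon) \lesssim \abs{\log h}^{-2}$ and the correction to the ratio is $O(\abs{\log h}^{-3}) = o(\abs{\log h}^{-2})$. Moreover,
\begin{equation*}
    \abs{\log\varepsilon} = \frac{2}{n+1}\abs{\log h} - \frac{6}{n+1}\log\abs{\log h} + O(1)
    = \frac{1}{n+1}\bigl(2\abs{\log h} - 6\log\abs{\log h}\bigr) + O(1),
\end{equation*}
so that $\pi^2/\abs{\log\varepsilon}^2 = \bigl((n+1)\pi/(2\abs{\log h}-6\log\abs{\log h})\bigr)^2 + o(\abs{\log h}^{-2})$. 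Combining gives \cref{e:ub-3d-statement}.

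The main technical point (rather than an obstacle) is that the exponent $2/(n+1)$ for $\varepsilon$ in $h$ is dictated precisely by the scaling of $\mathcal{E}_h$: choosing $\varepsilon$ any smaller would make $h/\varepsilon^{(n+1)/2}$ fail to decay, while choosing $\varepsilon$ larger would unnecessarily inflate the leading term $\pi^2/\abs{\log\varepsilon}^2$. The polylogarithmic factor $\abs{\log h}^{6/(n+1)}$ is then the sharp correction needed to push the error from $O(\abs{\log h}^{-2})$ to strictly $o(\abs{\log h}^{-2})$ after the appropriate refinement. This polylog is precisely what produces the $-6\log\abs{\log h}$ correction in the denominator of the prefactor, giving the asymptotic form matching the statement.
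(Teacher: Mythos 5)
Your proposal is correct and follows essentially the same route as the paper: interpolate the test function $v_\varepsilon$ from \cref{e:v-interp-n} at an interpolation node $\varepsilon=mh$, apply \cref{lem:norm-v-eps-3d,lem:estimates}, and choose $m=\lfloor h^{(1-n)/(n+1)}\abs{\log h}^{6/(n+1)}\rfloor$ so that $\varepsilon\sim h^{2/(n+1)}\abs{\log h}^{6/(n+1)}$, which is exactly the paper's choice. Your accounting of the error terms and of the expansion of $\abs{\log\varepsilon}$ matches the paper's computation, so the argument goes through as stated.
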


\begin{proof}
    Let $\varepsilon = mh$ be an interpolation node.
    We can then follow exactly the same steps as in \cref{ss:ub-1d}, except that we let $v_\varepsilon$ be defined as in \cref{e:v-interp-n} and replace the results in \cref{lem:norm-v-eps,lem:1d-estimates} with those in \cref{lem:norm-v-eps-3d,lem:estimates}. We obtain
    \begin{equation*}
        S^n_h\leq \dfrac{\dfrac{(n-2)^2}{4}+\dfrac{\pi^2}{\abs{\log \varepsilon}^2}+C\left(h \varepsilon^{-\frac{n+1}{2}}+h^2\varepsilon^{-(n+1)}\right)}{1-C\left(h \varepsilon^{-\frac{n+1}{2}}+h^2\varepsilon^{-(n+1)}\right)}. 
    \end{equation*}
    for some constant $C$ independent of both $h$ and $\varepsilon$. 
    We now recall that $\varepsilon = mh$ and set 
    $$m= \left\lfloor h^{\frac{1-n}{n+1}} \, \abs{ \log h }^{\frac{6}{n+1}} \right\rfloor.$$ This gives $\varepsilon\sim h^{2/(n+1)} \abs{ \log h }^{6/(n+1)}$ and, in particular, $\varepsilon \leq h^{2/(n+1)} \abs{ \log h }^{6/(n+1)}$. We can thus find another constant, also denoted by $C$ and independent of $h$, such that
    \begin{equation*}
        S^n_h
        \leq 
        \dfrac{
            \dfrac{(n-2)^2}{4} + 
            \left( \dfrac{ (n+1) \pi  }{  2\abs{\log h} -6 \log\abs{\log h} } \right)^2
            +C\left( \dfrac{1}{\abs{ \log h}^3} + \dfrac{1}{\abs{ \log h}^6}\right)
        }
        {1-C \left( \dfrac{1}{\abs{ \log h}^3} + \dfrac{1}{\abs{ \log h}^6}\right) }.
    \end{equation*}
    This inequality implies \cref{e:ub-3d-statement} when the mesh size $h$ is sufficiently small.
\end{proof}
     
\section*{Acknowledgements and Declarations}
F. Della Pietra was supported by the MIUR-PRIN 2017 grant ``Qualitative and quantitative aspects of nonlinear PDEs'', and FRA Project (Compagnia di San Paolo and Universit\`a degli studi di Napoli Federico II) \verb|000022--ALTRI_CDA_75_2021_FRA_PASSARELLI|.

F. Della Pietra, A.L. Masiello and G. Paoli were supported by Gruppo Nazionale per l’Analisi Matematica, la Probabilità e le loro Applicazioni (GNAMPA) of Istituto Nazionale di Alta Matematica (INdAM).

L. I. Ignat was supported by project PN-III-P1-1.1-TE-2021-1539 of Romanian Ministry of Research, Innovation and Digitization, CNCS-UEFISCDI,  within PNCDI III.

G. Paoli was supported by the Alexander von Humboldt Foundation through an Alexander von Humboldt research fellowship. 

E. Zuazua has been funded by the Alexander von Humboldt-Professorship program, the ModConFlex Marie Curie Action, HORIZON-MSCA-2021-DN-01, the COST Action MAT-DYN-NET, the Transregion 154 Project “Mathematical Modelling, Simulation and Optimization Using the Example of Gas Networks” of the DFG, grants PID2020-112617GB-C22 and TED2021-131390B-I00 of MINECO (Spain), and by the Madrid Goverment – UAM Agreement for the Excellence of the University Research Staff in the context of the V PRICIT (Regional Programme of Research and Technological Innovation).

\bibliographystyle{plain}
\bibliography{biblio}

\end{document}